\documentclass[12pt]{article}
\usepackage{graphicx} % Required for inserting images
\usepackage{color}
\usepackage{enumerate}
\usepackage{latexsym}
\usepackage[T1]{fontenc}
\usepackage{float}
\usepackage{amssymb,amscd,amsmath,amsfonts,amsthm}
\usepackage{multicol}
\usepackage{tabularx,environ,array}
\usepackage{tikz}
\usepackage{comment}
\usepackage{hyperref}
\usepackage{boxedminipage}
\usepgflibrary{shapes.geometric}

\hypersetup{colorlinks=true, linkcolor=blue, citecolor=blue, urlcolor=blue}

\newcommand{\chimu}{\chi_{\mu}}
\newcommand{\chimut}{\chi_{\mu_t}}
\newcommand{\chimuo}{\chi_{\mu_o}}
\newcommand{\chimud}{\chi_{\mu_d}}

\newcommand{\mud}{\mu_{\mathrm{d}}}
\newcommand{\mut}{\mu_{\mathrm{t}}}
\newcommand{\muo}{\mu_{\mathrm{o}}}

\newcommand{\sa}{\mathrm{sa}}
\newcommand{\cp}{\,\square\,}

\newtheorem{theorem}{Theorem}

\newtheorem{corollary}[theorem]{Corollary}

\newtheorem{lemma}[theorem]{Lemma}

\newtheorem{proposition}[theorem]{Proposition}
\newtheorem{remark}[theorem]{Remark}

\begin{document}

\title{A variety of the mutual-visibility coloring problem for graphs}

\author{Saneesh Babu$^{a,}$\thanks{\texttt{saneeshbabu@cusat.ac.in}}
\and Marko Jakovac$^{b,c,}$\thanks{\texttt{marko.jakovac@um.si}}
\and Dorota Kuziak$^{d,}$\thanks{\texttt{dorota.kuziak@uca.es}}
\and Aparna Lakshmanan S.$^{a,}$\thanks{\texttt{aparnals@cusat.ac.in}}
\and Ismael G. Yero$^{e,}$\thanks{\texttt{ismael.gonzalez@uca.es}}
}
\maketitle

\begin{center}
\vspace*{-0.7cm}
$^a$ Department of Mathematics, Cochin University of Science and Technology, India\\

\medskip
$^b$ Faculty of Natural Sciences and Mathematics, University of Maribor, Slovenia\\

\medskip
$^c$ Institute of Mathematics, Physics and Mechanics, Ljubljana, Slovenia\\

\medskip
$^d$ Departamento de Estad\'istica e IO, Algeciras Campus, Universidad de C\'adiz, Spain \\

\medskip
$^e$ Departamento de Matem\'{a}ticas, Universidad de C\'adiz, Algeciras Campus, Spain 
\end{center}

\begin{abstract}
This paper explores variations of vertex-coloring problems defined on graph visibility properties. It introduces and studies the dual, outer, and total mutual-visibility chromatic numbers, which partition the vertex set of a graph into color classes that preserve specific mutual-visibility conditions called dual, outer or total. The work provides structural conditions under which these chromatic parameters are finite or infinite, and establishes that deciding whether a graph admits a dual, outer, or total mutual-visibility coloring using a given number of colors is NP-complete, even when restricted to two colors.  Exact formulas and tight bounds for these chromatic parameters are established across several fundamental graph classes. 

For block graphs, complete characterizations are provided for the outer and dual mutual-visibility chromatic numbers based on structural invariants such as cut vertices and specific forbidden subgraph structures. On Hamming graphs, the dual and total mutual-visibility chromatic numbers are shown to equal the smaller dimension of the factors, while the outer mutual-visibility chromatic number is proven to equal the star arboricity of a corresponding complete bipartite graph. Finally, the paper examines strong grid graphs, determining exact values for their total, outer, and dual mutual-visibility chromatic numbers. These results demonstrate how the parameter behaviors range from finite constants to infinity depending on the grid dimensions.
\end{abstract}

\textbf{Math.\ Subj.\ Class. 2020:} 05C12, 05C69

\textbf{Keywords}: (dual, outer, total) mutual-visibility coloring, mutual-visibility problems, mutual-visibility number.    

%%%%%%%%%%%%%%%%%%%%%%%%%%%%%%%%%%%%%%%%%%%%%%%%%%%

\section{Introduction} 

The mutual-visibility problem in graphs was introduced just three years ago in \cite{DiS} as a theoretical framework for some computer science problem related to the navigation of robots in networks avoiding collisions. Surprisingly, the topic has attracted the attention of several researchers after the seminal article appeared. A proof of this is observed since the reader can already find more than 60 research works in the last three years which are focused on providing new contributions on visibility related problems. 

A remarkable fact that might have contributed to this interest relates to the close relationships that have appeared between the mutual-visibility problem and other classical combinatorial topics like for instance the Zarankiewicz problem, some Tur\'an-type problems in hypergraphs, or some problems from Ramsey theory. To see this, we suggest for some examples the recent works \cite{BPSY,BKT,CDK,CDKY2,KKVY}.

A typical research direction that has emerged is the one considering variations of the original problem. That is, some researchers have modified, generalized or combined the mutual-visibility property so that new research lines have appeared. Among them, we remark \cite{CDDH}, where a variety of mutual-visibility properties was introduced and studied. The formal concepts are next defined.

From now on, we consider $G = (V(G), E(G))$ as a connected, non-trivial, undirected simple graph of order $n(G)=|V(G)|$ and size $m(G)=|E(G)|$. Given a set of vertices $M\subseteq V(G)$ and two vertices $x,y\in V(G)$, it is said that $x,y$ are $M$-\textit{visible}, if there is a shortest path $P$ between $x$ and $y$ (shortly, an $x,y$-path $P$) such that $V(P)\cap M\subseteq \{x,y\}$. Roughly speaking, this means that all internal vertices of $P$ are not included in $M$. Having this notion in mind, the following graph structures are known.
\begin{itemize}
    \item $M$ is a \textit{mutual-visibility set} if each two $x,y\in M$ are $M$-visible (from \cite{DiS}).
    \item $M$ is a \textit{dual mutual-visibility set} if each two $x,y\in M$ and each two $x',y'\notin M$ are $M$-visible (from \cite{CDDH}).
    \item $M$ is an \textit{outer mutual-visibility set} if each two $x,y\in M$ and each two $x',y'\in V(G)$ such that $x'\in M$ and $y'\notin M$ are $M$-visible (from \cite{CDDH}).
    \item $M$ is a \textit{total mutual-visibility set} if each two $x,y\in V(G)$ are $M$-visible (from \cite{CDKY}).
\end{itemize}
With these types of sets in mind, the cardinality of a (resp.) largest possible set from the above ones are known as: the \textit{mutual-visibility number} $\mu(G)$, the \textit{dual mutual-visibility number} $\mud(G)$, the \textit{outer mutual-visibility number} $\muo(G)$ and the \textit{total mutual-visibility number} $\mut(G)$, resp.
%A given set $S\subseteq V(G)$ is called a $\pi$-\textit{set} of $G$ with $\pi\in \{\mu,\mut,\mud,\muo\}$ (resp.), if $|S|$ equals $\mu(G)$, $\mut(G)$, $\mud(G)$ or $\mud(G)$ (resp.). 

Another closely related problem to the ones above is the general position problem and its variety. The general position problem is a stronger concept than the mutual-visibility, although it indeed was introduced much earlier than the mutual-visibility problem. For more information on this topic, we suggest the survey \cite{CKT-survey}, as well as, the articles \cite{RKLT} and \cite{TK-25} where the variety was described.

On the other hand, as it usually happens, the mutual-visibility and general position concepts have been related to classical graph coloring problems. For instance, in \cite{KKVY}, an investigation considering coloring the vertices of a graph in such a way that each color class represents a mutual-visibility set was presented. This investigation was right away followed by the work \cite{Axenovich-pub} (or \cite{Axenovich} for a previous version), which studied such coloring in hypercubes. A similar study that considered coloring the vertices of a graph into general position sets was presented in \cite{CDiSHTT}. In addition, a related coloring in which an independence property in the mutual-visibility sets of the color classes of such coloring was developed in \cite{BPSY}. Another variation which insists that the distance between every pair of vertices in the mutual-visibility set can be at most $k$, termed $k$-distance mutual-visibility set, and the corresponding coloring is introduced and studied in \cite{BBSL}. In concordance with these works, and with the notions regarding the variety of mutual-visibility problems already known, we are hence focused on describing and further studying a variety of mutual-visibility coloring problems for graphs. We next formalize our statements.

Assuming from now on that $[n]=\{1,\dots,n\}$ for any positive integer $n\ge1$, we consider a mapping $c:V(G)\rightarrow [k]$ for a graph $G$. Such mapping $c$ is: 
\begin{itemize}
    \item a {\em mutual-visibility $k$-coloring} of $G$ if every color class $c^{-1}(i)$ is a mutual-visibility set of $G$ for every $i\in [k]$ (we remark that this notion is already known from \cite{KKVY} and we include it here for completeness in the variety); or
    \item a {\em dual mutual-visibility $k$-coloring} of $G$ if every color class $c^{-1}(i)$ is a dual mutual-visibility set of $G$ for every $i\in [k]$; or
    \item an {\em outer mutual-visibility $k$-coloring} of $G$ if every color class $c^{-1}(i)$ is an outer mutual-visibility set of $G$ for every $i\in [k]$; or
    \item a {\em total mutual-visibility $k$-coloring} of $G$ if every color class $c^{-1}(i)$ is a total mutual-visibility set of $G$ for every $i\in [k]$.
\end{itemize}
Notice that such mapping $c$ generates a partition $P=\{P_1\dots,P_k\}$ of the vertex set of $G$, which are the color classes according to $c$. From now, we shall refer to such $P$ as (resp. a dual, an outer, or a total) a mutual-visibility $k$-coloring of $G$ and we shall also say that $P$ is a $\pi$-coloring of $G$ with $\pi\in \{\chimu,\chimud,\chimuo,\chimut\}$, in connection with the parameters that are defined next. The smallest integer $k$ such that the graph $G$ admits (resp. a dual, an outer, or a total) a mutual-visibility $k$-coloring is the (resp. {\em dual, outer}, or {\em total$)$ mutual-visibility chromatic number} of $G$, denoted by $\chimu(G)$ (resp. $\chimud(G)$, $\chimuo(G)$, or $\chimut(G)$). 

\subsection{Plan of the article}

Section \ref{sec:prelim} establishes general bounds for the parameters of our work. For example, we present a characterization of the graphs $G$ such that $\chi_{\mu_t}(G) = \infty$, as those graphs containing a vertex which is the center of a convex $P_3$. With respect to a similar characterization for the graphs $G$ with $\chi_{\mu_d}(G) = \infty$, we have only shown that if $G$ contains a vertex that is the center in a convex $P_5$ or in a convex $K_{1,3}$, then $\chi_{\mu_d}(G) = \infty$. However, a complete characterization is still missing. Also, we introduce mutually maximally distant classes to characterize extreme cases for outer mutual-visibility, showing that $\chi_{\mu_o}(G) = 1$ is satisfied only for complete graphs; and that $\chi_{\mu_o}(G) = n-1$ holds only for paths. Section \ref{sec:complex} considers the complexity of the decision problems regarding total, dual, and outer mutual-visibility colorings, which are proved to be NP-complete, even when restricted to two colors ($k=2$) via a polynomial-time reduction from NAE3-SAT. Section \ref{sec:block} shows that $\chi_{\mu_t}(G) = \infty$ follows for any block graph with at least two blocks, and that $\chi_{\mu_o}(G) = \vert{}\Omega\vert{} + 1$ for any block graph, where $\Omega$ is the set of cut vertices. In addition, in this section, we find the exact value of $\chi_{\mu_d}(G)$ as 1, $\vert{}\Omega\vert{}$, $\vert{}\Omega\vert{}+1$ or infinity, for various classes of block graphs $G$, where $\Omega$ denotes the set of cut vertices of $G$. In Section \ref{sec:Hamming} we determine that $\chi_{\mu_t}(K_m \square K_n) = \chi_{\mu_d}(K_m \square K_n) = \min\{m,n\}$, and show an equivalence between the outer parameter and another graph parameter called star arboricity, that is, $\chi_{\mu_o}(K_m \square K_n) = sa(K_{m,n})$. From this relationship, we derive exact values for squared cases, including $\chi_{\mu_o}(K_n \square K_n) = \lceil n/2 \rceil + 2$ for $n \ge 7$. In Section \ref{sec:grids}, we study the strong grids $P_r \boxtimes P_t$ ($r \ge t \ge 2$), showing that $\chi_{\mu_t}(P_r \boxtimes P_t)$ is 1 for $t=2, r=2$; 2 for $t=2, r \ge 3$; and $\infty$ for $t \ge 3$. We also establish that $\chi_{\mu_o}(P_r \boxtimes P_t) = t - 1$ for $r \ge t \ge 3$. In addition, we compute $\chi_{\mu_d}(P_r \boxtimes P_t)$, proving it equals 1 for $t=2, r=2$; 2 for $t=3, r=3$; $t$ for $2 \le t \le 4$; and $\infty$ for $t \ge 5$. We end our contributions in Section \ref{sec:conclu}, where we remark a collection of open questions that might be worth considering as a continuation of this investigation.

\section{Preliminaries}\label{sec:prelim}

Based on the definitions of all the concepts of our work, one can observe that the following inequalities are satisfied for any graph $G$.
\begin{equation}
\label{eq:chimu-d-t}
\chimu(G)\le\chimud(G)\le \chimut(G).
\end{equation}
\begin{equation}
\label{eq:chimu-o-t}
\chimu(G)\le\chimuo(G)\le \chimut(G).
\end{equation}

On the other hand, it can be readily observed that since each color class of any (dual, outer, or total) mutual-visibility $k$-coloring is (a dual, an outer, or a total) mutual-visibility set, for any graph $G$, it holds that 
\begin{equation}\label{eq:chimus-lower-bounds}
    \chimu(G)\ge \left\lceil\frac{n(G)}{\mu(G)}\right\rceil,\quad
    \chimud(G)\ge \left\lceil\frac{n(G)}{\mud(G)}\right\rceil,\quad
    \chimuo(G)\ge \left\lceil\frac{n(G)}{\muo(G)}\right\rceil,\quad
    \chimut(G)\ge \left\lceil\frac{n(G)}{\mut(G)}\right\rceil.
\end{equation}

It is already known from \cite{CDKY} that there might be graphs $G$ for which $\mud(G)=0$ or $\mut(G)=0$ (those graphs having no (dual or total) mutual-visibility sets). In this sense, and being consequent with the inequalities \eqref{eq:chimus-lower-bounds}, it makes sense to take the agreements that there might be graphs $G$ for which $\chimud(G)=\infty$ or $\chimut(G)=\infty$.

The class of graphs $G$ satisfying that $\mut(G)=0$ was already characterized in \cite{TK} as follows. On the other hand, some partial results on the graphs $G$ satisfying that $\mud(G)=0$ were presented in \cite{CDKY}.

\begin{theorem}{\em \cite{TK}}
\label{th:mut-0-charact}
Let $G$ be a graph. Then $\mut(G)=0$ if and only if every vertex of $G$ is the center of a convex $P_3$.
\end{theorem}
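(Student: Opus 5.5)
The plan is to establish a local criterion for when a single vertex may belong to a total mutual-visibility set, and then read off the theorem from it. Throughout, a convex $P_3$ centered at $v$ means a path $u v w$ with $u,w\in N(v)$, $uw\notin E(G)$, such that $v$ is the unique common neighbour of $u$ and $w$. Equivalently, $d(u,w)=2$ and every shortest $u,w$-path passes through $v$. I will prove the following claim.

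\textbf{Claim.} A vertex $v$ lies in some total mutual-visibility set if and only if $\{v\}$ is itself a total mutual-visibility set, if and only if $v$ is not the center of a convex $P_3$.

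The theorem follows from the claim at once. If every vertex is the center of a convex $P_3$, then no vertex can belong to a total mutual-visibility set, so $\mut(G)=0$. Conversely, if some vertex $v$ is not such a center, then $\{v\}$ is a total mutual-visibility set and $\mut(G)\ge 1$.

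For the easy direction of the claim, suppose $u v w$ is a convex $P_3$ and $v\in M$. Every shortest $u,w$-path has $v$ as an internal vertex, so $u$ and $w$ are not $M$-visible. Hence $M$ is not a total mutual-visibility set. This also shows that membership in a total mutual-visibility set is inherited by subsets, since removing vertices from $M$ only makes visibility easier.

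The substantive step is to show that if $v$ is not the center of any convex $P_3$, then $M=\{v\}$ is a total mutual-visibility set. That is, for all $x,y\in V(G)$ there must be a shortest $x,y$-path avoiding $v$ as an internal vertex. I will argue by contradiction using a minimal counterexample. Suppose some pair $x,y$ has the property that all shortest $x,y$-paths pass through $v$ internally, and choose such a pair with $d(x,y)$ minimum; clearly $d(x,y)\ge 2$. Take one shortest path and let $u$ and $w$ be the neighbours of $v$ on it. Since $v$ is not the center of a convex $P_3$, either $uw\in E(G)$, which contradicts the path being shortest, or $u$ and $w$ have another common neighbour $v'\ne v$. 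In the latter case I replace $v$ by $v'$ to obtain another $x,y$-path of the same length. This path is still shortest, and it avoids $v$ unless $v$ occurs elsewhere on it, which is impossible because the path is shortest and $v$ was already removed from its unique position. So $x$ and $y$ are $\{v\}$-visible, a contradiction. In fact this direct rerouting argument makes the minimality assumption unnecessary.

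I expect the main obstacle to be only bookkeeping: checking that the definition of convex $P_3$ used in \cite{TK} matches ``$v$ is the unique common neighbour of two nonadjacent neighbours''. With that definition in place, the local rerouting argument above finishes the proof.
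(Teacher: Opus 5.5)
Your proof is correct and takes essentially the same route as the paper. The paper only cites this result from \cite{TK}, but its proof of Theorem~\ref{th_chimut-inf} uses your argument: a convex $P_3$ centred at $v$ excludes $v$ from every total mutual-visibility set, and if $\{v\}$ is not such a set, the two neighbours of $v$ on a blocking geodesic must form a convex $P_3$ with $v$, since otherwise one could reroute through another common neighbour; you additionally spell out the rerouting step (why the new walk is still a geodesic avoiding $v$), which the paper leaves as ``readily observed''.
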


\begin{theorem}{\em \cite{CDKY}}
\label{th:mud-0-partial}
Let $G$ be a graph.  If every two adjacent vertices of $G$ are the center of a convex $P_4$, then $\mud(G) = 0$.
\end{theorem}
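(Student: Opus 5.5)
The statement just before this point is Theorem~\ref{th:mud-0-partial}, cited from \cite{CDKY}. I take "adjacent vertices $u,v$ are the center of a convex $P_4$" to mean: there are vertices $a,b$ with $a\sim u$, $v\sim b$ such that $a u v b$ is a convex (isometric, induced) path. In particular $d(a,b)=3$, and every shortest $a,b$-path is forced through $u$ and $v$. The goal is to show that no nonempty set $M\subseteq V(G)$ is a dual mutual-visibility set. So I will fix an arbitrary nonempty $M$ and exhibit a pair of vertices, either both in $M$ or both outside $M$, that are not $M$-visible.

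The first step handles the easy case $M=V(G)$. The hypothesis supplies a convex $P_4$, so there are vertices at distance $3$. Every shortest path between them has internal vertices, and all of these lie in $M$. Hence those two vertices of $M$ are not $M$-visible.

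Next suppose $\emptyset\neq M\neq V(G)$. Since $G$ is connected, there is an edge $uv$ with $u\in M$ and $v\notin M$. Take the convex $P_4$ $a u v b$ given by the hypothesis. Convexity means every shortest $a,b$-path has interior exactly $\{u,v\}$, and $d(a,b)=3$. The argument now splits on the membership of $a$ and $b$ in $M$.
\begin{itemize}
\item If $a,b\in M$, they are not $M$-visible, because every shortest $a,b$-path uses $u\in M$ internally.
\item If $a,b\notin M$, they are also not $M$-visible, because every shortest $a,b$-path still uses $u\in M$ internally; the dual condition covers pairs outside $M$ as well.
\end{itemize}

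The mixed case is the main obstacle, since the pair $a,b$ itself is not constrained by the definition. Consider first $a\in M$, $b\notin M$. The pair $a,b$ is unconstrained. Instead I examine $a$ and $v$ at distance $2$: their common neighbours include $u$, but possibly others, so this pair is not immediately blocked. My plan is to re-choose the edge. I replace $uv$ by an edge of the convex path that has both endpoints in $M$, or both outside $M$, and apply the hypothesis again. Each application gives a new convex $P_4$ centred on that edge, and iterating should eventually produce a blocked pair. It is cleaner to reorganise the argument by choosing the edge according to its type:
\begin{itemize}
\item \emph{An edge $uv$ with $u,v\in M$.} Its convex $P_4$ $aubv$-type path $a u v b$ blocks $a,b$ whenever $a,b$ are both in $M$ or both outside $M$. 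If $a\in M$ and $b\notin M$, I look at the edge $vb$, which has one endpoint in $M$ and one outside.
\item \emph{An edge with one endpoint in $M$ and one outside.} This is handled by the case analysis above, again with the mixed subcase left over.
\end{itemize}
To make the iteration terminate, I would choose $M$-visibility witnesses extremally, for instance a pair in $M$ at maximum distance, or work along a shortest path between two vertices of $M$. In any case, the residual case-chasing is where I expect the real work to lie. Once every configuration leads to a blocked pair of like-membership vertices, $M$ fails to be a dual mutual-visibility set, and therefore $\mud(G)=0$.
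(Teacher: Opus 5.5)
Your proposal has a genuine gap: the mixed subcase (an edge $uv$ with $u\in M$, $v\notin M$, and the ends $a,b$ on different sides of $M$) is never closed. The iteration or extremal choice that is supposed to finish it is only announced, never carried out, so as written nothing rules out a nonempty dual mutual-visibility set. The root cause is that you use only the distance-$3$ pair $a,b$ of the convex $P_4$ $auvb$, and you explicitly doubt that $u$ is the only common neighbour of $a$ and $v$. That doubt is unfounded. Convexity means every geodesic between two vertices of the path stays inside the path, which is strictly stronger than being isometric and induced. Hence $d(a,v)=2$ with $N_G(a)\cap N_G(v)=\{u\}$, and likewise $N_G(u)\cap N_G(b)=\{v\}$. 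Even the weaker property you do state excludes extra common neighbours: a second common neighbour $w$ of $a$ and $v$ would give a second shortest $a,b$-path $awvb$ avoiding $u$.

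With these distance-$2$ geodesics, the proof takes two short steps and needs no iteration.
\emph{First step: $M$ contains no edge.} Suppose $u,v\in M$ and $auvb$ is a convex $P_4$. If $a\in M$, then $a,v\in M$ are not $M$-visible, since the only $a,v$-geodesic passes through $u$; so $a\notin M$. Symmetrically $b\notin M$, since otherwise $u,b\in M$ are blocked by $v$. But then $a,b\notin M$ are not $M$-visible, because their unique geodesic has interior $\{u,v\}\subseteq M$.
\emph{Second step: no edge $uv$ has $u\in M$ and $v\notin M$.} Take the convex $P_4$ $auvb$ with $a$ adjacent to $u$. If $a\notin M$, the pair $a,v\notin M$ is blocked by $u$. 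Hence $a\in M$, so $au$ is an edge inside $M$, contradicting the first step.
Since $G$ is connected and nontrivial, every vertex of a nonempty $M$ has a neighbour, so one of the two steps applies; this also covers $M=V(G)$. Note also that in your treatment of an edge inside $M$, the subcase $a\in M$, $b\notin M$ that you push to the edge $vb$ is already contradictory, because $a,v\in M$ are blocked by $u$.
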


Some other examples of graphs satisfying $\mud(G) = 0$ and not satisfying the statement of Theorem \ref{th:mud-0-partial} were given in  \cite{CDKY} as well. Notice that the fact that $\mud(G)=0$ or $\mut(G)=0$ respectively implies that $\chimud(G)=\infty$ or $\chimut(G)=\infty$. However, the opposite is not true, although it is somehow related. We now center our attention into studying the graphs $G$ for which $\chimud(G)=\infty$ or $\chimut(G)=\infty$. 

\begin{theorem}
\label{th_chimut-inf}
Let $G$ be a graph. Then $\chimut(G)=\infty$ if and only if there exists at least a vertex in $V(G)$ which is the center of a convex $P_3$.
\end{theorem}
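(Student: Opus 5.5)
The plan is to prove the two directions separately. Recall that a convex $P_3$ with center $v$ is a path $u v w$ with $u,w$ non-adjacent. Convexity then means that $v$ is the unique common neighbour of $u$ and $w$, so $v$ lies on every shortest $u,w$-path, since $d(u,w)=2$.

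\emph{Necessity (existence of such a vertex gives $\chimut(G)=\infty$).} Suppose $v$ is the center of a convex $P_3$, say $u v w$. In any partition $P=\{P_1,\dots,P_k\}$ of $V(G)$, the vertex $v$ belongs to some class $P_i$. Every shortest $u,w$-path has $v$ as an internal vertex, so $u$ and $w$ are not $P_i$-visible. Hence $P_i$ is not a total mutual-visibility set. Therefore no total mutual-visibility $k$-coloring exists for any $k$, and by the convention adopted after \eqref{eq:chimus-lower-bounds} we get $\chimut(G)=\infty$.

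\emph{Sufficiency (no such vertex gives $\chimut(G)<\infty$).} Suppose no vertex is the center of a convex $P_3$. We claim the partition into singletons $\{\{x\}: x\in V(G)\}$ is a total mutual-visibility $n(G)$-coloring, which gives $\chimut(G)\le n(G)<\infty$. So it suffices to show that every singleton $M=\{x\}$ is a total mutual-visibility set. Take any $a,b\in V(G)$. If $x\in\{a,b\}$, or $x$ lies on no shortest $a,b$-path, there is nothing to prove. Otherwise $x$ is an internal vertex of some shortest $a,b$-path $Q$, and we must find another shortest $a,b$-path avoiding $x$.

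\emph{Key step (rerouting around $x$).} Let $a'$ and $b'$ be the predecessor and successor of $x$ on $Q$. Then $a'$ and $b'$ are non-adjacent, since $Q$ is a shortest path and $d(a',b')=2$. The path $a' x b'$ is therefore a $P_3$ centered at $x$. By hypothesis it is not convex, so $a'$ and $b'$ have a common neighbour $z\neq x$. Replacing $a' x b'$ by $a' z b'$ in $Q$ gives an $a,b$-walk of the same length, hence a shortest $a,b$-path. This path avoids $x$: $z\neq x$, and $x$ appears only once on $Q$. So $a,b$ are $\{x\}$-visible.

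There is no real obstacle. The only point requiring care is the equivalence between "convex $P_3$" and "the ends have a unique common neighbour", which holds because the endpoints are at distance $2$ and every shortest path between them has length $2$. One could also remark that this is the local version of Theorem~\ref{th:mut-0-charact}, but the direct argument above is self-contained.
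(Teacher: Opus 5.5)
Your proof is correct and follows essentially the same route as the paper. One direction uses the pair $u,w$ that $v$ blocks in every color class. The other uses the singleton partition, rerouting around a center $x$ whose $P_3$ is not convex. You make explicit two points the paper leaves implicit: that $a'$ and $b'$ are non-adjacent, and that the rerouted walk is a shortest path avoiding $x$.
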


\begin{proof}
$(\Leftarrow)$ Clearly, if $v$ is the center of a convex $P_3$ in $G$, then $v$ cannot belong to any total mutual-visibility set of $G$. Thus, we cannot form a total mutual-visibility $k$-coloring in $G$, which means $\chimut(G)=\infty$.

$(\Rightarrow)$ Assume next that $\chimut(G)=\infty$. Namely, any partition $\Pi'=\{P'_1,\dots,P'_r\}$ of $V(G)$ does not induce a total mutual-visibility $r$-coloring of $G$. In particular, the partition $\Pi=\{P_1,\dots,P_{n(G)}\}$ of $V(G)$ such that $|P_i|=1$ for each $i\in [n(G)]$ satisfies such property. Let $P_i=\{v\}$ be such that $P_i$ is not a total mutual-visibility set. Notice first that $v$ must have degree at least $2$, for otherwise, it is indeed a total mutual-visibility set. This means that there are two vertices $x,y\in V(G)$ such that $x,y$ are not $P_i$-visible (note that neither $x$ nor $y$ can be $v$), i.e., $v$ is an internal vertex of every shortest $x,y$-path. Let $Q$ be one such path, and let $x',y'\in N_G(v)$ such that $x'$ belongs to the shortest $v,x$-path and $y'$ belongs to the shortest $v,y$-path. Clearly, it might happen $x=x'$ or $y=y'$. One now readily observes that $x'vy'$ must be a convex $P_3$ (for otherwise $x,y$ will be $P_i$-visible), and $v$ is the center of such a convex $P_3$.
\end{proof}

In the case of total mutual-visibility coloring of a graph $G$, $\chimut(G)=\infty$ is equivalent to having a vertex $v \in V(G)$ which does not belong to any total mutual-visibility set, since the fact that a set $S$ is a total mutual-visibility set implies that every subset of $S$ is also a total mutual-visibility set. However, the case of dual mutual-visibility coloring is different, as each subset of a dual mutual-visibility set is not necessarily a dual-mutual visibility set as well. So, there are graphs for which every vertex belongs to some dual-mutual visibility set, but still the vertex set cannot be partitioned into dual-mutual visibility sets. For instance, one may consider the cycle $C_5$. This makes the characterization of graphs for which $\chimud(G)=\infty$ more challenging. In the following theorem, we have a sufficient condition for a graph $G$ to have $\chimud(G)=\infty$, but the complete characterization of such graphs is still open.

\begin{theorem}
\label{th_chimud-inf}
%Let $G$ be a graph. Then $\chimud(G)=\infty$ if and only if there exist at least a vertex $v\in V(G)$ which is the central vertex in a convex $P_5$ or in a convex  $K_{1,3}$.
Let $G$ be a graph. If there exist a vertex $v\in V(G)$ which is the central vertex in a convex $P_5$ or in a convex  $K_{1,3}$, then $\chimud(G)=\infty$.
\end{theorem}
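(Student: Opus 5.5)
The plan is to fix an arbitrary partition $\Pi=\{P_1,\dots,P_k\}$ of $V(G)$ and show that the class $M\in\Pi$ containing $v$ cannot be a dual mutual-visibility set. Then no dual mutual-visibility $k$-coloring exists for any $k$, so $\chimud(G)=\infty$. The single observation driving everything is the following. Suppose $x,y$ are two vertices whose shortest $x,y$-paths all pass through $v$ as an internal vertex. Then $x,y$ are not $M$-visible, so $M$ being dual forces exactly one of $x,y$ to lie in $M$: if both lie in $M$, or both lie outside $M$, the definition is violated. Convexity of the given subgraph is what guarantees that the relevant shortest paths are unique and lie inside that subgraph.

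For the convex $K_{1,3}$ with center $v$ and leaves $x,y,z$, I would argue as follows. By convexity the leaves are pairwise non-adjacent, and each pair is at distance $2$ with $v$ as its unique common neighbour. So every pair among $x,y,z$ must be separated by $M$. Among three vertices, two lie on the same side of $M$, by pigeonhole. That pair is not $M$-visible, which is a contradiction.

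For the convex $P_5$, say $a\,b\,v\,c\,d$ with $v$ the central vertex, convexity gives that the unique shortest paths between its vertices are the subpaths of this $P_5$. In particular, $v$ is internal to the unique shortest paths for the pairs $(b,c)$, $(a,c)$ and $(b,d)$. The separation rule then gives three facts:
\begin{itemize}
\item $b$ and $c$ lie on opposite sides of $M$;
\item $a$ and $c$ lie on opposite sides;
\item $b$ and $d$ lie on opposite sides.
\end{itemize}
Hence $a,b$ lie on the same side and $c,d$ lie on the other side. There are two cases.
\begin{itemize}
\item If $a,b\in M$, then $a,v\in M$, but their unique shortest path $a\,b\,v$ has its internal vertex $b\in M$. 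This contradicts $M$ being dual.
\item If $a,b\notin M$, then $c,d\in M$, and the pair $v,d$ fails in the same way via the path $v\,c\,d$.
\end{itemize}

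I expect no real obstacle. The only care needed is to justify, from the definition of a convex subgraph, that the listed shortest paths are unique and lie within the $P_5$ or $K_{1,3}$. The case analysis itself is short.
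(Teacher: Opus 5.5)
Your proof is correct and follows essentially the same route as the paper. Both show that $v$ lies in no dual mutual-visibility set, using the fact that any pair whose unique geodesic passes through $v$ must be split by $M$: a pigeonhole argument handles $K_{1,3}$ and a short case analysis handles $P_5$. The only difference is cosmetic: you collect the three separation constraints up front, whereas the paper fixes $v_2\in M$ w.l.o.g. and branches on $v_1$.
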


\begin{proof} 
First, let $v$ be the central vertex of a convex $K_{1,3}$ with at least three neighbors not pairwise adjacent between them, and suppose $v\in S$, where $S$ is a dual mutual-visibility set of $G$. Hence, at most one of its three neighbors belongs to $S$, for otherwise the mutual-visibility property of vertices in $S$ is violated. Thus, at least two neighbors of $v$ are not in $S$, but then, the  mutual-visibility property of vertices not in $S$ is also violated. Both contradictions lead to confirm that $v$ does not belong to any dual mutual-visibility set of $G$, and so, it holds $\chimud(G)=\infty$ in this situation.

Now, let $v_1v_2v_3v_4v_5$ be a convex $P_5$. Suppose, for a contradiction, that $v_3$ belongs to a dual mutual-visibility set $M$. By the dual mutual-visibility property of $M$, $v_2$ and $v_4$ neither both belong to $M$ nor both lie outside $M$. Without loss of generality, assume $v_2 \in M$. Consider the vertex $v_1$.  
\begin{itemize}
    \item If $v_1 \in M$, then $v_1,v_3\in M$ are not $M$-visible.
    \item If $v_1\notin M$, then $v_1,v_4\notin M$ are not $M$- visible.
\end{itemize}
A symmetric argument holds if $v_4\in M$. In all cases, we reach a contradiction. Therefore, $v_3$ cannot belong to any dual mutual-visibility set of $G$, and again $\chimud(G)=\infty$.
\end{proof}

Notice that there are graphs $G$ for which every vertex $v\in V(G)$ is neither the central vertex in a convex $P_5$ nor in a convex  $K_{1,3}$,  however, they still have vertices which are not part of any dual mutual-visibility set. An example of this is for instance the wheel graph on $7$ vertices $C_6 \vee K_1$, for which the central vertex (from $K_1$) cannot be present in any dual mutual-visibility set. 

\medskip
In contrast to the total and dual mutual-visibility numbers (that can be equal to $0$), the outer mutual-visibility number of a non-trivial graph $G$ is always larger than or equal to $2$, since any two diametral vertices of $G$ form an outer mutual-visibility set. Also, any single vertex of $G$ forms an outer mutual-visibility set of $G$. This, together with \eqref{eq:chimus-lower-bounds} and the fact that $\muo(G)\le n$, allow to conclude that for any graph $G$ of order $n$,
\begin{equation}\label{eq-triv-bounds-outer}
1\le \chimuo(G)\le n-1.   
\end{equation}

We next focus on characterizing the classes of graphs attaining the bounds above.
To this end, we need some extra terminology and notation. A vertex $u\in V(G)$ is \textit{maximally distant} from $v\in V(G)$, if $d_G(v,x)\le d_G(v,u)$ for every $x\in N_G(u)$. Two vertices $u,v\in V(G)$ are \textit{mutually maximally distant} (MMD) if $u$ is maximally distant from $v$ and $v$ is maximally distant from $u$. Notice that every graph contains at least a pair of mutually maximally distant vertices (for instance any two diametral vertices are mutually maximally distant). The \textit{boundary} $\partial(G)$ of $G$ represents the set of vertices $u$ for which there exists $v\in V(G)$ such that $u,v$ are mutually maximally distant. Note that we can always find a partition $\{Q_1,\dots,Q_r\}$ of $\partial(G)$ of a smallest possible cardinality with $|Q_j|\ge 1$ for every $j\in [r]$, and, if there is a $Q_i\in \partial(G)$ such that $|Q_i|\ge 2$, then any two vertices $x,y\in Q_i$ are mutually maximally distant in $G$. We note first that in every possible partition of $\partial(G)$ there is at least one set $Q_\ell$ with $|Q_\ell|\ge 2$. Moreover, we observe that such a partition can be frequently made in such a way that $|Q_i|\ge 2$ for every $i\in [r]$. However, this is not always the case. For instance, if we consider an odd cycle, say $C_5: v_1v_2v_3v_4v_5v_1$, then $\partial(C_5) = V(C_5)$, and we cannot partition $V(C_5)$ into mutually maximally distant sets each of which is of cardinality at least 2. Here the MMD sets are $\{v_1,v_3\}, \{v_1,v_4\}, \{v_2,v_4\}, \{v_2,v_5\}$ and $ \{v_3,v_5\}$. In such situation, $\partial(C_5)$ can be partitioned into MMD sets like $\{v_1,v_3\}, \{v_2,v_4\}, \{v_5\}$, where one of the sets is a singleton. From now on, we shall call such sets $Q_i$ as the mutually maximally distant classes of $G$. For some examples, notice that $\partial(K_n)=V(K_n)$ and there is only one mutually maximally distant class, which is formed by the whole set of vertices of $K_n$. Also, if we consider a cycle $C_{2n}$, then $\partial(C_{2n})=V(C_{2n})$ and its $n$ mutually maximally distant classes are formed by pairs of diametral vertices in $C_{2n}$. In a tree $T$, we have that $\partial(T)$ is formed by the set of its leaves and they form its unique mutually maximally distant class. 

The notion of MMD vertices was used in several articles concerning computing the strong metric dimension of graphs. To this end, the concept of strong resolving graph was needed. That is, given a graph $G$, the strong resolving graph $G_{SR}$ of $G$ has vertex set $\partial(G)$ and two vertices $x,y$ are adjacent in $G_{SR}$ if $x,y$ are MMD in $G$. To see more information about such structure, we suggest the survey article \cite{SRG}. This survey considers (among other things) the realization and the characterization of graphs as strong resolving graphs. For example, the following result is proved.

\begin{lemma}{\em \cite{SRG}}
\label{lem:SRG}
Let $G$ be a graph of order $n$. Then,
$G_{SR}\cong K_{1,r}$ for some $r\ge 1$ if and only if $G\cong P_n$ and $r=1$.
\end{lemma}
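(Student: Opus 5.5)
Both directions come down to understanding the MMD structure of paths and of graphs whose strong resolving graph is a star.

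For $(\Leftarrow)$, take $G\cong P_n$ with vertices $v_1v_2\cdots v_n$. The only vertices that can be maximally distant from anything are the leaves $v_1$ and $v_n$. Indeed, an internal vertex $v_i$ has neighbours $v_{i-1}$ and $v_{i+1}$, and from any $v_j$ one of these is strictly farther away than $v_i$. Moreover, $v_1$ and $v_n$ are mutually maximally distant. Hence $\partial(P_n)=\{v_1,v_n\}$ and $G_{SR}\cong K_2=K_{1,1}$, which is the required star with $r=1$.

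For $(\Rightarrow)$, suppose $G_{SR}\cong K_{1,r}$ with centre $u$ and leaves $w_1,\dots,w_r$. Then every MMD pair contains $u$, and no two $w_i$ are MMD.
\begin{enumerate}
\item First I show $r=1$. Let $D$ be the diameter and let $a,b$ be diametral. Diametral vertices are MMD, so one of them, say $a$, equals $u$. Then every diametral pair contains $u$, so $u$ is the unique vertex of eccentricity $D$ that is paired with everything diametral.
\item Next I use the standard fact that every vertex $x$ has an MMD partner. Starting from $x$, walk to a vertex $y$ maximally distant from $x$. Then walk from $y$ to a farthest-type vertex, increasing distances until the process stabilises; since $G$ is finite, this terminates with an MMD pair. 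Applying this to $w_1$ shows that its partner is $u$.
\item Suppose $r\ge 2$, with $w_1,w_2$ both MMD with $u$. Consider a vertex $z$ maximally distant from $w_1$, chosen so that it lies in a direction away from $u$. Iterating the maximally-distant walk from $w_1$ while avoiding $u$ should produce an MMD pair not containing $u$, which is a contradiction. To carry this out, I would take $z$ to maximise $d(w_1,z)$ over $\partial(G)\setminus\{u\}$ and show that $w_1$ and $z$ are MMD.
\end{enumerate}
This step is the main obstacle, since maximally distant does not simply mean farthest. If it is too delicate, I would instead use an eccentricity argument. Every vertex of maximum eccentricity has its eccentric vertices at distance $D$, and these are MMD with it. Since every MMD pair contains $u$, there can be only one boundary vertex other than $u$.

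Finally, with $r=1$ we have $\partial(G)=\{u,w\}$. I then need to show that $G$ is a path. If some vertex had degree at least $3$, or if $G$ contained a cycle, I would exhibit a third boundary vertex. For a tree, the leaves form $\partial(T)$, so a tree that is not a path has at least three leaves. For a graph with a cycle, I would argue that every vertex lies on a shortest $u,w$-path: any vertex $x$ off a geodesic yields, by the walk argument above, an extremal vertex outside $\{u,w\}$. The graph is then the union of $u,w$-geodesics, and two distinct geodesics would give a vertex maximally distant from something other than $u,w$. Hence $G\cong P_n$. Alternatively, since this is cited from the survey \cite{SRG}, it suffices to note the key fact that $|\partial(G)|=2$ forces $G$ to be a path.
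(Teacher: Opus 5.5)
Your $(\Leftarrow)$ direction is correct, but $(\Rightarrow)$ has genuine gaps. (The paper gives no proof of its own here; it imports the lemma from \cite{SRG}.)

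First, the ``standard fact'' in step 2 is false. By your own computation, an internal vertex of $P_n$ is maximally distant from no vertex, so it has no MMD partner. The walk you describe does end at an MMD pair, but that pair need have no relation to the starting vertex. So it cannot later be invoked ``by the walk argument above'' to produce boundary vertices tied to a vertex lying off a $u,w$-geodesic.

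Second, $r=1$ is never established. Step 3 is only a plan, and the eccentricity fallback does not follow. Knowing that every diametral pair contains $u$ does not bound the number of vertices at distance $D$ from $u$. Nor does it say anything about a leaf $w_i$ of $G_{SR}$ with $d(u,w_i)<D$.

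Third, the passage from $\partial(G)=\{u,w\}$ to $G\cong P_n$ is only sketched outside trees. Falling back on ``$|\partial(G)|=2$ forces a path'' is circular, since that is essentially the statement being proved.

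The missing idea is to extend \emph{pairs} of vertices to MMD pairs, and to aim directly at the path structure rather than proving $r=1$ first.
\begin{enumerate}
\item Given $x\neq y$, pick $a$ with $d(a,y)=d(a,x)+d(x,y)$ and $d(a,y)$ maximal. Then pick $b$ with $d(a,b)=d(a,y)+d(y,b)$ and $d(a,b)$ maximal.
\item Maximality of $d(a,b)$ makes $b$ maximally distant from $a$.
\item Suppose a neighbour $a'$ of $a$ had $d(b,a')=d(b,a)+1$. The triangle inequality through $y$ forces $d(y,a')=d(y,a)+1$. The triangle inequality through $x$ then forces $d(a',y)=d(a',x)+d(x,y)>d(a,y)$, contradicting the choice of $a$. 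So $a$ is maximally distant from $b$.
\item Hence $a,b$ are MMD, and some $a,b$-geodesic passes through $x$ and then $y$.
\item Every MMD pair contains the centre $u$, so $u\in\{a,b\}$. In either case $d(u,x)\neq d(u,y)$.
\end{enumerate}
So distances from $u$ are pairwise distinct, and every distance level from $u$ is a single vertex. This makes $G$ a path with end $u$, and then $r=1$ follows from your $(\Leftarrow)$ computation. In strong metric dimension language: $\{u\}$ is a vertex cover of $G_{SR}$, so $u$ alone strongly resolves $G$, and only paths have metric dimension one.
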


\begin{lemma}
\label{lem:MMD-class}
Let $G$ be any graph. Then each set of pairwise MMD vertices forms an outer mutual-visibility set of $G$. In particular, each mutually maximally distant class of $G$ of cardinality at least two is an outer mutual-visibility set of $G$.
\end{lemma}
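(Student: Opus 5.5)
The plan is a direct check of the definition of an outer mutual-visibility set. Such a set $S$ must make every pair $x,y$ with $x\in S$ (and $y\in S$ or $y\notin S$) $S$-visible. Both kinds of pairs share the feature that at least one endpoint lies in $S$. So I will prove a single stronger statement: for every $x\in S$ and every $y\in V(G)$, \emph{every} shortest $x,y$-path has no internal vertex in $S$.

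The key observation is the following. Suppose $u$ is maximally distant from $v$. Then $u$ cannot be an internal vertex of any shortest path starting at $v$. To see this, let $P$ be a shortest $v,w$-path with $u$ internal, and let $z$ be the successor of $u$ on $P$. Then $z\in N_G(u)$ and $d_G(v,z)=d_G(v,u)+1>d_G(v,u)$. This contradicts the definition of $u$ being maximally distant from $v$.

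Now let $S$ be a set of pairwise MMD vertices, and take $x\in S$ and any $y\in V(G)$ with $y\neq x$. Let $P$ be any shortest $x,y$-path, and suppose some internal vertex $z$ of $P$ lies in $S$. Then $z\neq x$, so $z$ and $x$ are MMD by hypothesis; in particular $z$ is maximally distant from $x$. This contradicts the observation, since $P$ starts at $x$. Hence $V(P)\cap S\subseteq\{x,y\}$. This covers both the pairs inside $S$ and the mixed pairs, so $S$ is an outer mutual-visibility set. The case $|S|=1$ is trivial, because the only vertex of $S$ is then an endpoint of every path considered. The ``in particular'' part follows immediately, since a mutually maximally distant class of cardinality at least two consists, by definition, of pairwise MMD vertices.

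There is no real obstacle here. The only point needing care is to use the direction ``$z$ maximally distant from $x$'', where $x$ is the endpoint in $S$ from which the path starts, rather than the reverse direction. Mutual maximal distance provides both directions anyway, and that is what makes the argument symmetric in the two endpoints when both lie in $S$.
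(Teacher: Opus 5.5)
Your proof is correct and is essentially the paper's argument. In both, an internal vertex $z\in S$ on a shortest path from $x\in S$ has a successor $z'$ with $d_G(x,z')=d_G(x,z)+1$, so $z$ is not maximally distant from $x$, contradicting that $x,z$ are MMD. The only difference is presentational: you show directly that every such geodesic avoids $S$ internally, whereas the paper argues by contradiction from one non-visible pair, which implicitly gives the same stronger statement.
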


\begin{proof}
Let $Q$ be a set of pairwise MMD vertices of $G$ with at least two vertices $u,v\in Q$. Let $x,y\in V(G)$ such that $|Q\cap \{x,y\}|\ge 1$. Assume w.l.o.g. that $x\in Q$ and suppose that $x,y$ are not $Q$-visible. Hence, every shortest $x,y$-path in $G$ contains at least one internal vertex of $Q$. In this sense, let $P$ be a shortest $x,y$-path and let $z\in Q\cap V(P)$ (notice that $z$ can neither be $x$ nor $y$). Thus, there exists $z'\in N_G(z)\cap V(P)$ such that $d_G(x,z')=d_G(x,z)+1$ (note that such $z'$ can be $y$). However, this means that $x,z$ are not mutually maximally distant in $G$, which is a contradiction since both $x,z\in Q$. Therefore, $x,y$ are $Q$-visible, and so, $Q$ is an outer mutual-visibility set of $G$ as desired. If in particular, $Q$ is a mutually maximally distant class, then a similar argument also applies.
\end{proof}

Notice that the opposite statement in the lemma above is not true. For example, in $C_4$, any two adjacent vertices form an outer mutual-visibility set, but they are not MMD. The lemma above, together with the fact that every vertex of a graph $G$ is an outer mutual-visibility set of $G$, lead to the following corollary.

\begin{corollary}
\label{cor:outer-MMD}
Let $G$ be any graph of order $n$ and let $\{Q_1,\dots,Q_r\}$ be the mutually maximally distant classes of $G$ of cardinality at least two. Then, $\muo(G)\ge \max\{|Q_i|\,:\,i\in [r]\}$ and $\chimuo(G)\le r+n-\sum_{i=1}^r|Q_i|$.
\end{corollary}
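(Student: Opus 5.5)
The corollary follows almost directly from Lemma~\ref{lem:MMD-class}, together with the observation that any single vertex is an outer mutual-visibility set. The plan has two parts, one for each inequality.

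\emph{The bound on $\muo(G)$.} By Lemma~\ref{lem:MMD-class}, every mutually maximally distant class $Q_i$ with $|Q_i|\ge 2$ is an outer mutual-visibility set of $G$. Hence $\muo(G)\ge |Q_i|$ for every $i\in[r]$, and taking the maximum over $i$ gives the first claim. If one wants to be careful about the case where no class of size at least two exists (so $r=0$), note that the paper already observed that every partition of $\partial(G)$ contains some class of size at least two. So $r\ge 1$ and the maximum is well defined; in any case $\muo(G)\ge 2$ was noted earlier.

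\emph{The bound on $\chimuo(G)$.} I will build an explicit outer mutual-visibility coloring. Since the mutually maximally distant classes partition $\partial(G)$, the sets $Q_1,\dots,Q_r$ are pairwise disjoint. Let $W=V(G)\setminus\bigcup_{i=1}^r Q_i$; this consists of the vertices outside $\partial(G)$ together with those in singleton classes. Then $|W|=n-\sum_{i=1}^r|Q_i|$. Consider the partition of $V(G)$ into the $r$ sets $Q_1,\dots,Q_r$ and the $|W|$ singletons $\{w\}$ with $w\in W$.
\begin{itemize}
\item Each $Q_i$ is an outer mutual-visibility set by Lemma~\ref{lem:MMD-class}.
\item Each singleton $\{w\}$ is an outer mutual-visibility set. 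Indeed, a shortest path from $w$ to any $y$ has $w$ as an endpoint, so it has no internal vertex in $\{w\}$.
\end{itemize}
Therefore this partition is an outer mutual-visibility coloring with $r+n-\sum_{i=1}^r|Q_i|$ colors, which yields the second claim.

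There is no real obstacle here. The only point needing care is that the classes are disjoint, which justifies the count of $W$, and this holds because they come from a partition of $\partial(G)$.
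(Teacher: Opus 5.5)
Your proof is correct and matches the paper's argument: the paper obtains this corollary directly from Lemma~\ref{lem:MMD-class} together with the fact that every single vertex is an outer mutual-visibility set, which is exactly your partition of $V(G)$ into $Q_1,\dots,Q_r$ and singletons. Your additional remarks make explicit what the paper leaves implicit, namely that the classes are disjoint (so the singleton count is $n-\sum_{i=1}^r|Q_i|$) and that $r\ge 1$.
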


The next results give the classes of graphs satisfying the equality in the bounds of \eqref{eq-triv-bounds-outer}.

\begin{proposition}
Let $G$ be a graph of order $n$. Then,
\begin{enumerate}
    \item[{\em (i)}] $\chimuo(G)=1$ if and only if $G$ is a complete graph $K_n$, and
    \item[{\em (ii)}] $\chimuo(G)=n-1$ if and only if $G$ is the path graph $P_n$.
\end{enumerate}
\end{proposition}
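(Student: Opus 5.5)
For (i), we argue directly from the definition. If $G\cong K_n$, any two vertices are adjacent, so every shortest path between them has no internal vertices. Hence $V(G)$ itself is an outer mutual-visibility set, and $\chimuo(G)=1$. Conversely, if $\chimuo(G)=1$, then $M=V(G)$ is an outer mutual-visibility set. Suppose $G$ is not complete. Since $G$ is connected, there are two vertices $x,y$ with $d_G(x,y)=2$. Every shortest $x,y$-path then has an internal vertex, and that vertex lies in $M$, a contradiction. So $G$ is complete.

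For (ii), the direction $G\cong P_n\Rightarrow \chimuo(G)=n-1$ needs only a lower bound, since $\chimuo(G)\le n-1$ holds by \eqref{eq-triv-bounds-outer}. The key claim is that in $P_n$ any outer mutual-visibility set $M$ containing an internal vertex $v$ of the path is the singleton $\{v\}$. Indeed, if $x\in M$ with $x\neq v$, choose $y$ to be the neighbor of $v$ on the side opposite to $x$. The unique $x,y$-path contains $v$ as an internal vertex, and since $x\in M$ this violates the outer property. Hence every color class with at least two vertices consists only of leaves. Since $P_n$ has just two leaves, at most one class has size two, and all other classes are singletons. This forces at least $n-1$ classes. (For $n=2$ we have $P_2=K_2$, and the value $1=n-1$ agrees with (i).)

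For the converse, assume $\chimuo(G)=n-1$. First, $G$ has no outer mutual-visibility set of size at least $3$: such a set plus singletons would give an outer coloring with at most $n-2$ colors. Second, $G$ has no two disjoint outer mutual-visibility sets of size $2$, for the same reason. By Lemma \ref{lem:MMD-class}, every set of pairwise MMD vertices is an outer mutual-visibility set. In terms of the strong resolving graph $G_{SR}$, this means $G_{SR}$ contains no triangle and no matching with two edges. Also, $G_{SR}$ has at least one edge, since diametral vertices are MMD. It has no isolated vertices, because every vertex of $\partial(G)$ has an MMD partner by definition.

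A graph without isolated vertices and with matching number $1$ is either a triangle or a star. The triangle is excluded, so $G_{SR}\cong K_{1,r}$ for some $r\ge1$. Lemma \ref{lem:SRG} then gives $G\cong P_n$.

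The step needing the most care is translating ``no outer coloring with $n-2$ colors'' into these two structural restrictions on $G_{SR}$. It relies on the fact that any partition consisting of one or two larger outer sets plus singletons is itself an outer coloring, which holds because singletons are always outer mutual-visibility sets.
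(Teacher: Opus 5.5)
Your proof is correct and follows the paper's strategy: singleton classes together with Lemma~\ref{lem:MMD-class} rule out certain MMD configurations, which forces $G_{SR}\cong K_{1,r}$, and Lemma~\ref{lem:SRG} then gives $G\cong P_n$. Your way of stating the constraints, ``$G_{SR}$ has no triangle and no two disjoint edges, hence it is a star,'' is tidier than the paper's case analysis around the fixed $2$-element class $A=\{u,v\}$, and it also sidesteps the step where the paper applies Lemma~\ref{lem:MMD-class} to $A\cup\{z\}$, which needs $u,v$ to be MMD and the paper has not established that at that point.
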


\begin{proof}
(i) Clearly, if $G$ is $K_n$, then the whole vertex set of $K_n$ is an outer mutual-visibility set, and so, $\chimuo(G)=1$. On the other hand, if $G$ is not complete, then there are at least two vertices which are not adjacent and they cannot be together in the unique set of an outer mutual-visibility $1$-coloring of $G$. Therefore, $\chimuo(G)\ge 2$, and we are done.

(ii) If $G$ is $P_n=v_1v_2\dots v_n$, then clearly $\{\{v_1,v_n\},\{v_2\},\dots,\{v_{n-1}\}\}$ defines an outer mutual-visibility $(n-1)$-coloring of $P_n$. Also, if $P=\{P_1,\dots, P_k\}$ is an outer mutual-visibility $k$-coloring of $P_n$, then there can be only one class with two vertices (the leaves of $P_n$) and any other class must have only one vertex. Notice that any set of at least two vertices in which at least one of them is not a leaf vertex of $P_n$ is not an outer mutual-visibility set of $P_n$. Thus, $\chimuo(P_n)\ge n-1$, which leads to the required equality.

On the other hand, assume that $\chimuo(G)=n-1$. Since $G$ has $n$ vertices, by the pigeon hole principle, there must be exactly one color class of cardinality $2$, say $A=\{u,v\}$, and every other color class ($n-2$ of them) is of cardinality $1$. Suppose there are two vertices $x,y\notin A$, such that they are MMD in $G$. By Lemma \ref{lem:MMD-class}, $\{x,y\}$ forms an outer mutual-visibility set of $G$, which in turn, leads to $\chimuo(G)\le n-2$, since $\{x,y\}$ together with $A$, and the remaining $n-4$ singleton sets of $G$, will form an outer mutual-visibility $(n-2)$-coloring of $G$, and this is a contradiction. Thus, each pair of vertices in $V(G)\setminus A$ are not MMD in $G$. Consider now any vertex $z\notin A$. If $z$ is MMD with both vertices $u,v\in A$, then $A\cup \{z\}$ forms  an outer mutual-visibility set of $G$, by Lemma \ref{lem:MMD-class}. This also leads to $\chimuo(G)\le n-2$, which is not possible. Thus, the vertex $z\notin A$ might be MMD with at most one vertex from $A$, say $u$. If there is another vertex $z'\notin A\cup \{z\}$ such that $z',v$ are MMD, then we obtain again a contradiction (by using Lemma \ref{lem:MMD-class} once more), since then $\{z,u\}$, $\{z',v\}$ and the remaining $n-4$ vertices (as singleton classes) form an outer mutual-visibility $(n-2)$-coloring of $G$. Consequently, we obtain that, if there are some vertices not in $A$, which are MMD with one vertex from $A$, then all such vertices are MMD with exactly the same vertex from $A$, say $u$, and the vertex $v\in A$ has no other vertex, other than $u$, which is MMD with it. 

In conclusion, the arguments above lead to observe that $u$ is MMD with $v$, and possibly with some other vertices not in $A$, and that there are no other vertices in $G$ that are MMD between them. Having this in mind, we observe that the strong resolving graph $G_{SR}$ (previously defined) is isomorphic to the graph $K_{1,r}$ for some $r\ge 1$ (a star with $r$ leaves with $u$ as the center of the star). Now, from Lemma \ref{lem:SRG}, it holds that $G_{SR}\cong K_{1,r}$ for some $r\ge 1$ if and only if $G\cong P_n$ and $r=1$. Consequently, we have finally obtained that $G$ must be a path.
%Assume now that $\{Q_1,\dots,Q_r\}$ with $r\ge 1$, form the MMD classes of $G$ of cardinality at least $2$. If $r\ge 2$, then by Lemma \ref{lem:MMD-class}, we deduce that $\chimuo(G)\le n-2$ (at least two MMD classes of cardinality at least $2$ and at most $n-4$ singleton classes), a contradiction again. Thus, it holds $r=1$. Moreover, since there must be always at least one pair of MMD vertices in every graph, it must precisely hold that $Q_1=A$. Thus, we deduce that $\partial(G)$ contains only one class of mutually maximally distant vertices, that must have cardinality two, and the remaining vertices of $G$ do not belong to $\partial(G)$. This means that $G$ must be a path.
\end{proof}

We end this section with a result concerning the relationship between any of the mutual-visibility chromatic parameters of a graph in our variety and that of any convex subgraph of such graph. Such a result will be implicitly used in several arguments of our work, although not explicitly mentioned in general.

\begin{remark}\label{lem:convex}
Let $H$ be a convex subgraph of a graph $G$, and let $\chi_\pi$ denote any one of $\chimu$, $\chimuo$, $\chimud$ or $\chimut$. Then $\chi_\pi(G) \geq \chi_\pi(H)$
\end{remark}

\begin{proof}
Let $c: V(G) \to [k]$ where the set of color classes $c^{-1}(i)$ form a $\pi$-coloring of $G$. Assume $H$ is a convex subgraph of $G$.

Consider the restriction of $c$ onto $H$, i.e., $c|_H : V(H) \to [k']$ for some $k'\le k$. The color classes of $c|_H$ are $c^{-1}(i) \cap V(H)$ for $i\in [k']$, and moreover, notice that $c^{-1}(i)$ might be empty for some values of $i$. Such empty classes are removed from $c|_H$. Since $H$ is convex, every shortest path in $G$ between vertices of $H$ is entirely contained in $H$. Thus, visibility properties in $G$, by shortest paths are preserved when restricting to $H$. Consequently, the set of color classes of $c|_H$ form a $\pi$-coloring of $H$ of cardinality $k'\le k$.
%Moreover, by Lemma~\ref{lem:convex}, subsets of $\pi$-sets remain $\pi$-sets. Hence, each color class of $c|_H$ is a $\pi$-set in $H$, and $c|_H$ is a proper $\pi$-coloring of $H$ using at most $k$ colors. It follows that $\chi_\pi(H) \le k = \chi_\pi(G)$.
\end{proof}

\section{Computational Complexity}\label{sec:complex}

As usual in combinatorial investigations, one line that focuses the attention of several researchers is centered into the computational properties of their related decision or optimization problems. Regarding our topic, the following results are known to be NP-complete.
\begin{itemize}
    \item \cite{CDiSHTT}: Deciding whether there is a vertex coloring of a graph whose color classes form general position sets.
    \item \cite{BDiSL}: Deciding whether there is a vertex coloring of a graph whose color classes form mutual-visibility sets.
    \item \cite{BPSY}: Deciding whether there is a vertex coloring of a graph whose color classes form independent mutual-visibility sets.
\end{itemize}

In this sense, since we deal with the remaining coloring related parameters of the variety of mutual-visibility problems, in this section, we determine the computational complexity of the decision problem related to computing the (total, dual and outer) mutual-visibility chromatic number, i.e., the next problems.

\begin{center}
\begin{boxedminipage}{0.85\textwidth}
{\sc (TOTAL, DUAL, OUTER) MV-coloring} problem: \\
{\sc Instance} $(G,k)$: A graph $G$, and a positive integer $k\leq |V(G)|$. \\
{\sc Question}: Does $G$ contain a (total, dual, outer) mutual-visibility $k$-coloring?
\end{boxedminipage}
\end{center}

\bigskip
We prove the NP-completeness of the problem above, by using a polynomial-time reduction from the {\em not-all-equal 3-satisfiability} ({\sc NAE3-SAT}) problem, which is known to be NP-complete (\cite{schafer-1978}).

A {\sc NAE3-SAT} (\emph{not-all-equal 3-satisfiability}) instance $\Phi=(X,C)$ is defined over a set $X=\{x_1,\ldots,x_q\}$ of $q$ Boolean variables and a collection $C$ of $r$ clauses, each  defined as a set of three literals: every variable $x_i$ corresponds to two literals, that is, $x_i$ (the positive form) and $\bar x_i$ (the negative form). To simplify the notations we will denote by $\{\ell_1, \ell_2, \ell_3\}$ the clause with literals $\ell_i, i\in[3]$, without distinction between the orders in which they are listed. A truth assignment assigns a Boolean value ($True$ or $False$) to each variable, corresponding to a truth assignment of opposite values for the two literals $x_i$ and $\bar {x_i}$: $\bar{x_i}$ is $True$ if and only if $x_i$ is $False$. 
	
The {\sc NAE3-SAT} problem asks whether there is a truth assignment to the variables such that in no clause all three literals have the same truth value. We will say that such an assignment is \emph{satisfying} and the instance $\Phi $ is \emph{satisfied}.

\begin{theorem}\label{thm:mvc-NP}
The {\sc Total MV-Coloring} problem is NP-complete even for instances $(G,k)$ with $k=2$.
\end{theorem}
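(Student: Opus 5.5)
The plan is to check membership in NP directly and to prove hardness by a polynomial reduction from {\sc NAE3-SAT}. The reduction encodes the two colors as truth values and the not-all-equal clauses as forbidden monochromatic common neighbourhoods.

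\textbf{Membership in NP.} Given $c:V(G)\to[2]$, we test whether each class $M=c^{-1}(i)$ is a total mutual-visibility set. For every pair $x,y$ we run a BFS from $x$ in which only $x$, $y$ and vertices of $V(G)\setminus M$ may be used. Then $x,y$ are $M$-visible exactly when the distance found equals $d_G(x,y)$. This takes polynomial time.

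\textbf{Basic gadget.} The reduction rests on one observation. Suppose $x,y$ satisfy $d_G(x,y)=2$ and have common neighbourhood exactly $N_G(x)\cap N_G(y)=\{w_1,\dots,w_s\}$. Then the shortest $x,y$-paths are exactly the paths $xw_jy$. Hence $x,y$ are $M$-visible for a class $M$ if and only if some $w_j\notin M$. In a $2$-coloring, both classes are total mutual-visibility sets only if the $w_j$ are not all of one color.
\begin{itemize}
\item With $s=2$ this forces $w_1,w_2$ to receive different colors.
\item With $s=3$ it forces exactly the not-all-equal condition on $w_1,w_2,w_3$.
\end{itemize}
Given $\Phi=(X,C)$, I would build $G$ as follows.
\begin{itemize}
\item Create a literal vertex for each of $x_i$ and $\bar x_i$.
\item For each variable, add two vertices $p_i,q_i$ whose common neighbourhood is exactly $\{x_i,\bar x_i\}$. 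This forces opposite colors on a variable and its negation.
\item For each clause $\{\ell_1,\ell_2,\ell_3\}$, add two vertices $a_j,b_j$ whose common neighbourhood is exactly the three literal vertices. This forces the not-all-equal condition.
\end{itemize}
Then set $k=2$ and read color $1$ as $True$.

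\textbf{Correctness.} If $G$ has a total mutual-visibility $2$-coloring, its restriction to the literal vertices gives a NAE-satisfying assignment, by the gadget observation. The converse is the real work. Given a satisfying assignment, we must colour all of $G$, including the auxiliary vertices $p_i,q_i,a_j,b_j$, so that \emph{every} pair of vertices of $G$ is visible with respect to both classes, not only the designated pairs.

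\textbf{Main obstacle.} The gadgets share literal vertices, so unintended pairs must not create extra constraints. For example, $a_j$ and $a_k$ may have a single common literal, which would make that literal the centre of a convex $P_3$ and give $\chimut(G)=\infty$ by Theorem~\ref{th_chimut-inf}.

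To prevent this I would add padding that gives every non-designated pair many internally disjoint shortest paths, using fresh vertices that can be colored in both colors. Concretely, I would first make the literal vertices a clique. I would then join every non-designated pair of auxiliary vertices either by an edge, or by at least two private new common neighbours $z,z'$ colored $1$ and $2$. Each such padding pair only needs to see distance-$2$ pairs through other padding or auxiliary vertices. I would check three things:
\begin{itemize}
\item the padding does not add common neighbours to the designated pairs $(p_i,q_i)$ and $(a_j,b_j)$;
\item no vertex is the centre of a convex $P_3$;
\item the diameter stays small, ideally $2$ or $3$, so that shortest paths can be enumerated by cases.
\end{itemize}
I expect this case analysis, verifying that every pair in the padded graph is visible under the constructed coloring, to be the delicate step. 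If the $K_{2,s}$-type gadgets interact badly, I would replace each designated pair by a longer convex structure, for example a pair at distance $3$ with parallel routes, and redo the same argument.
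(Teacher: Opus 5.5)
Your NP-membership argument and your distance-$2$ common-neighbourhood observation are correct, and that observation is exactly what the paper's reduction exploits. The gap is the completeness direction (satisfying assignment $\Rightarrow$ total mutual-visibility $2$-coloring). You list what would have to be checked but never check it, and the padding you propose cannot pass the check.

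Suppose $z$ is a private padding vertex, i.e.\ $N(z)=\{u,w\}$. Take any neighbour $\ell\neq w$ of $u$ that is not adjacent to $w$. The unique common neighbour of $z$ and $\ell$ is $u$, so $z\,u\,\ell$ is a convex $P_3$. By Theorem~\ref{th_chimut-inf} this gives $\chimut(G)=\infty$, whatever $\Phi$ is.

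Such configurations are forced in your graph. If $x_i\in c_j$ and $\bar x_i\notin c_j$, then $p_i\,x_i\,a_j$ and $q_i\,x_i\,a_j$ are convex $P_3$'s that must be destroyed, and each option you allow fails:
\begin{itemize}
\item Adding both edges $p_ia_j$ and $q_ia_j$ puts $a_j$ into $N(p_i)\cap N(q_i)$, so the variable gadget no longer forces $x_i,\bar x_i$ to differ.
\item Padding both pairs gives padding vertices $z,z''$ with $N(z)\cap N(z'')=\{a_j\}$.
\item An edge $p_ia_j$ together with a padding vertex $z$ for $(q_i,a_j)$ gives the convex $P_3$ $z\,a_j\,p_i$.
\end{itemize}
So the construction as stated is not merely unverified; it fails. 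The fallback with ``longer convex structures'' is too unspecified to assess.

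The missing idea is how the paper neutralizes unintended pairs: use \emph{shared} hubs, and duplicate every auxiliary vertex into a twin pair that receives opposite colours. The paper's graph has three ingredients:
\begin{itemize}
\item a $K_4$ on $\{u_i,u_i',v_i,v_i'\}$ for each variable;
\item twins $x,y$ adjacent to all literal vertices $u_i,u_i'$;
\item twins $c_j,c_j'$ adjacent to the literal vertices of $S_j$ and to all $v_i,v_i'$.
\end{itemize}
The designated pairs are $x,v_i$, with common neighbourhood $\{u_i,u_i'\}$, and $x,c_j$, with common neighbourhood $S_j$.

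In the coloring built from an NAE assignment, $\{x,y\}$, $\{v_i,v_i'\}$, $\{c_j,c_j'\}$ and $\{u_i,u_i'\}$ are all oppositely coloured. Every other pair at distance $2$ has one of these pairs among its common neighbours. For example:
\begin{itemize}
\item $u_i,u_k$ share $x,y$;
\item $v_i,v_k$ share all $c_j,c_j'$;
\item $c_j$ and a literal vertex of variable $i$ not in $S_j$ share $v_i,v_i'$;
\item $v_i$ and a literal vertex of variable $k\neq i$ in $S_j$ share $c_j,c_j'$.
\end{itemize}
The only pairs at distance $3$ are $v_i$ (or $v_i'$) and a literal vertex $u$ of another variable $k$ occurring in no clause. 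These are joined by the geodesics $v_i\,c\,v\,u$ with $c\in\{c_j,c_j'\}$ and $v\in\{v_k,v_k'\}$, so one of them avoids each colour class. With this built-in redundancy no padding is needed, no convex $P_3$ arises, and the forward direction reduces to a short case check, which the paper only states briefly.
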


\begin{proof}
The {\sc Total MV-Coloring} problem is in NP since, given a coloring of $G$, it is possible to check in polynomial time  whether each color class is a total mutual-visibility set.

\medskip
In order to proceed with our arguments, given an instance $\Phi=(X,C)$ on $q$ variables and $r$ clauses, we construct a graph $G=(V,E)$ and prove that such $G$ admits a total mutual-visibility $2$-coloring if and only if $\Phi$ is satisfiable. We proceed as follows.
\begin{itemize}
    \item For each variable $x_i$ ($i\in[q]$), introduce a gadget isomorphic to $K_4$ on vertices $\{u_i, {u}_i', v_i, {v}_i'\}$.
    \item For each clause $c_j$ ($j\in[r]$), introduce two vertices $c_j$ and $c_j'$, where $c_j'$ is adjacent to exactly the same vertices as $c_j$. Connect $c_j$ (and thus $c_j'$) to the appropriate literal vertex: to $u_{i}$ if $x_{i}\in c_j$, or to ${u}_{i}'$ if $\bar{x}_{i}\in c_j$. Let $S_j$ be the set of these three vertices (one per literal of $c_j$).
    \item Each $v_i$ and ${v}_i'$ are each connected to every $c_j$ ($j\in[r]$), and so, to each $c_j'$.
    \item Additionally, introduce vertices $x$ and $y$, and add edges to both $u_i$ and ${u}_i'$ for every $i\in[q]$.
\end{itemize}

The construction is clearly polynomial in the size of $\Phi$. An example of the construction is drawn in Figure \ref{fig:complex-Tot}. 

\begin{figure}[ht] 
\centering
\begin{tikzpicture}[
     vertex/.style={circle, draw,thick,minimum size=3mm,inner sep=0pt,font=\scriptsize},
     copy/.style={rectangle, draw, thick, rounded corners=3pt, minimum width=7mm, minimum height=5mm, font=\scriptsize},
     scale=1.0,
        every label/.style={font=\normalsize}
    ]

     \node[vertex ,fill=black, label=left:$v_1$] (v1)  at (-4,0)  {};
     \node[vertex , fill=black,label=left:$u_1$] (v2)  at (-4,1)  {};
     \node[vertex ,label=right:$v_1'$] (v3)  at (-3,0)  {};
     \node[vertex ,label=right:$u_1'$] (v4)  at (-3,1)  {};

  \node[vertex ,fill=black,label=left:$v_2$] (v5)  at (-1,0)  {};
     \node[vertex ,label=left:$u_2$] (v6)  at (-1,1)  {};
     \node[vertex ,label=right:$v_2'$] (v7)  at (0,0)  {};
     \node[vertex ,fill=black, label=right:$u_2'$] (v8)  at (0,1)  {};

     \node[vertex ,label=left:$v_3$] (v9)  at (2,0)  {};
     \node[vertex ,fill=black,label=left:$u_3$] (v10)  at (2,1)  {};
     \node[vertex ,fill=black,label=right:$v_3'$] (v11)  at (3,0)  {};
     \node[vertex ,label=right:$u_3'$] (v12)  at (3,1)  {};

     \node[vertex ,label=left:$v_4$] (v13)  at (5,0)  {};
     \node[vertex ,label=left:$u_4$] (v14)  at (5,1)  {};
     \node[vertex ,fill=black,label=right:$v_4'$] (v15)  at (6,0)  {};
     \node[vertex ,fill=black,label=right:$u_4'$] (v16)  at (6,1)  {};

     \node[vertex , fill=black,label=above:$x$] (v17)  at (-0.5,4)  {};
     \node[vertex ,label=above:$y$] (v18)  at (2.5,4)  {};

    \node[vertex ,label=below:$c_1$] (v19)  at (-2,-4)  {};
     \node[vertex ,fill=black,label=below:$c_2$] (v20)  at (1,-4)  {};
     \node[vertex ,label=below:$c_3$] (v21)  at (4,-4)  {};

  \node[copy,fill=black, label=below:$c_1'$] (v22) at (-2,-5.2) {};
     \node[copy, label=below:$c_2'$] (v23) at (1,-5.2) {};
     \node[copy, fill=black, label=below:$c_3'$] (v24) at (4,-5.2) {};

     \foreach \p in {v1,v3,v5,v7,v9,v11,v13,v15} {
        \draw [gray!50, thin](v19) -- (\p);
        \draw [gray!50, thin](v20)  -- (\p);
        \draw [gray!50, thin](v21)  -- (\p);
    }

   \foreach \p in {v2,v4,v6,v8,v10,v12,v14,v16} {
        \draw [gray!50, thin](v17) -- (\p);
        \draw[gray!50, thin] (v18)  -- (\p);
}
    
 \draw (v1)-- (v2) -- (v3) -- (v4) -- (v2);
  \draw (v5)-- (v6) -- (v7) -- (v8) -- (v6);
   \draw (v9)-- (v10) -- (v11) -- (v12) -- (v10);
    \draw (v13)-- (v14) -- (v15) -- (v16) -- (v14);
 \draw (v3) -- (v1) -- (v4);
  \draw (v7) -- (v5)-- (v8);
   \draw (v11) -- (v9)-- (v12);
    \draw (v15) -- (v13)-- (v16);

\draw [teal,thick](v19)-- (v2);
\draw [teal,thick](v19) -- (v8);
\draw [teal,thick](v19) -- (v14);

\draw [teal,thick](v20)-- (v8);
\draw [teal,thick](v20) -- (v12);
\draw [teal,thick](v20) -- (v16);

\draw [teal,thick](v21)-- (v2);
\draw [teal,thick](v21) -- (v6);
\draw [teal,thick](v21) -- (v12);

\end{tikzpicture}

\caption{\small An illustration of the graph $G$, constructed in the proof of Theorem \ref{thm:mvc-NP}. Here, $X=\{x_1,x_2,x_3,x_4\}$, $c_1=\{x_1,\bar{x_2},x_4\},c_2=\{\bar{x_2},\bar{x_3},\bar{x_4}\},c_3=\{x_1,x_2,\bar{x_3}\}$. Each $c_j'$ is represented by a box and is adjacent to all the neighbors of $c_j$. For clarity of the picture these adjacencies are not drawn.  }\label{fig:complex-Tot}
\end{figure}

Observe first that there exist exactly three shortest paths from $x$ (resp.\ $y$) to each $c_j$ whose internal vertices are precisely the vertices in $S_j$. Therefore, in any total mutual-visibility coloring, the vertices of $S_j$ cannot all receive the same color (otherwise a geodesic from $x$ or $y$ to any $c_j$ would be internally blocked by that color class).

We now claim that $\Phi$ is satisfiable if and only if $G$ admits a total mutual-visibility $2$-coloring.

\medskip
($\Rightarrow$) Suppose $t:X\to\{\text{True},\text{False}\}$ is a satisfying truth assignment for $\Phi$. Color the vertices of $G$ with two colors (black and white) as follows:
\begin{itemize}
    \item For each $i$, color $u_i$ white and ${u}_i'$ black if $t(x_i)=\text{True}$ (and vice versa if $t(x_i)=\text{False}$).
    \item Color pairs such as $\{v_i,{v}_i'\}$ and $\{c_j,c_j'\}$ with opposite colors.
    \item Color $x$ and $y$ with opposite colors (one black and one white)
\end{itemize}

The NAE (not-all-equal) property of $t$ ensures that no set $S_j$ is monochromatic. Due to the mixed colors in the variable and clause gadgets together with the alternative routing provided by the construction, for every pair of vertices in $G$ there exists a shortest path whose internal vertices avoid each color class. Thus, both color classes are total mutual-visibility sets.

\medskip
($\Leftarrow$) Conversely, assume that there exists a total mutual-visibility $2$-coloring of $G$ with colors black and white. Hence, no matter which colors the vertices of a pair $z,z'$ have, where (w.l.o.g.) $z\in \{x,y\}$ and $z'=v_j$ for some $j\in [q]$, such two vertices $z,z'$ must be visible with respect to each color class (black and white). Thus, within each $K_4$ gadget corresponding to a variable $i\in [q]$, the vertices $u_i$ and $u_i'$ must receive different colors (enforced by the structure and connections between $z$ and $z'$). Define a truth assignment $t$ by setting $t(x_i)=\text{True}$ if $u_i$ is white and $t(x_i)=\text{False}$ if $u_i$ is black.  

Suppose for contradiction that $\Phi$ is not satisfiable. Then there exists a clause $c_h$ whose three literals all have the same truth value under $t$. This implies that the three vertices in $S_h$ are all the same color, say white. But then all shortest paths from $z$ to $c_h$ have internal vertices belonging to the white class, contradicting the assumption that the white class is a total mutual-visibility set.

Therefore, $t$ is a satisfying NAE assignment for $\Phi$. This completes the reduction.
\end{proof}

\begin{theorem}\label{thm:dmvc-NP}
The {\sc Dual MV-Coloring} problem is NP-complete even for instances $(G,k)$ with $k=2$.
\end{theorem}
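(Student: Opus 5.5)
We plan to follow the template of Theorem~\ref{thm:mvc-NP}: membership in NP is immediate, since for a given $2$-coloring one can compute all distances and, for each pair of vertices and each color class $P_i$, test in polynomial time whether a shortest path avoiding $P_i$ internally exists (by a BFS in the graph obtained by deleting the internal candidates of $P_i$ and comparing distances). For hardness we reduce from {\sc NAE3-SAT}. The key observation for $k=2$ is that a $2$-coloring $\{B,W\}$ is a dual mutual-visibility coloring if and only if every pair of vertices of the same color is visible with respect to \emph{both} classes: pairs inside $B$ must be $B$-visible and, being outside $W$, also $W$-visible, and symmetrically for $W$. So the condition only concerns monochromatic pairs, which is weaker than the total case, and the gadget must be adapted so that monochromatic pairs encode the clause constraint.

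We will modify the graph $G$ of Theorem~\ref{thm:mvc-NP}. We keep the $K_4$ variable gadgets $\{u_i,u_i',v_i,v_i'\}$, the clause twins $c_j,c_j'$ joined to the literal vertices $S_j$ and to all $v_i,v_i'$, and the vertices $x,y$ joined to all $u_i,u_i'$. The essential property is again that the only shortest $x,c_j$-paths (and $y,c_j$-paths) pass through $S_j$, with distance $2$. Since $\{x,y\}$ and $\{c_j,c_j'\}$ are twin pairs and twins are always mutually visible at distance $2$ via many routes, we intend to force a color on some of these vertices by pairing: for each clause, among $x,y,c_j,c_j'$ some monochromatic pair $\{z,c\}$ with $z\in\{x,y\}$, $c\in\{c_j,c_j'\}$ must exist whenever both colors are not perfectly split; to make the argument clean we will attach, if necessary, extra twins (e.g. three copies $x,y,y'$ of the top vertex) so that by pigeonhole two of them share a color with some clause copy. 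Then that pair must be visible with respect to both classes, i.e., $S_j$ contains a vertex of each color: exactly the NAE condition. Similarly, monochromatic pairs among the top vertices and the $v_i,v_i'$ will force $u_i$ and $u_i'$ to receive different colors, making the truth assignment well defined.

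For the forward direction we color as in Theorem~\ref{thm:mvc-NP} ($u_i,u_i'$ according to the assignment, opposite colors on $v_i,v_i'$, on the clause twins and on the top vertices) and verify, by the distance structure (diameter at most $3$, with all distance-$3$ pairs having several geodesics through both a black and a white intermediate), that every monochromatic pair has a shortest path whose interior is entirely of one color of our choice — actually we need the interior to avoid both classes, which is only possible for distance-$2$ or shorter pairs via a common neighbor... this is the delicate point: an internal vertex is always in $B$ or $W$, so a monochromatic pair at distance at least $2$ must be visible w.r.t. the class not containing the internal vertex and w.r.t. its own class too. Hence we need every monochromatic pair at distance $\ge2$ in our coloring to have two geodesics, one with interior white and one with interior black; more precisely each color class condition separately asks for a geodesic avoiding that class internally.

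The main obstacle is this forward verification: checking all monochromatic pairs (top–$v_i$, $v_i$–$c_j$ pairs, $u$–$u$ pairs across gadgets, clause–clause pairs) each have, for each color, a geodesic whose interior avoids that color. I would first try to handle this by duplicating structural vertices (making $x,y$ and the $v_i,v_i'$ come in twin pairs of opposite colors) so that every distance-$2$ pair has common neighbors of both colors, and treat distance-$3$ pairs ($x$ to $v$-vertices via $u$ then... ) by choosing interior vertices from oppositely colored twins; the backward direction should then follow exactly as in Theorem~\ref{thm:mvc-NP}.
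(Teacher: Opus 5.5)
Your reformulation for $k=2$ is correct. A $2$-coloring $\{B,W\}$ is a dual mutual-visibility coloring if and only if every monochromatic pair at distance at least $2$ has one geodesic with all internal vertices in $B$ and one with all internal vertices in $W$.

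\textbf{The backward direction has a genuine gap.} The clause condition on $S_j$ is only triggered if $c_j$ or $c_j'$ has the color of some top vertex. Likewise, the condition that $u_i,u_i'$ differ is only triggered if $v_i$ or $v_i'$ has the color of some top vertex. Your pigeonhole over $x,y,y'$ does not deliver this. It only produces two top vertices of the same color, which is irrelevant. It does not exclude the ``perfectly split'' case where all top vertices are black while $c_j,c_j'$ (or $v_i,v_i'$) are white, and then $S_j$ (or that gadget) is left unconstrained. Extra twins of the top vertex do not change this.

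The missing step is to show that the top vertices receive both colors, and this is what the paper proves. Suppose $x,y$ are both black. The pair $x,y$ needs a geodesic with white interior, so some literal vertex, say $u_1$, is white. A literal vertex of another gadget then cannot be white: together with $u_1$ it would need a white common neighbor, but the common neighbors are $x,y$. Two black literal vertices of different gadgets fail for the same reason. This step needs some care: it uses $q\ge 3$, and the paper's sketch does not address that two literal vertices occurring in a common clause also share $c_h,c_h'$.

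Once $x$ and $y$ have different colors, every $v_i$ and every $c_j$ is monochromatic with one of them. The common neighbors of a top vertex and $v_i$ are exactly $u_i,u_i'$, and those of a top vertex and $c_j$ are exactly $S_j$. This gives both that $u_i,u_i'$ receive different colors and the NAE property.

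\textbf{The forward direction is routine.} You flag it as the main obstacle and plan to handle it with further duplications, but for the unmodified graph it needs no changes, and this is how the paper argues. The coloring from Theorem~\ref{thm:mvc-NP} is a total mutual-visibility $2$-coloring, hence a dual one. Every pair at distance $2$ has two common neighbors forming one of the oppositely colored pairs $\{x,y\}$, $\{u_i,u_i'\}$, $\{v_i,v_i'\}$, $\{c_j,c_j'\}$. The one exception is a top vertex versus $c_j$, where the NAE property puts both colors in $S_j$. The only pairs at distance $3$ are a literal vertex occurring in no clause versus some $v_j$ or $v_j'$ of another gadget. These can be routed through a suitably colored $v_i$ or $v_i'$ followed by a suitably colored $c_k$ or $c_k'$.

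So no modification of the graph is needed. What your plan is missing is the argument forcing $x$ and $y$ to receive different colors.
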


\begin{proof}
Given an instance $\Phi=(X,C)$ on $q$ variables and $r$ clauses, consider the same graph constructed in Theorem \ref{thm:mvc-NP}. The ($\Rightarrow$) part follows similarly to that of Theorem \ref{thm:mvc-NP}, since every total mutual-visibility set is a dual mutual-visibility set.\par

($\Leftarrow$) Conversely, assume that there exists a dual mutual-visibility $2$-coloring of $G$ with colors black and white. If $x$ and $y$ are of the same color, say black, then there exists at least one $u_i$ or $u_i'$ with color white. Without loss of generality, let $u_1$ be colored white. Thus, all the vertices $u_i,u_i'$, with $i \in [q] \setminus \{1\}$, are not visible from $u_1$ and hence must be black. But, then $u_j$ and $u_k$ are not visible, for $j \neq k \in [q] \setminus \{1\}$, which is a contradiction. Therefore, $x$ and $y$ must be of different colors. Also, since $v_i$ must be visible either to $x$ or to $y$, it holds that $u_i$ and $u_i'$ must be of opposite colors for each $i \in [q]$. Define a truth assignment $t$ by setting $t(x_i)=\text{True}$ if $u_i$ is white and $t(x_i)=\text{False}$ if $u_i$ is black, and the rest of the proof follows similarly to that of Theorem \ref{thm:mvc-NP}.
\end{proof}

\begin{theorem}\label{thm:omvc-NP}
The {\sc Outer MV-Coloring} problem is NP-complete even for instances $(G,k)$ with $k=2$.
\end{theorem}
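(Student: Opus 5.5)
The plan is to follow the same scheme as Theorems \ref{thm:mvc-NP} and \ref{thm:dmvc-NP}. Membership in NP is immediate: for a given $2$-coloring we check, for every pair $x',y'$ with at least one endpoint in a color class $P_i$, whether some shortest $x',y'$-path avoids $P_i$ in its interior. This takes polynomially many BFS computations in the graph $G-(P_i\setminus\{x',y'\})$, each compared with $d_G(x',y')$. For hardness we reduce again from {\sc NAE3-SAT}, reusing the graph $G$ built in the proof of Theorem \ref{thm:mvc-NP}.

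\textbf{($\Rightarrow$) direction.} Given an NAE-satisfying assignment, we take the same black/white coloring as in Theorem \ref{thm:mvc-NP}. In that proof both classes were argued to be total mutual-visibility sets. Every total mutual-visibility set is an outer mutual-visibility set, so we obtain an outer mutual-visibility $2$-coloring directly.

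\textbf{($\Leftarrow$) direction.} This is where the work lies. Assume $G$ has an outer mutual-visibility $2$-coloring, black and white. The outer condition constrains only pairs with at least one endpoint in the class considered, so the dual-style argument (using pairs outside the class) is not available as such. I would derive the same structural constraints by rephrasing each blocked pair so that one endpoint lies in the blocking class.

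\emph{Step 1: $x$ and $y$ get different colors.} Suppose both are black. If every $u_i,u_i'$ were black, then $x$ and $y$, both black, would be $M$-invisible, since all their shortest paths (of length $2$) pass through some $u_i$ or $u_i'$. So some $u_i$, say $u_1$, is white. Now for $j\ne 1$, the pair $u_1,u_j$ is at distance $2$, and its common neighbours are $x$ and $y$, possibly together with some clause vertices. Since $x$ and $y$ are black, I would run the argument from the black side: pairs such as $x$ with $c_j$, or $x$ with $v_i$, have one black endpoint, and I would show they get blocked. This needs a careful check of which common neighbours exist. If that case analysis fails, the fallback is to add small pendant-type gadgets (for example, further twin copies of $x$ and $y$) that force $x$ and $y$ to receive opposite colors under the outer condition alone.

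\emph{Step 2: $u_i$ and $u_i'$ get different colors.} If both had the color of $x$, then the pair $x,v_i$, which has $x$ in that class, could only be seen through $u_i$ or $u_i'$ (or through a longer detour via clauses, which is not shortest). So the pair would be blocked. Symmetrically, if both had the color of $y$, the pair $y,v_i$ would be blocked.

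\emph{Step 3: the clause argument.} Each clause vertex $c_h$ and its twin $c_h'$ must be visible from whichever of $x,y$ shares the color of $S_h$. We exploit the twin pair $c_h,c_h'$ and the fact that $x$ and $y$ have opposite colors: whichever color $S_h$ has, one of $x,y$ belongs to that class. Hence if $S_h$ were monochromatic, that vertex would have all its shortest paths to $c_h$ blocked, which is a contradiction. The NAE condition then follows exactly as in Theorem \ref{thm:mvc-NP}.

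I expect Step 1 to be the main obstacle, possibly requiring a slight modification of the gadget.
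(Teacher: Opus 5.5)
Your membership in NP and your ($\Rightarrow$) direction are fine and match the paper. The gap is Step~1, which you leave open even though Steps~2 and~3 depend on it. If $x$ is black, the pair $x,v_i$ only has to be black-visible. That forbids $u_i,u_i'$ both black, but not both white. Likewise, a monochromatic $S_h$ gives a contradiction only if one of $x,y,c_h,c_h'$ has the color of $S_h$. The paper tries to bypass Step~1 by asserting that, since $x$ must see every $v_i$, the vertices $u_i,u_i'$ get opposite colors. That claim has exactly the same hole.

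In fact Step~1 is false for this graph, and the reduction itself fails in the outer setting. Take $q=3$ and the clauses $\{x_1,x_2,x_3\}$, $\{x_1,x_2,\bar x_3\}$, $\{x_1,\bar x_2,x_3\}$, $\{\bar x_1,x_2,x_3\}$. Every assignment makes one of them constant, so $\Phi$ is not NAE-satisfiable. Color $\{x,y,u_1,u_2,v_1,v_2,v_3,v_3',c_1',c_2',c_3',c_4'\}$ black and every other vertex white. The graph has diameter $2$, so a class $M$ is an outer mutual-visibility set if and only if every non-adjacent pair meeting $M$ has a common neighbour outside $M$. This holds for both classes:
\begin{enumerate}
\item[(a)] $x$ and $y$ share the white neighbour $u_3$.
\item[(b)] $x$ sees $v_1$ through $u_1'$, and $v_1'$ through $u_1'$ and $u_1$. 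Index $2$ is analogous, and $v_3,v_3'$ are seen through the white $u_3$.
\item[(c)] Each $S_j$ contains both colors, which settles $x$ and $y$ against every $c_j$ and $c_j'$.
\item[(d)] Literal vertices of different variables have the black common neighbour $x$. The only black literal vertices, $u_1$ and $u_2$, share with every literal of another variable a clause whose vertex $c_j$ is white.
\item[(e)] A literal vertex and $v_k$ of another variable have the common neighbours $c_j$ (white) and $c_j'$ (black) of a clause containing that literal.
\item[(f)] A literal vertex of variable $1$ or $2$ and a non-adjacent clause vertex share $v_i$ (black) and $v_i'$ (white). For variable $3$ they share $v_3$ (black) and the white complementary literal vertex, which lies in every clause avoiding the given literal.
\item[(g)] $c_1,c_1'$ serve every pair of $v$-vertices, and $v_1,v_1'$ serve every pair of clause vertices.
\end{enumerate}
So $G$ has an outer mutual-visibility $2$-coloring with $x,y$ both black and $u_3,u_3'$ both white, although $\Phi$ is unsatisfiable.

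Your fallback of adding twin copies of $x$ and $y$ does not help: color all copies black and the coloring stays valid. What is missing is a genuinely modified gadget. It must force both colors to occur among vertices that reach each $v_i$ only through $\{u_i,u_i'\}$ and each $c_h$ only through $S_h$. You would then also need to re-verify the ($\Rightarrow$) direction on the new graph.
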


\begin{proof}
Given an instance $\Phi=(X,C)$ on $q$ variables and $r$ clauses, consider the same graph constructed in Theorem \ref{thm:mvc-NP}. The ($\Rightarrow$) part follows similarly to that of Theorem \ref{thm:mvc-NP}, since every total mutual-visibility set is an outer mutual-visibility set.\par

($\Leftarrow$) Conversely, assume that there exists an outer mutual-visibility $2$-coloring of $G$ with colors black and white. Whatever be the color of $x$, all $v_i$'s must be visible with $x$ and hence, $u_i$ and $u_i'$ must have opposite colors for each $i \in [q]$. The rest of the proof follows similarly to that of Theorem \ref{thm:mvc-NP}.
\end{proof}

\section{Block graphs}\label{sec:block}

Recall that a block in a graph $G$ is a maximal biconnected subgraph. In particular, along our work, a graph is a \textit{block graph} if all its blocks are cliques (a complete subgraph). A tree is a special type of block graph where each block is an edge. Observe that  every block graph $G$ with at least two blocks, contains one vertex which is the center of a convex $P_3$. Thus by Theorem \ref{th_chimut-inf}, $\chimut(G)=\infty$ for every block graph with at least two blocks.  We next consider the other two parameters $\chimud$ and $\chimuo$ for block graphs. To see this, we recall that a vertex is called \textit{simplicial} if the set of its neighbors form a clique. Also, if $v\in V(G)$, by $G-v$ we represent the graph obtained from $G$ after removing the vertex $v$ and the edges incident with $v$. Clearly, if $v$ is a cut vertex, then $G-v$ has a larger number of components than $G$. If $v\in V(G)$ is a cut vertex, then by $cd(v)$ we represent the number of components of $G-v$.

\begin{proposition}
\label{prop:block}
If  $G$ is a connected graph and $\Omega$ is the set of its cut vertices, then 
$$\chimuo(G)\geq|\Omega|+1$$ 
Moreover, if $G$ is a block graph, then the equality is achieved.
\end{proposition}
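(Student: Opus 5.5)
Two things have to be shown: the lower bound $\chimuo(G)\ge|\Omega|+1$ for every connected $G$, and, for block graphs, an outer mutual-visibility coloring with exactly $|\Omega|+1$ colors.

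\emph{Lower bound.} The key claim is that a cut vertex $v$ can only be a singleton color class. Suppose $v$ lies in an outer mutual-visibility set $M$ together with another vertex $w\ne v$. Choose a component $C$ of $G-v$ with $w\notin C$, and a vertex $z\in C$ (possibly a neighbour of $v$). Every $w,z$-path passes through $v$, since $v$ separates $w$ from $z$. Hence every shortest $w,z$-path has $v$ as an internal vertex. This holds whether or not $z\in M$, because $w\in M$ and outer visibility requires $w$ to see every vertex of $G$. So $w,z$ are not $M$-visible, a contradiction. Therefore each cut vertex forms its own color class, which accounts for $|\Omega|$ colors. If $\Omega=V(G)$ were possible, the bound would still need one more color. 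But every connected graph on at least two vertices has at least two non-cut vertices (for example, the ends of a longest path, or the leaves of a spanning tree). So at least one further class is needed, and $\chimuo(G)\ge|\Omega|+1$.

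\emph{Upper bound for block graphs.} I will color each cut vertex with its own color and put all non-cut vertices into one class $M$. In a block graph the non-cut vertices are exactly the simplicial vertices. The step I expect to be the main obstacle is checking that $M$ is an outer mutual-visibility set. For this I will show that no vertex of $M$ is an internal vertex of any shortest path. Let $s\in M$ and let $a,b$ be its neighbours on a path $\dots a\,s\,b\dots$. Since $s$ is simplicial, $a$ and $b$ are adjacent, so the path could be shortcut and is not geodesic. Consequently, for any two vertices $x,y$ whatsoever, every shortest $x,y$-path has all its internal vertices outside $M$. This gives $M$-visibility for all pairs, in fact total visibility.

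The singleton classes $\{v\}$ are trivially outer mutual-visibility sets: the only pairs to check involve $v$ as an endpoint, and $v$ cannot be an internal vertex of a path of which it is an endpoint. Hence this coloring uses $|\Omega|+1$ colors, and equality holds for block graphs. The degenerate case $G=K_1$ is excluded by the paper's standing non-triviality assumption, and when $G$ is complete the formula gives $1$, agreeing with the earlier proposition.
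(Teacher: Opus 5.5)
Your proof is correct and follows essentially the same route as the paper: each cut vertex is forced into a singleton class by the component-of-$G-v$ argument, one further color comes from the existence of at least two non-cut vertices, and for block graphs the upper bound uses the same partition (all simplicial vertices in one class, each cut vertex alone). You also supply the step the paper only calls clear, namely that a simplicial vertex is never an internal vertex of a geodesic, so that class is even a total mutual-visibility set.
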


\begin{proof}
Let $c:V(G)\rightarrow [k]$ be an outer mutual-visibility $k$-coloring. We claim that if $c(v)=i$ and $v\in \Omega$, then $c(u) \neq i$ for every $u \in V(G)\setminus \{v\}$. Suppose, for contradiction, that  $c(u)=i$ for some $u \in V(G)\setminus \{v\}$. Since $v$ is a cut vertex, $G-v$ has at least two components. Let $C$ be a component of $G-v$ not containing $u$ and let $w \in C$ . Then any shortest path from $u$ to $w$ contains $v$ as an internal vertex, which violates the outer mutual-visibility property for the color class containing $u$ and $v$. Thus, no other vertex can share the color of a cut vertex. Namely, each cut vertex forms a singleton class in any outer mutual-visibility $k$-coloring.
 
Additionally, every connected graph with at least two vertices has at least two non-cut vertices. Therefore, at least $|\Omega|+1$ colors are required for an outer mutual-visibility coloring of $G$. Hence, $\chimuo(G)\geq|\Omega|+1$ .
 
Now, assume that $G$ is a block graph. Let $S$ be the set of all simplicial vertices of $G$. It is clear that $S$ and singletons $\{v\}$, for each $v \in \Omega$, are pairwise disjoint outer mutual-visibility sets whose union is $V(G)$. Thus, $\chimuo(G) \leq |\Omega|+1$, and the desired equality holds.   
\end{proof} 

Trees are particular cases of block graphs. In such situation, only leaves are not cut vertices. By Theorem \ref{th_chimut-inf}, $\chimut(T)=\infty$ for every tree $T$ of order at least $3$. Also, Proposition \ref{prop:block} above reads as follows for trees. 

\begin{corollary}
For any tree $T$ of order $n$ and $\ell$ leaves, $\chimuo(T)=n-\ell+1$.
\end{corollary}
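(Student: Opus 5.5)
The plan is to obtain the corollary directly from Proposition \ref{prop:block}, since a tree is a block graph whose blocks are all copies of $K_2$. It then suffices to identify the set $\Omega$ of cut vertices of $T$. This gives $\chimuo(T)=|\Omega|+1$, and the only remaining task is to show $|\Omega|=n-\ell$.

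We first dispose of the degenerate cases. If $n=2$, then $T=K_2$ with $\ell=2$, and $\chimuo(K_2)=1=n-\ell+1$ by the characterization of $\chimuo(G)=1$ for complete graphs. So we may assume $n\ge 3$; the case $n=1$ is excluded since graphs are assumed non-trivial.

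For $n\ge 3$ we check that a vertex of $T$ is a cut vertex if and only if it is not a leaf.
\begin{itemize}
\item A leaf $v$ has degree one, so $T-v$ is still a tree and hence connected; thus $v$ is not a cut vertex.
\item A vertex $v$ of degree $d\ge 2$ has neighbours $w_1,\dots,w_d$. Any $w_i,w_j$-path in $T-v$, together with $v$, would form a cycle in $T$. So these neighbours lie in distinct components of $T-v$, and $v$ is a cut vertex.
\end{itemize}
Every vertex of a tree with $n\ge 3$ has degree at least one, so the vertices split exactly into the $\ell$ leaves and the $n-\ell$ cut vertices. Hence $|\Omega|=n-\ell$, and Proposition \ref{prop:block} yields $\chimuo(T)=n-\ell+1$.

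There is no real obstacle here. The only point needing care is the small case $T=K_2$, where both vertices are leaves and the count must still be consistent; as computed above, the formula gives $1$, which agrees with the proposition since $\Omega=\emptyset$.
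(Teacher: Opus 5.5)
Your proposal is correct and follows the paper's approach. The paper also obtains this corollary by specializing Proposition~\ref{prop:block} to trees, noting that the non-cut vertices of a tree are exactly its leaves, so $|\Omega|=n-\ell$. Your separate treatment of $T=K_2$ is harmless but unnecessary, since there too $\Omega=\emptyset$ and $n-\ell=0$.
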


We next study the dual mutual-visibility chromatic number of block graphs, in which the set of cut vertices $\Omega$ plays an important role.
%A block graph $G$ belongs to the family $\mathcal{F}$, if $G$ is obtained from two cliques $K_r$ and $K_s$ by selecting one vertex from each clique and either (i) identifying such two vertices, or (ii) adding an edge between these two vertices. Notice that any graph from $\mathcal{F}$ has $1$ or $2$ cut vertices, and for instance, $P_3$ and $P_4$ belong to $\mathcal{F}$.

\begin{proposition}
\label{prop:block-dual}
If $G$ is a block graph with the set of cut vertices $\Omega$, then
$$\chimud(G)=\left\{\begin{array}{ll}
    1; & \mbox{if $G$ is a complete graph}, \\[0.1cm]
    %2; & \mbox{if $G\in \mathcal{F}$}, \\[0.1cm]
    |\Omega|; & \mbox{if $\Omega$ forms a block of $G$ and $cd(x)=2$ for every $x\in \Omega$},\\[0.1cm]
    |\Omega|+1; & \mbox{if $\Omega$ induces a proper subgraph of a block of $G$}\\
    & \mbox{and $cd(x)=2$ for every $x\in \Omega$},\\[0.1cm]
    \infty; & \mbox{otherwise}.
\end{array}\right.$$ 
\end{proposition}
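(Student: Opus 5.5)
The plan is to split according to the structure of $\Omega$, using Theorem~\ref{th_chimud-inf} for every case that yields $\infty$. The whole argument rests on one geodesic fact about block graphs. An induced path is the \emph{unique} shortest path between its endpoints, and two vertices at distance two have a unique common neighbour. Hence induced paths and induced stars $K_{1,3}$ whose leaves lie in distinct blocks through the centre are convex.

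If $\Omega=\emptyset$, then $G$ is complete and $\chimud(G)=1$. Otherwise we handle the two ways of landing in the ``otherwise'' case.

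\textbf{First way: some cut vertex $x$ has $cd(x)\ge 3$.} Pick neighbours of $x$ in three different components of $G-x$. They are pairwise non-adjacent, so they give a convex $K_{1,3}$ centred at $x$, and $\chimud(G)=\infty$ by Theorem~\ref{th_chimud-inf}.

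\textbf{Second way: every cut vertex has $cd=2$ but $\Omega$ is not contained in a single block.} Then there are two cut vertices $a,b$ with $d(a,b)\ge 2$. Extend a shortest $a,b$-path at both ends. At the $a$ end, add a neighbour of $a$ lying in the component of $G-a$ that does not contain $b$; at the $b$ end, do the symmetric thing. The result is an induced path on at least five vertices. Any five consecutive vertices of it form a convex $P_5$, so again $\chimud(G)=\infty$.

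\textbf{Remaining case: $\Omega\subseteq B$ for one block $B$, and $cd(x)=2$ for all $x\in\Omega$.} Then each $x\in\Omega$ lies in exactly one other block $B_x$, and every vertex of $B_x-x$ is simplicial.

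The key local lemma is the following. Let $M$ be a dual mutual-visibility set with $x\in M$. For $a\in B_x-x$ and $b\in B-x$, the unique geodesic between them is $a\,x\,b$. Hence $a$ and $b$ cannot both lie in $M$, and cannot both lie outside $M$. Since both sides are nonempty, it follows that $B-x$ lies entirely on one side of $M$ and $B_x-x$ entirely on the other.

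Now suppose two cut vertices $x\neq y$ share a class $M$.
\begin{itemize}
\item Since $y\in (B-x)\cap M$, the lemma gives $B-x\subseteq M$, and by symmetry $B-y\subseteq M$. So $B\subseteq M$.
\item The lemma also puts $(B_x-x)\cup(B_y-y)$ outside $M$.
\item Take $a\in B_x-x$ and $c\in B_y-y$. Their unique geodesic $a\,x\,y\,c$ has internal vertices $x,y\in M$, so $a,c$ are two vertices outside $M$ that are not $M$-visible. This is a contradiction.
\end{itemize}
Thus the cut vertices lie in pairwise distinct classes. Moreover, the class of each $x\in\Omega$ cannot meet $B-x$: if it did, the lemma would force $B-x\subseteq M$, which puts another cut vertex into the class of $x$ whenever $|\Omega|\ge2$. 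Hence the class of $x$ is contained in $\{x\}\cup(B_x-x)$.

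This yields the lower bounds.
\begin{itemize}
\item If $B=\Omega$, we get $\chimud(G)\ge|\Omega|$.
\item If $\Omega\subsetneq B$, the simplicial vertices of $B-\Omega$ need a further class, so $\chimud(G)\ge|\Omega|+1$. When $|\Omega|=1$, the lemma alone already shows that one class cannot contain all of $B_x-x$ and $B-x$, giving the bound $2$.
\end{itemize}

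For the matching upper bounds, take the classes $M_x=\{x\}\cup(B_x-x)$ for $x\in\Omega$. When $\Omega\subsetneq B$, add the class $B\setminus\Omega$.
\begin{itemize}
\item Inside each class, all vertices are pairwise adjacent.
\item For $M_x$, two vertices outside it lie in $G-(B_x-x)$. A shortest path between them passes only through cut vertices other than $x$ and through at most one vertex of $B$, so it can be chosen to avoid $x$.
\item For $B\setminus\Omega$, every geodesic between two outside vertices has only cut vertices as internal vertices.
\end{itemize}

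I expect the main obstacle to be the second way of reaching $\infty$: building the convex $P_5$ when $\Omega$ is not a clique. The uniqueness of geodesics along induced paths must be justified carefully, and one must make sure the two extensions go into the correct components. I would first prove a small auxiliary claim: every induced path in a block graph is the unique geodesic between its ends. Both infinite cases and the local lemma would then follow from this claim.
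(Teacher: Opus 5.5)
Your proof is correct and follows essentially the paper's route: the same case split with Theorem~\ref{th_chimud-inf} (via a convex $K_{1,3}$ or $P_5$) for the infinite cases, and the same observation that a dual mutual-visibility set containing a cut vertex $x$ puts $B-x$ and $B_x-x$ on opposite sides, together with the same $a\,x\,y\,c$ geodesic forcing cut vertices into distinct classes and the same colorings by the blocks $B_x$ (plus $B\setminus\Omega$ when $\Omega\subsetneq B$). You are more careful than the paper about convexity and the $|\Omega|+1$ lower bound, but two phrases need repair: the ``Hence'' placing the class of $x$ inside $\{x\}\cup(B_x-x)$ does not exclude vertices of $B_y-y$ for $y\neq x$ (harmless, since only $M\cap(B-x)=\emptyset$ is used), and a geodesic between two vertices outside $B_x$ can contain two vertices of $B$ (e.g.\ $u\,y\,z\,v$), so the correct reason it avoids $x$ is that its two neighbours on the path would both lie in $B-x$ and hence be adjacent.
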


\begin{proof}
If $G$ is a complete graph, then it clearly happens that $\chimud(G)=1$. If $G$ has only one cut vertex $x$ with $cd(x)=2$, then $G$ is a block graph obtained from two cliques, say $K_r$ and $K_s$, by selecting one vertex from each clique and identifying such two vertices (once identified in $G$, this vertex is $x$). The set of vertices of these cliques are then the blocks of $G$. One can readily see that $\{V(K_r),V(K_s)\setminus \{x\}\}$ forms a dual mutual-visibility $2$-coloring of $G$, and so, $\chimud(G)\le 2$, which is indeed an equality since $G$ is not complete. On the other hand, if $G$ has only one cut vertex $x$ with $cd(x)>2$, then $G$ contains a convex $K_{1,3}$, and by Theorem \ref{th_chimud-inf}, we deduce that $\chimud(G)=\infty$. In this sense, from now on, we assume $|\Omega|\ge 2$.

If $G$ has at least $2$ cut vertices at distance at least two, or it has a cut vertex $v$ such that $cd(v)\ge 3$, then $G$ contains a convex $P_5$ or a convex $K_{1,3}$, respectively. Thus, by Theorem \ref{th_chimud-inf}, we deduce that $\chimud(G)=\infty$. Hence, we may now consider that all the cut vertices of $G$ are pairwise adjacent (i.e., they induce a complete graph), and also that $cd(x)=2$ for every $x\in \Omega$. We indeed have that $G$ is formed by a block with vertex set $B$ containing all the cut vertices, and for any two cut vertices $x,y\in B$, there are two blocks (different from $K_1$) with vertex sets $B_x\ne B$ and $B_y\ne B$ such that $x\in B_x$, $x,y\in B$ and $y\in B_y$. Let $\Omega=\{x_1,\dots,x_r\}$ with $r=|\Omega|\ge 2$, and for any vertex $x_i\in \Omega$, let $B_{i}$ be the vertex set of the block containing $x_i$ different from $B$.

Consider any cut vertex $x_i\in \Omega$ and let $S_\ell$ be a dual mutual-visibility set of $G$ such that $x_i\in S_\ell$. If $S_\ell\cap (B_i\setminus \{x_i\})\ne\emptyset$, then it holds $S_\ell\subseteq B_i$ (for otherwise a vertex not in $B_i$ and other vertex outside $B_i$ are not $S_\ell$-visible). In addition, if there is a dual mutual-visibility set $S_\ell$ such that it contains at least two cut vertices $x_i,x_j$, then $S_\ell\cap B_i=\emptyset$ and $S_\ell\cap B_j=\emptyset$, but then any two vertices $u\in B_i$ and $v\in B_j$ (which are not in $B_\ell$) are not $S_\ell$-visible, which is not possible. Based on this fact, we deduce that each cut vertex of $G$ must belong to a different dual mutual-visibility color class in any dual mutual-visibility coloring of $G$, and so, $\chimud(G)\ge r$.  

If the block with vertex set $B$ is isomorphic to $K_{r}$ (all the vertices in $B$ are cut vertices), then it can be readily observed that the collection $\left\{B_1,B_2,\dots, B_r\right\}$ forms a dual mutual-visibility $r$-coloring of $G$. Thus, $\chimud(G)\le r$, and the desired equality $\chimud(G) = r = |\Omega|$ follows in this case. 

If the block with vertex set $B$ is not isomorphic to $K_{r}$, then there is a vertex $w\in B \setminus\Omega$ which is adjacent to every vertex in $\Omega$ (clearly such $w$ is not a cut vertex). In such a situation, it can be readily observed that the collection $\left\{A,B_1,B_2,\dots, B_r\right\}$ with $A=B\setminus \left(\bigcup_{i\in [r]}B_i\right)$ forms a dual mutual-visibility $(r+1)$-coloring of $G$. Thus, $\chimud(G)\le r+1$. 

Suppose $\chimud(G)=r$ and let $\{S_1,\dots,S_r\}$ be the respective dual mutual-visibility color classes of $G$. Since each two cut vertices cannot belong the a same class $S_\ell$, we can w.l.o.g. assume that $x_i\in S_i$ for every $i\in [r]$. Hence, there is a class $S_q$ such that $w\in S_q$ and clearly $S_q$ contains the cut vertex $x_q$. However, in such situation, for every $z\in B_q$, it follows that $w$ and $z$ are not $S_q$-visible, which is a contradiction. Thus, $\chimud(G)=r$ is not possible, and we deduce that $\chimud(G)\ge r+1$, which leads to the desired equality $\chimud(G) = r+1= |\Omega|+1$ in this setting. This argument completes the whole proof.
\end{proof}

The particular case of trees in the result above can be then written as follows.

\begin{corollary}
For any tree $T$ of order $n\ge 3$, 
$$\chimud(T)=\left\{\begin{array}{ll}
    2; & \mbox{if $T=P_3$ or $T=P_4$}, \\
    \infty; & \mbox{otherwise}.
\end{array}\right.$$   
\end{corollary}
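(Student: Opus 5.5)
The plan is to read this off Proposition~\ref{prop:block-dual}, using that a tree $T$ of order $n\ge 3$ is a block graph whose blocks are all copies of $K_2$ and whose cut vertices are exactly its non-leaf vertices. Since $n\ge 3$, $T$ is not complete, so the first case of Proposition~\ref{prop:block-dual} never applies. It then remains to decide, for each tree, which of the remaining three cases holds.

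\emph{Second case.} Suppose $\Omega$ forms a block of $T$ and $cd(x)=2$ for every $x\in\Omega$. A block of a tree has at most two vertices. If $|\Omega|=1$, the singleton $\Omega$ is not a block, because every block of a tree with $n\ge 3$ is a $K_2$; so this option is impossible. If $|\Omega|=2$, then $\Omega=\{a,b\}$ with $ab$ an edge. Since $cd(a)=cd(b)=2$, each of $a,b$ has degree exactly two, so $T$ consists of $a$, $b$ and one leaf attached to each. Hence $T=P_4$, and the proposition gives $\chimud(T)=|\Omega|=2$.

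\emph{Third case.} Suppose $\Omega$ induces a proper subgraph of a block of $T$ and $cd(x)=2$ for every $x\in\Omega$. That block is a $K_2$, so $\Omega$ is a single vertex $x$. With $cd(x)=2$, the vertex $x$ has exactly two neighbours, both leaves, so $T=P_3$ and $\chimud(T)=|\Omega|+1=2$.

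\emph{Remaining trees.} Every other tree of order at least $3$ falls into the ``otherwise'' case of Proposition~\ref{prop:block-dual}, so $\chimud(T)=\infty$. As an independent check via Theorem~\ref{th_chimud-inf}: a tree other than $P_3$ and $P_4$ either has a vertex of degree at least $3$, which is the centre of a convex $K_{1,3}$, or it is a path $P_n$ with $n\ge 5$, which contains a convex $P_5$. In both cases $\chimud(T)=\infty$.

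The only step needing care is the case analysis that pins the second and third cases down to exactly $P_4$ and $P_3$; this rests on the fact that every block of a tree is a $K_2$. Nothing else is expected to be an obstacle.
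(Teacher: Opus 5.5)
Your proposal is correct and follows the paper's route: the paper obtains this corollary by specializing Proposition~\ref{prop:block-dual} to trees. Your case analysis (every block is a $K_2$ and $cd(x)=\deg(x)$ in a tree, so the two finite cases reduce exactly to $P_4$ and $P_3$) supplies the details the paper leaves implicit. Your cross-check via Theorem~\ref{th_chimud-inf} is consistent with how the proposition itself handles the infinite case.
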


\section{Hamming graphs}\label{sec:Hamming}

% \begin{proposition}
%     Let $G$ be a connected graph with at least two vertices and $H$ be any graph with at least two vertices. Then $\chimut(G \circ H)=2$ whenever at least one of $G$ or $H$ is not a complete graph.
% \end{proposition}

The $2$-dimensional Hamming graphs (from now on Hamming graphs for short) are obtained as the Cartesian product $K_m\cp K_n$, where $m,n\ge 2$. In this sense, we assume next that $V(K_m \cp K_n)=\{(i,j): i\in [m], j\in[n]\}$. Before turning to our results, we briefly discuss what is known about their mutual-visibility chromatic number. Its exact determination appears to be difficult, as it is closely related to the well-known Zarankiewicz problem. In fact, it was proved in \cite{CDK} that $\mu(K_m\cp K_n)=z(m,n;2,2)$, where $z(m,n;2,2)$ denotes the maximum number of $1$'s in an $m \times n$ binary matrix containing no $2 \times 2$ submatrix of $1$'s. The determination of $z(m,n;2,2)$ is a special case of the Zarankiewicz problem, and its exact value is not known in general. Consequently, by inequality \eqref{eq:chimus-lower-bounds},
$$\chimu(K_m\cp K_n) \ge \left\lceil\frac{mn}{z(m,n;2,2)}\right\rceil.$$
In the case $m=n$, known bounds on $z(n,n;2,2)$ give a more explicit lower bound. For sufficiently large $n$, it is known that $z(n,n;2,2)\le \frac{n}{2}\left(1+\sqrt{4n-3}\right)$ and hence
$$\chimu(K_n\cp K_n)\ge\left\lceil\frac{2n}{1+\sqrt{4n-3}}\right\rceil.$$
In fact, it was proved in \cite{KKVY} that
$\chimu(K_n\cp K_n)=\Theta(\sqrt n)$ for sufficiently large $n$.

While the exact value of $\chimu(K_m\cp K_n)$ therefore seems difficult to determine in general, the situation is more tractable for its variants. We first determine the total and dual mutual-visibility chromatic numbers of $K_m\cp K_n$, and lastly, we consider the outer mutual-visibility chromatic number of $K_m\cp K_n$, for which the situation is more involved and leads to a connection with the star arboricity of complete bipartite graphs.

\begin{theorem}\label{th:hamm_chimut}
Let $m,n\ge 2$. Then $\chimut(K_m \cp K_n)=\min\{m,n\}$.
\end{theorem}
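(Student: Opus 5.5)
The plan is to first characterize the total mutual-visibility sets of $K_m\cp K_n$ explicitly. Both bounds then reduce to a covering question about rows and columns of the $m\times n$ grid. Recall that $K_m\cp K_n$ has diameter $2$. Two vertices at distance $2$ are $(i,j)$ and $(i',j')$ with $i\ne i'$ and $j\ne j'$. They have exactly two shortest paths, through $(i,j')$ and through $(i',j)$. Adjacent pairs are always $M$-visible.

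\textbf{Step 1 (blocking condition).} A set $M$ is a total mutual-visibility set if and only if there are no $i\ne i'$, $j\ne j'$ with both $(i,j')\in M$ and $(i',j)\in M$. In words, no two vertices of $M$ lie in different rows and different columns.

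\textbf{Step 2 (lines).} From Step 1, every two vertices of $M$ share a row or a column. I claim this forces $M$ to lie entirely in one row or one column. If $M$ had two vertices in a common row, say $(i,j_1),(i,j_2)$, and a third vertex $(i',j_3)$ with $i'\ne i$, then $j_3$ cannot differ from both $j_1$ and $j_2$. So, say, $j_3=j_1$; but then $(i',j_1)$ and $(i,j_2)$ differ in both coordinates, a contradiction. The case of two vertices in a common column is symmetric. Conversely, any subset of a row (or column) is a total mutual-visibility set. Indeed, for a distance-$2$ pair, the two middle vertices $(i,j')$ and $(i',j)$ lie in different rows and different columns, so at most one of them is in the given line and the other path is free. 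In particular $\chimut(K_m\cp K_n)<\infty$, consistent with Theorem~\ref{th_chimut-inf}, since every $P_3$ through a vertex either lies in a clique fiber or is part of a $C_4$.

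\textbf{Step 3 (the value).} Assume w.l.o.g.\ $m\le n$, with rows indexed by $[m]$. Coloring by rows gives $m$ classes, each a total mutual-visibility set by Step 2, so $\chimut\le m$. For the lower bound, a total mutual-visibility $k$-coloring gives $k$ lines, rows or columns, covering all $mn$ vertices.
\begin{itemize}
\item If some row is not among these lines, every vertex of that row must be covered by its own column, so all $n\ge m$ columns are used.
\item Otherwise all $m$ rows are used.
\end{itemize}
Either way $k\ge m=\min\{m,n\}$.

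The only step with real content is Step 2, the reduction from "pairwise share a line" to "all in one line". The three-vertex argument above settles it, so I expect no serious obstacle.
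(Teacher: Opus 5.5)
Your proof is correct. The upper bound is the same as the paper's: partition the vertex set into the $\min\{m,n\}$ parallel lines of larger size (the paper takes $M_j=\{(i,j): i\in[m]\}$ with $m\ge n$). Then observe that a single line contains at most one of the two middle vertices of a distance-$2$ pair.

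The lower bound is where you differ. The paper imports $\mut(K_m\cp K_n)=\max\{m,n\}$ from \cite{TK} and applies the counting bound $\chimut\ge\lceil mn/\mut\rceil$. You instead prove from scratch, in Steps 1--2, that the total mutual-visibility sets of $K_m\cp K_n$ are exactly the subsets of a single row or column. You then finish with a covering argument. Your route is self-contained and gives the full structure of these sets, so it reproves the cited value of $\mut$. The paper's route is shorter only because it relies on that external result.

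Once Step 2 is in hand, the case analysis in Step 3 is more than you need. Each class has at most $\max\{m,n\}$ vertices, so $k\ge mn/\max\{m,n\}=\min\{m,n\}$ immediately, which is exactly the paper's counting step.
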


\begin{proof}
Without loss of generality, assume that $m\ge n$. It is known from \cite{TK} that
$$\mut(K_m \cp K_n)=\max\{m,n\}=m.$$
Since $n(K_m \cp K_n)=mn$, the lower bound in \eqref{eq:chimus-lower-bounds} gives
$$\chimut(K_m \cp K_n) \ge \left\lceil\frac{mn}{m}\right\rceil=n.$$

For the reverse inequality, for every $j\in [n]$, let $M_j=\{(i,j): i\in [m]\}$. The sets $M_1,\ldots,M_n$ form a partition of $V(K_m \cp K_n)$. We claim that each $M_j$ is a total mutual-visibility set. Fix $j\in [n]$ and let $x=(a,b)$ and $y=(c,d)$ be two vertices of $K_m \cp K_n$. If $x$ and $y$ are adjacent, then the edge $xy$ is a geodesic with no internal vertices, and hence $x$ and $y$ are $M_j$-visible. Suppose now that $x$ and $y$ are not adjacent. Then $a\neq c$ and $b\neq d$, and therefore $d(x,y)=2$. There are two geodesics,
$$(a,b),(a,d),(c,d) \qquad\mbox{and}\qquad (a,b),(c,b),(c,d),$$
between $x$ and $y$. Their internal vertices are $(a,d)$ and $(c,b)$, respectively. Since $b\neq d$, at most one of these two vertices belongs to $M_j$. Hence, at least one of the two geodesics has no internal vertex in $M_j$, and thus $x$ and $y$ are $M_j$-visible.

Therefore, each $M_j$ is a total mutual-visibility set, and $\{M_1,\ldots,M_n\}$ defines a total mutual-visibility $n$-coloring of $K_m \cp K_n$. Consequently, $\chimut(K_m \cp K_n)\le n$. Together with the lower bound, this gives $\chimut(K_m \cp K_n)=n=\min\{m,n\}$.
\end{proof}

\medskip

The lower bound in \eqref{eq:chimus-lower-bounds} also gives some information about the dual mutual-visibility chromatic number of the Cartesian product of two complete graphs. Assume that $m\geq n$. Since $\mud(K_m\cp K_n)=m+n-1$ by \cite{CDKY2}, we obtain
$$\chimud(K_m \cp K_n) \ge \left\lceil\frac{mn}{m+n-1}\right\rceil.$$
If $n$ is fixed and $m$ tends to infinity, then
$$\left\lceil\frac{mn}{m+n-1}\right\rceil \xrightarrow{m \to \infty} n$$
On the other hand, when the two factors have almost the same order, the bound is considerably weaker. In particular, for $m=n$ it gives
$$\chimud(K_n \cp K_n) \ge \left\lceil\frac{n^2}{2n-1}\right\rceil=
\begin{cases}
\left\lceil\frac{n}{2}\right\rceil+1; & \text{$n$ is even},\\
\left\lceil\frac{n}{2}\right\rceil; & \text{$n$ is odd}.
\end{cases}$$

The next result shows that the exact value is nevertheless $n$ for all $m\geq n$. Thus, the general lower bound is asymptotically tight when one factor is much larger than the other, while for factors of (almost) equal order it may be far from the exact value.

\begin{theorem}\label{th:hamm_chimud}
Let $m,n\ge 2$. Then $\chimud(K_m \cp K_n)=\min\{m,n\}$.
\end{theorem}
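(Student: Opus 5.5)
By \eqref{eq:chimu-d-t} and Theorem~\ref{th:hamm_chimut} we immediately get $\chimud(K_m\cp K_n)\le \chimut(K_m\cp K_n)=\min\{m,n\}$. Indeed, the column partition $\{M_1,\dots,M_n\}$ used there is already a dual mutual-visibility coloring. So the whole work is the lower bound. Assume $m\ge n$, and call the sets $\{(i,j):j\in[n]\}$ rows (there are $m$ of them, each of size $n$) and the $M_j$ columns. We want to show that no dual mutual-visibility coloring uses fewer than $n$ colors.

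\textbf{Local characterization.} Two adjacent vertices are always visible. Two non-adjacent vertices $(a,b),(c,d)$ are $M$-visible if and only if at least one of the two corners $(a,d),(c,b)$ is outside $M$. Hence $M$ is a dual mutual-visibility set if and only if, for every rectangle $R=\{a,c\}\times\{b,d\}$ with $a\ne c$ and $b\ne d$, the set $M\cap R$ is neither all of $R$ nor exactly one diagonal pair of $R$. Equivalently, $|M\cap R|\in\{0,1,3\}$, or $M\cap R$ is two vertices lying in a common row or column. Everything afterwards is a combinatorial analysis of this $2\times 2$ pattern condition.

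\textbf{Structural lemma.} Using the pattern condition, I would classify dual mutual-visibility sets $M$. The first candidate statement is that every such $M$ is of one of the following forms:
\begin{itemize}
\item[(a)] $M$ is contained in a single row or a single column;
\item[(b)] $M$ is contained in a ``cross'' $R_i\cup M_j$, and in that case the row part and the column part interact only through the center $(i,j)$;
\item[(c)] $M$ is the complement of such a configuration (the complement-type sets must also be checked, since the condition is not symmetric under complement but forbids the diagonal pattern in both directions).
\end{itemize}
The key facts to establish are the following. If $M$ contains two vertices in a row $i$, say in columns $j,j'$, then in every other row $i'$ at most one of $(i',j),(i',j')$ lies in $M$. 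Also, two vertices of $M$ in different rows and different columns force a corner into $M$, and that propagates.

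\textbf{Counting.} Suppose a coloring with $k<n$ classes exists. Then every row (of size $n$) and every column (of size $m\ge n$) contains two vertices of the same color, so some classes contain ``row pairs'' and some contain ``column pairs''. Using the lemma, I would argue the following.
\begin{itemize}
\item A class of type (a) covers at most one column fully. Since $m\ge n$, covering all vertices with line-type classes needs at least $n$ of them.
\item Two cross-type classes with different centers cannot coexist. The complement condition for one of them applied to a rectangle through the two centers produces a forbidden diagonal pattern.
\item Hence at most one class is a genuine cross. Removing its row and column leaves an $(m-1)\times(n-1)$ Hamming-type configuration that still needs $n-1$ further classes, by induction on $n$ (the base case $n=2$ being that $K_m\cp K_2$ is not complete, so one color is impossible).
\end{itemize}

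\textbf{Main obstacle.} I expect the structural lemma, specifically ruling out ``mixed'' classes that are neither lines nor crosses and controlling how different classes interact through the complement condition, to be the hardest step. The induction also requires care because the restriction to a subgrid need not be convex in the required sense. My first attempt would be to pick a row and a column, each containing a monochromatic pair, and derive a contradiction directly from a single well-chosen rectangle.
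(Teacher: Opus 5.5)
Your upper bound and the $2\times 2$ pattern characterization are correct. The lower bound, however, rests on a step that fails: the claim that two cross-type classes with different centers cannot coexist. Whether a class is a dual mutual-visibility set depends on that class alone. So if each class passes your rectangle test, no rectangle ``through the two centers'' can produce a forbidden pattern for either of them, whatever the other classes look like. The claim is in fact false. In $K_4\cp K_4$, the classes
\[
R_1\cup M_1,\quad \{(2,2),(2,3),(2,4),(3,2),(4,2)\},\quad \{(3,3),(3,4)\},\quad \{(4,3),(4,4)\}
\]
form a dual mutual-visibility $4$-coloring that contains crosses centered at $(1,1)$ and at $(2,2)$. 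In addition, the structural lemma is never proved. Its case (c) contributes nothing, since for $m,n\ge 3$ the complement of a cross contains a full $2\times 2$ rectangle.

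The route can be repaired, and your two key facts already give the correct lemma. Suppose row $a$ meets $M$ in a column set $S$ with $|S|\ge 2$. A vertex $(c,e)\in M$ with $c\ne a$ and $e\notin S$ would force $(c,b)\in M$ for every $b\in S$; otherwise $(a,e),(c,b)\notin M$ would not be $M$-visible. That contradicts your first fact that row $c$ has at most one vertex of $M$ in $S$. Moreover, two vertices of $M$ outside row $a$ lying in different columns would form a diagonal pair. Hence $M\subseteq R_a\cup M_e$ with $(a,e)\in M$. If no row and no column contains two vertices of $M$, then $|M|\le 1$. So every class lies in some cross $R_i\cup M_j$.

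Now take any class and delete its row $i$ and column $j$. What remains is a copy of $K_{m-1}\cp K_{n-1}$, and it is convex, because both corners of a rectangle with opposite corners in it also lie in it. So your convexity worry is unfounded, and by Remark~\ref{lem:convex} the other at most $k-1$ classes restrict to a dual mutual-visibility coloring of it. Induction on $n$, with base case $n=2$ (a non-complete graph), gives $k\ge n$. The ``at most one cross'' claim is not needed at all.

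For comparison, the paper avoids both the classification and the induction. It shows that a color can be repeated in at most one row. If rows $i,i'$ both repeat $\alpha$, on column sets $A$ and $B$, then either $|A\cap B|\ge 2$ gives a full rectangle, or some $p\in A\setminus B$ and $q\in B\setminus A$ give a diagonal pattern. Since $k<n$ forces a repeated color in each of the $m$ rows, this yields $k\ge m\ge n$. Your closing idea of pairing a row with a column cannot succeed, because crosses legitimately contain both a row pair and a column pair. The comparison that produces the contradiction is between two rows.
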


\begin{proof}
Without loss of generality, assume that $m \ge  n$. For every $i \in [m]$ and $j \in [n]$, let
$$R_i=\{(i,\ell) : \ell \in [n]\} \qquad \mbox {and} \qquad C_j=\{(\ell,j) : \ell \in [m]\}$$
denote the corresponding row and column, respectively. 
The upper bound $\chimud(K_m \cp K_n) \le n$ follows directly from Theorem \ref{th:hamm_chimut}, since any total mutual-visibility coloring is also a dual mutual-visibility coloring.

%For every $j \in [n]$, let $M_j=C_j$. Any two vertices of $M_j$ are adjacent, hence, they are $M_j$-visible. Now let $x=(a,b)$ and $y=(c,d)$ be two vertices outside $M_j$. If $x$ and $y$ are adjacent, then they are again $M_j$-visible. Therefore, suppose that they are not adjacent. Then $a \neq c$ and $b \neq d$, and the path $(a,b),(a,d),(c,d)$ is an $(x,y)$-geodesic. Its unique internal vertex $(a,d)$ does not belong to $M_j$, since $y \notin M_j$ implies that $d \neq j$. Thus, $x$ and $y$ are $M_j$-visible. It follows that $M_j$ is a dual mutual-visibility set. Since $M_1, \ldots, M_n$ form a partition of $V(K_m \cp K_n)$, assigning one color to each of these sets gives a dual mutual-visibility $n$-coloring. Therefore,
%$$\chimud(K_m \cp K_n) \le n.$$

We next prove the lower bound. Suppose, to the contrary, that $K_m \cp K_n$ admits a dual mutual-visibility $k$-coloring, where $k < n$. Let $c: V(K_m \cp K_n) \to [k]$ be such a coloring, and let $M_\alpha=c^{-1}(\alpha)$ be the color class corresponding to color $\alpha \in[k]$. Choose a set $J \subseteq [n]$ of $k+1$ columns, which is possible since $k < n$. For every row $R_i$, the $k+1$ vertices of the set $\{(i,j) : j \in J\}$ are colored with only $k$ colors. Hence, by the pigeonhole principle, there exists a color $\alpha_i \in [k]$ which appears at least twice in row $R_i$ among the columns in $J$. We claim that
$\alpha_i \neq \alpha_{i'}$ for any two distinct rows $R_i$ and $R_{i'}$ (i.e., there do not exist two distinct rows with the same repeated color). Suppose otherwise. Then there exist distinct $i,i' \in [m]$ and a color $\alpha \in [k]$ such that $\alpha_i=\alpha_{i'}=\alpha$. Let
$$A=\{j \in J :  c(i,j)=\alpha\} \qquad \mbox{and}\qquad B=\{j\in J : c(i',j)=\alpha\}.$$
By the choice of $\alpha$, we have $|A| \ge 2$ and $|B| \ge 2$.

First suppose that
$|A\cap B|\geq2$. Choose distinct $p,q \in A \cap B$. Then all four vertices $(i,p)$, $(i,q)$, $(i',p)$, $(i',q)$ belong to $M_\alpha$. Consider the diagonal vertices $x=(i,p)$ and $y=(i',q)$. Since $i \ne i'$ and $p \ne q$, we have $d(x,y)=2$. Moreover, the only two $(x,y)$-geodesics are
$$(i,p),(i,q),(i',q) \qquad \mbox{and} \qquad (i,p),(i',p),(i',q).$$
Their internal vertices $(i,q)$ and $(i',p)$ both belong to $M_\alpha$. Hence, every $(x,y)$-geodesic contains an internal vertex from $M_\alpha$, and therefore $x$ and $y$ are not $M_\alpha$-visible. This contradicts the fact that $M_\alpha$ is a dual mutual-visibility set.

It remains to consider the case $|A \cap B| \le 1$. Since $|A| \ge 2$ and $|B| \ge 2$, there exist $p \in A \setminus B$ and $q \in B \setminus A$ (note that $p \neq q$). Thus, $(i,p),(i',q) \in M_\alpha$, while $(i,q),(i',p) \notin M_\alpha$. Consider the two vertices $x=(i,q)$ and $y=(i',p)$. Again, $d(x,y)=2$, and the only two $(x,y)$-geodesics are
$$(i,q),(i,p),(i',p) \qquad \mbox{and} \qquad (i,q),(i',q),(i',p).$$
The internal vertices $(i,p)$ and $(i',q)$ both belong to $M_\alpha$. Consequently, every $(x,y)$-geodesic contains an internal vertex from $M_\alpha$, so $x$ and $y$ are not $M_\alpha$-visible. Since both vertices lie outside $M_\alpha$, this again contradicts the fact that $M_\alpha$ is a dual mutual-visibility set.

We have proved that $\alpha_i \neq \alpha_{i'}$ whenever $i \neq i'$. Recall that, by the pigeonhole principle, every row $R_i$ contains a color $\alpha_i$ which appears at least twice among the $k+1$ selected columns. Hence, each of the $m$ rows has at least one repeated color. By what we have just proved, the same color cannot be chosen as a repeated color in two distinct rows. Therefore, the $m$ rows require $m$ distinct repeated colors. However, the coloring uses only $k$ colors in total, therefore $m \le k$. On the other hand, by our assumptions, it holds $k < n \le m$, which gives a contradiction. Thus, no dual mutual-visibility coloring of $K_m \cp K_n$ with fewer than $n$ colors exists, and hence $\chimud(K_m \cp K_n) \ge n$. Together with the upper bound, this yields $\chimud(K_m\cp K_n)=n=\min\{m,n\}$.
\end{proof}

\medskip

We now turn our attention to the outer mutual-visibility chromatic number of $K_m \cp K_n$. As in the dual case, it is useful to first consider the general lower bound in \eqref{eq:chimus-lower-bounds}. Assume that $m\ge n$. Since $\muo(K_m\cp K_n)=m+n-2$ by \cite{CDKY2}, we obtain
\begin{equation}\label{eq:chimuo-lower-bound}
\chimuo(K_m \cp K_n) \ge \left\lceil\frac{mn}{m+n-2}\right\rceil.
\end{equation}
For fixed $n$, as $m$ tends to infinity, we have
$$\left\lceil\frac{mn}{m+n-2}\right\rceil \xrightarrow{m\to\infty} n,$$
whereas for $m=n$,
\begin{equation}\label{eq:chimuo-m=n-lower-bound}
\chimuo(K_n \cp K_n) \ge \left\lceil\frac{n^2}{2n-2}\right\rceil=\left\lceil\frac{n}{2}\right\rceil+1.
\end{equation}
For the dual mutual-visibility chromatic number, Theorem~\ref{th:hamm_chimud} shows that the exact value is always $\min\{m,n\}$. However, the situation for the outer mutual-visibility chromatic number is more involved. It turns out that the problem can be naturally related to another well-known graph invariant called
the star arboricity.

A \emph{star forest} is a forest whose connected components are stars.
The \emph{star arboricity} of a graph $G$, denoted by $\sa(G)$, is the
minimum number of star forests whose union covers all edges of $G$. This notion was first introduced in \cite{AK}. The following theorem establishes a direct connection between
the outer mutual-visibility chromatic number of the Cartesian product of two complete graphs and the star arboricity of a complete bipartite graph.

\begin{theorem}\label{th:hamm_chimuo}
Let $m,n\ge 2$. Then $\chimuo(K_m\cp K_n)=\sa(K_{m,n})$.
\end{theorem}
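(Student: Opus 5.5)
The plan is to translate outer mutual-visibility sets of $K_m\cp K_n$ into edge sets of $K_{m,n}$, and to show that these are exactly the star forests. Write the bipartition of $K_{m,n}$ as $\{r_1,\dots,r_m\}\cup\{s_1,\dots,s_n\}$ and identify the vertex $(i,j)$ of $K_m\cp K_n$ with the edge $r_is_j$. This is a bijection $V(K_m\cp K_n)\to E(K_{m,n})$. Under it, every set $M\subseteq V(K_m\cp K_n)$ becomes a spanning subgraph $H_M$ of $K_{m,n}$, and every partition of the vertex set becomes a partition of $E(K_{m,n})$. The key claim is that \emph{$M$ is an outer mutual-visibility set of $K_m\cp K_n$ if and only if $H_M$ is a star forest}.

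To prove the claim I reuse the geodesic structure from the proofs of Theorems~\ref{th:hamm_chimut} and~\ref{th:hamm_chimud}. Adjacent vertices are always $M$-visible. For non-adjacent $x=(a,b)$ and $y=(c,d)$, with $a\ne c$ and $b\ne d$, there are exactly two geodesics, with internal vertices $(a,d)$ and $(c,b)$. Hence $M$ fails to be outer exactly when some such pair has $x\in M$ or $y\in M$ and $(a,d),(c,b)\in M$.

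Suppose $x\in M$. Then the edges $s_dr_a$, $r_as_b$ and $s_br_c$ all lie in $H_M$, so $H_M$ contains a path with three edges, with $x$ as its middle edge. The case $y\in M$ is symmetric, giving the path $r_a s_d r_c s_b$. Conversely, any $P_4$ in $H_M$ has the form $s_d r_a s_b r_c$ with $a\ne c$ and $b\ne d$, since $K_{m,n}$ is bipartite and a path has distinct vertices. Taking $x=(a,b)$ and $y=(c,d)$ then gives a violating pair. So $M$ is outer if and only if $H_M$ contains no path with three edges. Since $K_{m,n}$ has no triangles, a subgraph with no $P_4$ has components of diameter at most $2$ that are triangle-free, and these are exactly stars. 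This proves the claim.

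With the claim, an outer mutual-visibility $k$-coloring of $K_m\cp K_n$ is the same thing as a partition of $E(K_{m,n})$ into $k$ star forests. It remains to reconcile partitions with coverings in the definition of $\sa$. If $k$ star forests cover $E(K_{m,n})$, assign each edge to one forest containing it. Subgraphs of star forests are star forests, so this yields a partition into at most $k$ star forests, after discarding empty classes. Hence the minima coincide and $\chimuo(K_m\cp K_n)=\sa(K_{m,n})$.

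The only delicate step is the exact equivalence in the claim. In particular, the case $y\in M$ with $x\notin M$ must also produce a $P_4$, and pairs with both vertices in $M$ must be covered by the same analysis. Both checks follow from the symmetric description above.
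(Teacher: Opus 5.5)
This is the paper's route: identify $(i,j)$ with the edge $r_is_j$, which is the line-graph isomorphism $K_m\cp K_n\cong\mathcal{L}(K_{m,n})$. Then show via the two distance-$2$ geodesics that $M$ is outer exactly when $H_M$ has no $P_4$ subgraph, and transfer colorings to partitions into star forests. Your $P_4$ analysis is correct in both directions, including the cases $x\in M$ and $y\in M$. Your remark that a cover by $k$ star forests can be trimmed to a partition fills a step the paper uses without comment.

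One sentence is false, though: ``components of diameter at most $2$ that are triangle-free \dots\ are exactly stars.'' $C_4$ and $K_{2,3}$ are connected, triangle-free, of diameter $2$, and not stars. $C_4$ does occur in $K_{m,n}$: it is the nontrivial component of $H_M$ for $M=\{(a,b),(a,d),(c,b),(c,d)\}$. Diameter and triangle-freeness alone therefore do not give the conclusion. The missing observation is that excluding $P_4$ also excludes every cycle of length at least $4$, since such a cycle contains a $P_4$. Together with triangle-freeness, each component of $H_M$ is then a tree, and a tree of diameter at most $2$ is a star. This is the fact the paper invokes: every connected bipartite graph that is not a star contains a $P_4$. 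With that repair your proof is complete.
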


\begin{proof}
Recall first that $K_m \cp K_n$ is isomorphic to the line graph ${\mathcal{L}}(K_{m,n})$. Let the partite sets of $K_{m,n}$ be $X=\{x_1,\ldots,x_m\}$ and $Y=\{y_1,\ldots,y_n\}$. Then the isomorphism $\varphi: V(K_m \cp K_n) \to E(K_{m,n})$ is given by
$$\forall i \in [m], \forall j \in [n]: \varphi((i,j))=x_i y_j.$$
Note that two vertices $(i,j)$ and $(i',j')$ are adjacent in $K_m \cp K_n$ if and only if $i=i'$ or $j=j'$. This is equivalent to the corresponding edges $x_i y_j$ and $x_{i'}y_{j'}$ having a common endvertex in $K_{m,n}$, which is exactly the adjacency relation in the line graph.

For a subset $M \subseteq V(K_m\cp K_n)$, let $H_M$ be the spanning subgraph of $K_{m,n}$ with
$$E(H_M)=\{\varphi((i,j)): (i,j) \in M\}.$$
We first show that $M$ is an outer mutual-visibility set of  $K_m \cp K_n$ if and only if $H_M$ is a star forest.

Suppose first that $M$ is an outer mutual-visibility set. We claim that $H_M$ does not contain a $P_4$ as a subgraph. Suppose otherwise. Since $H_M$ is bipartite, such a path can be written as $x_iy_px_{i'}y_q$ for some distinct $i,i' \in [m]$ and distinct $p,q \in [n]$. Hence, $(i,p),(i',p),(i',q) \in M$. Suppose first that $(i,q) \in M$, and consider the diagonal vertices $u=(i,p)$ and $v=(i',q)$. Since $i \neq i'$ and $p\neq q$, we have $d(u,v)=2$. The only two $(u,v)$-geodesics are
$$(i,p),(i,q),(i',q) \qquad\mbox{and} \qquad (i,p),(i',p),(i',q).$$
Their internal vertices $(i,q)$ and $(i',p)$ both belong to $M$. Therefore, every $(u,v)$-geodesic contains an internal vertex from $M$, and thus $u$ and $v$ are not $M$-visible. Since both $u$ and $v$ belong to $M$, this contradicts the fact that $M$ is an outer mutual-visibility set. Suppose now that $(i,q) \notin M$, and consider the other diagonal pair $u=(i,q)$ and $v=(i',p)$. For them $u \notin M$ and $v \in M$. Again, $d(u,v)=2$, and the only two $(u,v)$-geodesics are
$$(i,q),(i,p),(i',p) \qquad\mbox{and}\qquad (i,q),(i',q),(i',p).$$
Their internal vertices $(i,p)$ and $(i',q)$ both belong to $M$. Hence, $u$ and $v$ are not $M$-visible. Since one of the two vertices belongs to $M$, this again contradicts the outer mutual-visibility property. Therefore, $H_M$ contains no $P_4$ as a subgraph. Every connected bipartite graph which is not a star contains a $P_4$ as a subgraph. Hence, every nontrivial connected component of $H_M$ is a star, and therefore $H_M$ is a star forest.

For the converse, suppose that $H_M$ is a star forest. We prove that $M$ is an outer mutual-visibility set. Let $u=(i,p)$ and $v=(i',q)$ be two vertices of $K_m \cp K_n$ such that at least one of them belongs to $M$. If $u$ and $v$ are adjacent, then the edge $uv$ is a geodesic with no internal vertices, and hence $u$ and $v$ are $M$-visible. Therefore, suppose that $u$ and $v$ are not adjacent. Then $i \neq i'$ and $p \neq q$, so $d(u,v)=2$. The only two $(u,v)$-geodesics are
$$(i,p),(i,q),(i',q) \qquad\mbox{and}\qquad (i,p),(i',p),(i',q).$$
Suppose, to the contrary, that $u$ and $v$ are not $M$-visible. Then the internal vertices of both geodesics belong to $M$, that is $(i,q),(i',p) \in M$. Since at least one of $u=(i,p)$ and $v=(i',q)$ also belongs to $M$, at least three of the four vertices $(i,p),(i,q),(i',p),(i',q)$ belong to $M$. Under the isomorphism $\varphi$, these four vertices correspond to the four edges $x_iy_p$, $x_iy_q$, $x_{i'}y_p$, $x_{i'}y_q$ of a copy of $C_4$ in $K_{m,n}$. Since at least three of these four edges belong to $H_M$, the graph $H_M$ contains a $P_4$ as a subgraph. This is impossible because $H_M$ is a star forest. Hence, $u$ and $v$ are $M$-visible.

We have therefore proved that $M \subseteq V(K_m \cp K_n)$ is an outer mutual-visibility set if and only if the corresponding edge set $\varphi(M) \subseteq E(K_{m,n})$ induces a star forest. We now apply this equivalence to colorings.

Let $k=\chimuo(K_m \cp K_n)$, and let $M_1,\ldots,M_k$ be the color classes of an outer mutual-visibility $k$-coloring of $K_m \cp K_n$. Since the sets $M_1,\ldots,M_k$ form a partition of $V(K_m \cp K_n)$ and $\varphi$ is a bijection, the edge sets $E(H_{M_1}),\ldots,E(H_{M_k})$ form a partition of $E(K_{m,n})$. By the equivalence proved above, each $H_{M_i}$ is a star forest. Therefore,
$$\sa(K_{m,n}) \le k=\chimuo(K_m \cp K_n).$$
For the reverse inequality, let $k=\sa(K_{m,n})$. Then $E(K_{m,n})$ can be partitioned into $k$ star forests $H_1,\ldots,H_k$. For every $\ell \in [k]$, let
$$M_\ell=\{(i,j) \in V(K_m \cp K_n) : \varphi((i,j)) \in E(H_\ell)\}.$$
Since the edge sets $E(H_1),\ldots,E(H_k)$ form a partition of $E(K_{m,n})$ and $\varphi$ is a bijection, the sets $M_1,\ldots,M_k$ form a partition of $V(K_m\cp K_n)$. Moreover, each $H_\ell$ is a star forest, and hence each $M_\ell$ is an outer mutual-visibility set. Assigning color $\ell$ to the vertices of $M_\ell$ for every $\ell \in [k]$ gives an outer mutual-visibility $k$-coloring of $K_m \cp K_n$. Consequently,
$$\chimuo(K_m \cp K_n) \le k=\sa(K_{m,n}).$$
Combining the two inequalities yields $\chimuo(K_m\cp K_n)=\sa(K_{m,n})$, as required.
\end{proof}

\medskip

Theorem~\ref{th:hamm_chimuo} shows that determining $\chimuo(K_m \cp K_n)$ is closely related to a classical graph decomposition problem. However, in contrast to the total and dual mutual-visibility chromatic number, this does not lead to a simple closed formula, since the exact value of $\sa(K_{m,n})$ is not known for arbitrary $m$ and $n$. Nevertheless, exact values are known for some families of complete bipartite graphs. In particular, Akiyama et al.\ \cite{AK} proved that
$$\sa(K_{n,n})=\left\lceil\frac{n}{2}\right\rceil+2$$
for $n \ge 7$. Together with Theorem~\ref{th:hamm_chimuo}, this gives an exact value for the outer mutual-visibility chromatic number of $K_n\cp K_n$.

\begin{corollary}\label{cor:hamm_chimuo_equal}
For every $n \ge 7$, $\chimuo(K_n \cp K_n)=\left\lceil\frac{n}{2}\right\rceil+2$.
\end{corollary}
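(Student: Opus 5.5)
This corollary follows by combining Theorem~\ref{th:hamm_chimuo} with the known value of the star arboricity of balanced complete bipartite graphs. Specializing Theorem~\ref{th:hamm_chimuo} to $m=n\ge 7$ gives $\chimuo(K_n\cp K_n)=\sa(K_{n,n})$. The result of Akiyama et al.\ \cite{AK} states that $\sa(K_{n,n})=\left\lceil\frac{n}{2}\right\rceil+2$ for every $n\ge 7$. Substituting this into the identity gives the formula.

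The only point needing care is to check that the hypotheses match. Theorem~\ref{th:hamm_chimuo} needs only $m,n\ge 2$, which holds, and the range $n\ge 7$ in the corollary is exactly the range where the star-arboricity formula is valid. No new combinatorics is required, so I do not expect any genuine obstacle.

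As a consistency check, the value $\left\lceil\frac{n}{2}\right\rceil+2$ exceeds the general lower bound \eqref{eq:chimuo-m=n-lower-bound}, namely $\left\lceil\frac{n}{2}\right\rceil+1$, by exactly one. This shows the counting bound from \eqref{eq:chimus-lower-bounds} is not tight for $K_n\cp K_n$ with $n\ge 7$. I would add this as a closing remark.

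If one wanted a self-contained argument rather than citing \cite{AK}, the hard part would be the lower bound $\sa(K_{n,n})\ge\left\lceil\frac{n}{2}\right\rceil+2$. I would attack it by counting star centers: each star forest has at most $2n-1$ edges and must cover $n^2$ edges, and one then analyzes the case of equality. For this corollary, however, the citation suffices.
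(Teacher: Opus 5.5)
Your proposal is correct and matches the paper's argument: specialize Theorem~\ref{th:hamm_chimuo} to $m=n$ and substitute Akiyama et al.'s value $\sa(K_{n,n})=\left\lceil\frac{n}{2}\right\rceil+2$ for $n\ge 7$; the paper also remarks, as you do, that this exceeds the counting bound \eqref{eq:chimuo-m=n-lower-bound} by exactly one. One small correction to your optional aside: a star forest in $K_{n,n}$ has at most $2n-2$ edges (it has at least two components, and this matches $\muo(K_n\cp K_n)=2n-2$), and it is this sharper bound, not $2n-1$, that gives the baseline $\left\lceil\frac{n}{2}\right\rceil+1$.
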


Note that in the case $m=n \ge 7$, the lower bound in
\eqref{eq:chimuo-m=n-lower-bound} is not sharp, but it differs from the exact value by one.

\medskip

We next consider the case when $n$ is fixed, while $m$ is sufficiently large. In this case, the lower bound in \eqref{eq:chimuo-lower-bound}, together with Theorem~\ref{th:hamm_chimuo}, can also be used in the
opposite direction. It turns out that $\sa(K_{m,n})=n$ when $m$ is sufficiently larger than $n$.

\begin{corollary}\label{cor:sa-upper-bound}
Let $n \ge 3$ and $m \ge n^2-3n+3$. Then $\sa(K_{m,n})=n$.
\end{corollary}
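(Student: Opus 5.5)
Since the paper says this follows by using the lower bound \eqref{eq:chimuo-lower-bound} together with Theorem~\ref{th:hamm_chimuo} ``in the opposite direction'', I will prove $\sa(K_{m,n})\le n$ and $\sa(K_{m,n})\ge n$ separately.

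\emph{Upper bound.} I take the $n$ stars centered at the vertices $y_1,\dots,y_n$ of the part of size $n$. The star centered at $y_j$ consists of all edges $x_iy_j$ with $i\in[m]$. These $n$ stars partition $E(K_{m,n})$, and each one is trivially a star forest, so $\sa(K_{m,n})\le n$. In the language of Theorem~\ref{th:hamm_chimuo}, these are exactly the columns $C_j$, which form an outer mutual-visibility coloring by Theorem~\ref{th:hamm_chimut}. This half needs no condition on $m$.

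\emph{Lower bound.} By Theorem~\ref{th:hamm_chimuo} and \eqref{eq:chimuo-lower-bound},
$$\sa(K_{m,n})=\chimuo(K_m\cp K_n)\ge \left\lceil\frac{mn}{m+n-2}\right\rceil.$$
It therefore suffices to show that $\frac{mn}{m+n-2}>n-1$, since the ceiling then equals at least $n$. This inequality is equivalent to
$$mn>(n-1)(m+n-2),$$
which simplifies to $m>(n-1)(n-2)=n^2-3n+2$.

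The hypothesis $m\ge n^2-3n+3$ is precisely $m>n^2-3n+2$, so the ceiling is at least $n$. We also need $m\ge n$ so that \eqref{eq:chimuo-lower-bound} applies as stated, with $\muo=m+n-2$ for $m\ge n$. For $n\ge 3$ we have $n^2-3n+3-n=(n-1)(n-3)+\ldots\ge 0$; more precisely, $n^2-4n+3=(n-1)(n-3)\ge 0$, so $m\ge n$ holds.

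Combining the two bounds gives $\sa(K_{m,n})=n$. The main point requiring care is checking that the hypotheses of the cited lower bound, namely $m\ge n$ and the value $\muo(K_m\cp K_n)=m+n-2$, hold in this range. The arithmetic itself is routine.
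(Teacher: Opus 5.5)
Your proof is correct and follows the paper's argument. The $n$ stars centered at the vertices of the part of size $n$ give $\sa(K_{m,n})\le n$, and Theorem~\ref{th:hamm_chimuo} combined with \eqref{eq:chimuo-lower-bound} reduces the lower bound to $m>(n-1)(n-2)$. Your check that $m\ge n$ holds in this range, via $(n-1)(n-3)\ge 0$, is a detail the paper leaves implicit.
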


\begin{proof}
By Theorem~\ref{th:hamm_chimuo} and \eqref{eq:chimuo-lower-bound}, we have
$$\sa(K_{m,n})=\chimuo(K_m\cp K_n) \ge \left\lceil\frac{mn}{m+n-2}\right\rceil.$$
We first determine how large $m$ has to be in order that the right-hand side is at least $n$. Thus,
\begin{align*}
\frac{mn}{m+n-2} & > n-1,\\
mn & > (n-1)(m+n-2),\\
mn & > mn-m+(n-1)(n-2),\\
m & > (n-1)(n-2),\\
m & \ge (n-1)(n-2)+1\\
m & \ge n^2-3n+3.
\end{align*}
Hence, $\sa(K_{m,n}) \ge n$ for $m \ge n^2-3n+3$ (and $n \ge 3$). On the other hand, the edges of $K_{m,n}$ can be partitioned into $n$ stars by taking all edges incident with each vertex of the partite set of cardinality $n$. Hence, $\sa(K_{m,n})\le n$. Combining both inequalities yields
$\sa(K_{m,n})=n$.
\end{proof}

\medskip

It is interesting to compare Corollary~\ref{cor:sa-upper-bound} with a recent result of Lew \cite{Lew}. In \cite[Proposition~3.4]{Lew}, it was proved that $\sa(K_{n,m})=n$ whenever $m\ge n^2-2n+2$. Corollary~\ref{cor:sa-upper-bound} improves this sufficient condition to $m\ge n^2-3n+3$.

\section{Strong grids}\label{sec:grids}

As usual, the vertex of $P_r\boxtimes P_t$ shall be represented as $[r]\times [t]$, where two vertices $(i,j),(i',j')$ of $P_r\boxtimes P_t$ are adjacent whenever (i) $i=i'$ and $j=j'-1$, or (ii) $i=i'-1$ and $j=j'$, or (iii) $i=i'-1$ and $j=j'-1$. First, note that from some related results appearing in \cite{CDKY} and \cite{CDDH}, it is readily deduced that if $r,t\ge 3$ are integers, then
\begin{equation}\label{eq:all-mus}
\mu(P_r\boxtimes P_t)=\mut(P_r\boxtimes P_t)=\mud(P_r\boxtimes P_t)=\muo(P_r\boxtimes P_t)=2r+2t-4,
\end{equation}
where the set of vertices $(\{1,r\}\times [t])\cup ([r]\times \{1,t\})$ is the one fulfilling the conditions to represent a (resp.) corresponding mutual-visibility of largest cardinality. On the other hand, the mutual-visibility chromatic number of the strong product of paths was studied in \cite{BPSY}, where the authors proved that
for any two integers $t,r$ with $\min\{t,r\}\geq2$ it holds
\begin{equation}\label{eq:chimu-strong-grid}
\chimu(P_r\boxtimes P_t)=\left\{\begin{array}{ll}
1; & \mbox{if $r=t=2$}, \\[0.15cm]
2; & \mbox{if $r\neq t$ and $\min\{r,t\}=2$}, \\[0.15cm]
\min\big{\{}\left\lceil\frac{r}{2}\right\rceil,\left\lceil\frac{t}{2}\right\rceil\big{\}}; & \mbox{otherwise}.
\end{array}\right.
\end{equation}
It is hence our goal to consider the other parameters of the variety for this family of graphs.

\begin{theorem}
\label{th:chimut-strong-grid}
If $r\ge t\ge 2$, then
$$\chimut(P_r\boxtimes P_t)=\left\{\begin{array}{ll}
1; & \mbox{if $r=t=2$}, \\[0.15cm]
2; & \mbox{if $t=2$ and $r\ge 3$}, \\[0.15cm]
\infty; & \mbox{otherwise}.
\end{array}\right.$$
\end{theorem}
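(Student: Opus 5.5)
The proof splits into three cases, and each uses a tool already in the paper. For $r=t=2$, the graph $P_2\boxtimes P_2$ is $K_4$. In a complete graph every pair of vertices is adjacent, so every pair is trivially visible with respect to any set. Hence $V(K_4)$ is a total mutual-visibility set and $\chimut=1$.

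For $t\ge 3$ (so $r\ge t\ge 3$) I will use Theorem~\ref{th_chimut-inf}, which requires a vertex that is the center of a convex $P_3$. I take $v=(2,2)$ with neighbours $x=(1,2)$ and $y=(3,2)$. First, $x$ and $y$ are not adjacent, because their first coordinates differ by $2$. Second, in the strong product $d((a,b),(c,d))=\max\{|a-c|,|b-d|\}$, so $d(x,y)=2$. Third, the common neighbours of $x$ and $y$ are exactly the vertices $(2,j)$ with $|j-2|\le 1$, namely $(2,1)$, $(2,2)$ and $(2,3)$. So there are three geodesics, and $xvy$ is not convex.

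I therefore need a genuinely convex $P_3$, and I will use a diagonal instead. Take $x=(1,1)$ and $y=(3,3)$ with center $v=(2,2)$. Here $d(x,y)=2$, and any common neighbour $w$ of $x$ and $y$ must satisfy $|w_1-1|\le1$, $|w_1-3|\le1$, and the same for the second coordinate. This forces $w=(2,2)$, so $v$ is the unique internal vertex of every $x,y$-geodesic, and $\{x,v,y\}$ induces a convex $P_3$. Theorem~\ref{th_chimut-inf} then gives $\chimut=\infty$. This is the only place where $t\ge 3$ is needed.

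For $t=2$ and $r\ge 3$ I need a lower bound and a construction. For the lower bound, the graph is not complete, since $(1,1)$ and $(3,1)$ are not adjacent. Any one-class coloring puts $(2,1)$ inside a geodesic $(1,1),(2,1),(3,1)$, and $(2,2)$ inside the other such geodesic, so the class blocks that pair. Hence $\chimut\ge 2$.

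For the upper bound I take the two rows $A=[r]\times\{1\}$ and $B=[r]\times\{2\}$ as color classes. Let $(a,b)$ and $(c,d)$ be any two vertices with $a\le c$ and $c-a=k\ge 2$. I will build a geodesic whose internal vertices all lie in the row $\{2\}$, which avoids $A$. It goes $(a,b),(a+1,2),\dots,(c-1,2),(c,d)$. Consecutive vertices are adjacent because each step changes the first coordinate by $1$ and the second by at most $1$. The path has length $k=d((a,b),(c,d))$, so it is a geodesic. The symmetric path through row $\{1\}$ avoids $B$.

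Pairs at distance $1$ are adjacent and hence trivially visible. Pairs with $c=a$ are also adjacent, because $t=2$. So both rows are total mutual-visibility sets, and $\chimut=2$. The main obstacle is the convexity check in the case $t\ge 3$, which is settled by the uniqueness of the common neighbour $(2,2)$ of $(1,1)$ and $(3,3)$.
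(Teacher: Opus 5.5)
Your proof is correct and follows the same route as the paper. The cases are: $K_4$ for $r=t=2$; the diagonal convex $P_3$ on $(1,1),(2,2),(3,3)$ together with Theorem~\ref{th_chimut-inf} for $t\ge 3$; and the two rows $[r]\times\{1\}$, $[r]\times\{2\}$ as color classes for $t=2$, $r\ge 3$. You also supply details the paper only asserts, namely that $(2,2)$ is the unique common neighbour of $(1,1)$ and $(3,3)$, and the explicit geodesics through the opposite row. Your abandoned first attempt with $(1,2),(2,2),(3,2)$ is harmless and can simply be deleted.
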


\begin{proof}
If $r=t=2$, then $P_2\boxtimes P_2$ is the graph $K_4$, and so, $\chimut(P_2\boxtimes P_2)=1$ is straightforward. If $t\ge 3$, then clearly $P_r\boxtimes P_t$ has at least one vertex which is the center of a convex $P_3$ (notice that for instance the diagonal vertices $(1,1),(2,2),(3,3)$ of $P_r\boxtimes P_t$ are such). Thus, by Theorem \ref{th_chimut-inf}, we obtain that $\chimut(P_r\boxtimes P_t)=\infty$.

Assume next that $t=2$ and $r\ge 3$. Since $P_r\boxtimes P_t$ is not a complete graph, it is readily observed that the whole vertex set of $P_r\boxtimes P_t$ is not a total mutual-visibility set. Thus, $\chimut(P_r\boxtimes P_t)\ge 2$. On the other hand, consider the two sets $S_1=[r]\times\{1\}$ and $S_2=[r]\times\{2\}$. Due to the structure of the strong product, it follows that any two vertices of $P_r\boxtimes P_t$ are $S_1$-visible, as well as, $S_2$-visible. Thus, $S_1$ and $S_2$ and total mutual-visibility sets, and so, $\chimut(P_r\boxtimes P_t)\le 2$, which leads to the equality for this last case.
\end{proof}

\begin{theorem}
If $r\ge t\ge 2$, then
$$\chimuo(P_r\boxtimes P_t)=\left\{\begin{array}{ll}
1; & \mbox{if $r=t=2$}, \\[0.15cm]
2; & \mbox{if $t=2$ and $r\ge 3$}, \\[0.15cm]
t-1; & \mbox{otherwise}.
\end{array}\right.$$
\end{theorem}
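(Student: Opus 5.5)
For $r=t=2$ the graph is $K_4$, so the value $1$ follows from part (i) of the Proposition characterizing $\chimuo(G)=1$. For $t=2$ and $r\ge 3$, the graph is not complete, so that Proposition gives $\chimuo\ge 2$. The upper bound comes from \eqref{eq:chimu-o-t} together with Theorem~\ref{th:chimut-strong-grid}, which gives $\chimuo(P_r\boxtimes P_2)\le \chimut(P_r\boxtimes P_2)=2$. The main work is the case $r\ge t\ge 3$, which I split into a lower bound via a convex path and an explicit $(t-1)$-coloring.

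\emph{Lower bound.} Let $D=\{(1,1),(2,2),\dots,(t,t)\}$, which exists since $r\ge t$. I claim $D$ induces a convex path $P_t$. In $P_r\boxtimes P_t$ we have $d((i,j),(i',j'))=\max\{|i-i'|,|j-j'|\}$. Between $(a,a)$ and $(b,b)$ with $a<b$, a geodesic has $b-a$ steps, and each step changes every coordinate by at most one. So every step must increase both coordinates by exactly one, which means the geodesic stays inside $D$. By Remark~\ref{lem:convex} and part (ii) of the same Proposition ($\chimuo(P_t)=t-1$), we get $\chimuo(P_r\boxtimes P_t)\ge t-1$.

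\emph{Upper bound.} Let $B=(\{1,r\}\times[t])\cup([r]\times\{1,t\})$ be the boundary. By the result quoted in \eqref{eq:all-mus}, $B$ is an outer mutual-visibility set. For each $j\in\{2,\dots,t-1\}$ let $L_j=\{2,\dots,r-1\}\times\{j\}$ be the interior part of level $j$. The sets $B,L_2,\dots,L_{t-1}$ partition the vertex set into $t-1$ classes, so it remains to show that each $L_j$ is an outer mutual-visibility set.

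\emph{Main step: $L_j$ is an outer mutual-visibility set.} I would exploit the freedom of king-move geodesics together with the fact that $2\le j\le t-1$, so levels $j-1$ and $j+1$ both exist. Take $x=(i,j)\in L_j$ and any $y=(i',j')$.
\begin{itemize}
\item If $j'=j$ and $|i-i'|\ge 2$, use the geodesic that moves diagonally to level $j+1$, travels horizontally along it, and returns diagonally to $y$. It has length $|i-i'|$ and no internal vertex on level $j$.
\item If $j'\ne j$, say $j'>j$, the first step can go from $x$ to level $j+1$ (diagonally or vertically as needed to remain a geodesic). After that, the vertical coordinate can be kept monotone, or at least never below $j+1$, until reaching $y$. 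This uses that a geodesic needs only $\max\{|i-i'|,|j-j'|\}$ steps, so the horizontal and vertical moves can be scheduled independently.
\end{itemize}
In both cases no internal vertex lies in $L_j$. The obstacle I expect is writing this scheduling argument cleanly, in particular when $|j-j'|<|i-i'|$ and $y$ is close to level $j$. There I would let the vertical coordinate go to $j+1$ first and come back only at the final step. Since every pair involving a vertex of $L_j$ (in particular every pair inside $L_j$) is covered, $L_j$ is an outer mutual-visibility set. This gives $\chimuo\le t-1$ and completes the proof.
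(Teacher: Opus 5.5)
Your proof is correct. The small cases and the lower bound agree in substance with the paper. The paper also uses the diagonal $D=\{(1,1),\dots,(t,t)\}$, but it redoes the path argument by hand: each colour class meets $D$ in at most two vertices, and the only possible two-element intersection is $\{(1,1),(t,t)\}$. You instead check convexity of $D$ via $d((i,j),(i',j'))=\max\{|i-i'|,|j-j'|\}$ and then invoke Remark~\ref{lem:convex} together with $\chimuo(P_t)=t-1$. This is the same argument, packaged more modularly.

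The genuine difference is in the upper bound.
\begin{itemize}
\item The paper uses the full-row partition $\{[r]\times\{1,t\},[r]\times\{2\},\dots,[r]\times\{t-1\}\}$ and leaves the outer property as ``readily observed''.
\item You put the whole boundary $B$ into a single class and split only the interior into row segments $L_j$.
\end{itemize}
Your choice lets you outsource the awkward class to \eqref{eq:all-mus}, where $B$ is in fact a total mutual-visibility set. What remains is the detour argument for $L_j$, and that argument works. Write $d=d(x,y)$ and let $\sigma\in\{-1,0,1\}$ be the sign of $i'-i$. Then the first step $(i,j)\to(i+\sigma,j+1)$ leaves distance exactly $d-1$ to $y$. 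A continuation with nondecreasing second coordinate never returns to level $j$. So the ``obstacle'' you anticipated does not actually arise.

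Your detour argument never uses $i\notin\{1,r\}$, so it equally shows that the paper's full middle rows are outer mutual-visibility sets. The paper's route avoids the citation for $B$. In exchange it needs its own check that $[r]\times\{1,t\}$ is outer, and that check is the same move: clamp the vertical coordinate into $[2,t-1]$ at internal vertices.
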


\begin{proof}
If $r=t=2$, then the conclusion is straightforward since $P_2\boxtimes P_2$ is the graph $K_4$. If $t=2$ and $r\ge 3$, then the conclusion follows from \eqref{eq:chimu-o-t}, \eqref{eq:chimu-strong-grid} and Theorem \ref{th:chimut-strong-grid}. Hence, assume $r\ge t\ge 3$.

Let $P=\{P_1\dots,P_k\}$ be an outer mutual-visibility $k$-coloring of $P_r\boxtimes P_t$ with $k=\chimuo(P_r\boxtimes P_t)$. Consider the set of vertices $D=\{(1,1),(2,2),\dots,(t,t)\}$ which induce a convex $P_t$ in $P_r\boxtimes P_t$. Now, for each $i\in [k]$, let $D_i=D\cap P_i$. Notice first that $|D_i|\le 2$ for every $i\in [k]$. That is, if $|D_\ell|>2$ for some $\ell\in [k]$, then there will be two vertices in $P_\ell$ which are not $P_\ell$-visible, since the vertices in $D$ induce a convex $P_t$. Assume next there is a $P_j\in P$ for which $|D_j|=2$ and let $D_j=\{(i,i),(i',i')\}$ with $i<i'$. If $i'<t$, then the vertices $(i,i)$ and $(t,t)$ are not $P_j$-visible, which is not possible. Similarly, if $i>1$, then the vertices $(1,1)$ and $(i',i')$ are not $P_j$-visible, which is again not possible. Consequently, we deduce that the only suitable possibility for the existence of the set $P_j$ is that $D_j=\{(1,1),(t,t)\}$. Therefore, for every other set $P_{j'}\in P$, with $j'\ne j$, it follows that $|D_{j'}|\le 1$, namely, each vertex of $D$, except possibly $(1,1)$ and $(t,t)$, belongs to different sets in $P$. Having these arguments in mind, and by the pigeon hole principle, it must happen that $\chimuo(P_r\boxtimes P_t)=k\ge t-1$.

On the other hand, let $P'=\{[r]\times \{1,t\},[r]\times \{2\},\dots, [r]\times \{t-1\}\}$ be a partition of $V(P_r\boxtimes P_t)$. It can be readily observed that each set of $P'$ forms an outer mutual-visibility set of $P_r\boxtimes P_t$, and so, $P'$ is an outer mutual-visibility $(t-1)$-coloring of $P_r\boxtimes P_t$. Consequently, we deduce the desired equality $\chimuo(P_r\boxtimes P_t)= t-1$.
\end{proof}

\begin{theorem}
If $r\ge t\ge 2$, then
$$\chimud(P_r\boxtimes P_t)=\left\{\begin{array}{ll}
1; & \mbox{if $r=t=2$}, \\[0.15cm]
2; & \mbox{if $3\le r\le 4$ and $t=3$}, \\[0.15cm]
t; & \mbox{if $2\le t\le 4$ and $(r,t)\notin \{(2,2),(3,3),(4,3)\}$}, \\[0.15cm]
\infty; & \mbox{otherwise}.
\end{array}\right.$$
\end{theorem}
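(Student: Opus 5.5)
The proof splits according to $t$ and $r$.

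\textbf{Easy cases.} For $r=t=2$ the graph is $K_4$, so the value is $1$.

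For $t\ge 5$ we use Theorem \ref{th_chimud-inf}. The diagonal vertices $(1,1),(2,2),\dots,(5,5)$ induce a convex $P_5$, since any geodesic between $(i,i)$ and $(j,j)$ must change both coordinates at every step. Its center $(3,3)$ therefore lies in no dual mutual-visibility set, and the value is $\infty$.

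For $t=2$ and $r\ge 3$: the graph is not complete, which gives the lower bound $2$. The partition into the rows $[r]\times\{1\}$ and $[r]\times\{2\}$ is a total mutual-visibility $2$-coloring (Theorem \ref{th:chimut-strong-grid}), hence also dual. So the value is $2=t$.

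\textbf{The cases $t\in\{3,4\}$, excluding $(3,3)$ and $(4,3)$.} For the upper bound I would color by rows: $[r]\times\{j\}$ for $j\in[t]$. I would check that each row $R=[r]\times\{j\}$ is a dual mutual-visibility set.
\begin{itemize}
\item Two vertices of $R$ are joined by a geodesic along $R$.
\item For $x,y\notin R$, the set of geodesics is rich enough to avoid row $j$. If $x$ and $y$ lie on the same side of row $j$, this is clear. If they lie on opposite sides, their second coordinates differ by at least $2$, so the second coordinate must advance at each step; the first coordinate then has slack, and a geodesic can cross row $j$ only at a vertex, which is forced.
\end{itemize}
This last point looks like a problem: a crossing of row $j$ seems unavoidable, so the row coloring may fail and I should instead use columns or a mixed coloring. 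Indeed, for $t=3$ with $x=(a,1)$ and $y=(b,3)$, every path passes through row $2$. So coloring by rows is wrong for the middle row. The right choice is probably columns $\{i\}\times[t]$ grouped modulo $t$, or diagonal classes, designed so that two vertices outside a class can always route around it. The goal is a partition into $t$ classes, each of which meets every geodesic "layer" in a controlled way. I would first try classes $\{(i,j): i\equiv j \pmod t\}$ and the like, and verify visibility by case analysis on $|i-i'|$ versus $|j-j'|$.

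For the lower bound $\ge t$, I would argue as in the outer case. Take a convex diagonal $P_t$ ($P_3$ or $P_4$) together with its neighbors, and show that no class can contain two of certain $t$ specified vertices. For example, a class containing the center $(2,2)$ of a convex $(1,1),(2,2),(3,3)$ must contain exactly one of $(1,1)$ and $(3,3)$ (the argument in Theorem \ref{th_chimud-inf}). Combining this with the row structure forces $t$ classes.

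\textbf{The small exceptions.} For $(r,t)\in\{(3,3),(4,3)\}$ I would exhibit an explicit $2$-coloring. For $3\times3$, one candidate is the border minus one corner versus the center plus that corner; I would check it directly, the adjustment being needed because the center must pair with exactly one of the ends of each diagonal through it. I would also show that $1$ color fails, since the graph is not complete. The lower bound for larger $r$ comes from a convex $3\times 3$-type argument showing that $2$ colors no longer suffice once $r\ge5$.

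\textbf{Main obstacle.} Finding the correct $t$-colorings, and proving the lower bound $t$ rather than something weaker for $t=3,4$, with $r$ large, is the main difficulty; I expect a careful case analysis there.
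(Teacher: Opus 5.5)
\textbf{There is a genuine gap.} Your cases $r=t=2$, $t=2$ (rows) and $t\ge 5$ (convex diagonal $P_5$) are correct and match the paper. But for $t\in\{3,4\}$, which is the substance of the theorem, the proposal contains no proof, and the concrete suggestions it makes fail.
\begin{itemize}
\item \emph{Your replacement colorings fail.} You rightly discard the row coloring. Diagonal classes $\{(i,j): i-j\equiv k \pmod t\}$ put $(1,1),(2,2),(3,3)$ in one class, although $(2,2)$ is the unique common neighbour of $(1,1)$ and $(3,3)$. With columns grouped modulo $t$, the vertices $(1,1),(3,1)$ lie outside the class of column $2$, while both their common neighbours $(2,1),(2,2)$ lie in it.
\item \emph{The missing idea for the upper bound.} Pair consecutive lines. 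Split the strip $[r]\times\{1,2\}$ (and $[r]\times\{3,4\}$ when $t=4$) in checkerboard fashion: $\{(i,1): i \text{ odd}\}\cup\{(i,2): i\text{ even}\}$ and its complement inside the strip. When $t=3$, take the boundary line $[r]\times\{3\}$ as the third class. Each column meets each strip class exactly once, so two strip vertices in columns at distance at least $2$ are joined by zig-zag geodesics whose interior lies entirely in either class. Pairs involving other lines are routed through the boundary line or the other (convex) strip.
\item \emph{The exceptional cases.} Your $3\times 3$ candidate, $\{(1,1),(2,2)\}$ against the rest, is wrong: $(1,3)$ and $(3,1)$ are both outside the first class and their only common neighbour is $(2,2)$. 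The centre must go with one end of \emph{each} diagonal, as in the paper's $\{(1,1),(1,3),(2,2)\}$. For $(4,3)$ you give no coloring at all; the paper uses $\{(1,1),(1,2),(1,3),(2,1),(2,3),(3,2)\}$ and its complement.
\end{itemize}

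The lower bounds are only gestured at (``combining this with the row structure forces $t$ classes'').
\begin{itemize}
\item \emph{Case $t=3$.} The route you indicate cannot work. Visibility between vertices of a convex subgraph is computed inside it (proof of Remark~\ref{lem:convex}), and $P_3\boxtimes P_3$ and $P_4\boxtimes P_3$ admit dual mutual-visibility $2$-colorings. So no contradiction can come from a convex $3\times 3$ or $4\times 3$ window; the argument must reach column $5$. The paper uses that in a dual $2$-coloring the two ends of every convex $P_3$ get different colors. It then runs a case analysis on $D_1=\{(1,1),(2,2),(3,3)\}$ and $D_2=\{(2,1),(3,2),(4,3)\}$, several branches of which close only with vertices of column $5$.
\item \emph{Case $t=4$, key fact.} A dual mutual-visibility set meets a convex path $v_1v_2v_3v_4$ in one of $\emptyset$, $\{v_1\}$, $\{v_4\}$, $\{v_1,v_4\}$, $\{v_1,v_2\}$, $\{v_3,v_4\}$. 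Hence the diagonal and the anti-diagonal of $[4]\times[4]$ each split into two halves lying in distinct classes.
\item \emph{Case $t=4$, conclusion.} With at most three classes, some class $S$ contains a half of each, and then a nearby vertex has no admissible class. For example, if $(1,1),(2,2),(3,2),(4,1)\in S$ and $(3,3)\notin S$:
\begin{itemize}
\item $(2,1)\in S$ blocks the pair $(1,1),(3,2)$;
\item $(2,1)\notin S$ leaves $(2,1),(3,3)\notin S$ with both common neighbours $(2,2),(3,2)$ in $S$.
\end{itemize}
The half $\{(1,4),(2,3)\}$ is handled the same way with $(1,2)$, and the remaining combinations follow by the half-turn symmetry of $[4]\times[4]$.
\end{itemize}
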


\begin{proof}
If $r=t=2$, then the result is directly obtained. If $r=t=3$, then hand-made calculations show that the sets $A=\{(1,1),(1,3),(2,2)\}$ and $B=V(P_3\boxtimes P_3)\setminus A$ form a dual mutual-visibility coloring of $P_3\boxtimes P_3$. Also, we can similarly check that $A=\{(1,1),(1,2),(1,3),(2,1),(2,3),(3,2)\}$ and again $B=V(P_4\boxtimes P_3)\setminus A$ is a dual mutual-visibility coloring of $P_4\boxtimes P_3$. Thus, the equality $\chimud(P_3\boxtimes P_3)=\chimud(P_4\boxtimes P_3)=2$ follows. Assume now that $2\le t\le 4$ with $(r,t)\notin \{(2,2),(3,3),(4,3)\}$. Let $P$ be a partition of $V(P_r\boxtimes P_t)$ given as follows.
\begin{itemize}
  \item If $t=2$, then $P=\{[r]\times\{1\},[r]\times\{2\}\}$.
  \item If $t=3$, then $P=\{P_1,P_2,P_3\}$ where
  \begin{align*}
    P_1 & = \{i\in [r]\,:\,i\equiv 1\pmod 2\}\times\{1\}\cup \{i\in [r]\,:\,i\equiv 0\pmod 2\}\times\{2\}\}\\
    P_2 & = \{i\in [r]\,:\,i\equiv 0\pmod 2\}\times\{2\}\cup \{i\in [r]\,:\,i\equiv 1\pmod 2\}\times\{1\}\};\\
    P_3 & = [r]\times\{3\}.
  \end{align*}
  \item If $t=4$, then $P=\{P_1,P_2,P_3,P_4\}$ where
  \begin{align*}
    P_1 & = \{i\in [r]\,:\,i\equiv 1\pmod 2\}\times\{1\}\cup \{i\in [r]\,:\,i\equiv 0\pmod 2\}\times\{2\}\}\\
    P_2 & = \{i\in [r]\,:\,i\equiv 0\pmod 2\}\times\{2\}\cup \{i\in [r]\,:\,i\equiv 1\pmod 2\}\times\{1\}\};\\
    P_3 & = \{i\in [r]\,:\,i\equiv 1\pmod 2\}\times\{3\}\cup \{i\in [r]\,:\,i\equiv 0\pmod 2\}\times\{4\}\}\\
    P_4 & = \{i\in [r]\,:\,i\equiv 0\pmod 2\}\times\{4\}\cup \{i\in [r]\,:\,i\equiv 1\pmod 2\}\times\{3\}\}.
  \end{align*}
\end{itemize}
In order to exemplify some of the sets described above, a construction for the case $t=4$ is drawn in Figure \ref{fig:strong-grid}.
\begin{figure}
  \centering
\begin{tikzpicture}[
    scale=1.2,
    % Common node base: all shapes filled black with bold outlines
    nodeBase/.style={draw=black, ultra thick, fill=black, inner sep=0pt},
    % Distinct shapes for each partition (all filled black)
    styleP1/.style={nodeBase, circle, minimum size=8pt},
    styleP2/.style={nodeBase, diamond, minimum size=9.5pt},
    styleP3/.style={nodeBase, rectangle, minimum size=7.5pt},
    styleP4/.style={nodeBase, regular polygon, regular polygon sides=3, minimum size=10.5pt},
    % Edge style
    edge/.style={thick, gray!70}
]
  % --- DRAW EDGES ---
  % Strong product graph P_7 \boxtimes P_4
  \foreach \i in {1,...,7} {
    \foreach \j in {1,...,4} {
      % Horizontal edges
      \ifnum\i<7
        \draw[edge] (\i,\j) -- (\i+1,\j);
      \fi
      % Vertical edges
      \ifnum\j<4
        \draw[edge] (\i,\j) -- (\i,\j+1);
      \fi
      % Diagonal UP-RIGHT edges
      \ifnum\i<7
        \ifnum\j<4
          \draw[edge] (\i,\j) -- (\i+1,\j+1);
        \fi
      \fi
      % Diagonal DOWN-RIGHT edges
      \ifnum\i<7
        \ifnum\j>1
          \draw[edge] (\i,\j) -- (\i+1,\j-1);
        \fi
      \fi
    }
  }
  % --- DRAW VERTICES WITH PARTITION SHAPES ---
  \foreach \i in {1,...,7} {
    \foreach \j in {1,...,4} {

      \pgfmathsetmacro{\isOdd}{mod(\i, 2) == 1 ? 1 : 0}

      % Row y=1
      \ifnum\j=1
        \ifnum\isOdd=1
          \node[styleP1] at (\i,\j) {};
        \else
          \node[styleP2] at (\i,\j) {};
        \fi
      \fi

      % Row y=2
      \ifnum\j=2
        \ifnum\isOdd=1
          \node[styleP2] at (\i,\j) {};
        \else
          \node[styleP1] at (\i,\j) {};
        \fi
      \fi

      % Row y=3
      \ifnum\j=3
        \ifnum\isOdd=1
          \node[styleP3] at (\i,\j) {};
        \else
          \node[styleP4] at (\i,\j) {};
        \fi
      \fi

      % Row y=4
      \ifnum\j=4
        \ifnum\isOdd=1
          \node[styleP4] at (\i,\j) {};
        \else
          \node[styleP3] at (\i,\j) {};
        \fi
      \fi
    }
  }
\end{tikzpicture}
  \caption{The graph $P_7\boxtimes P_4$ is drawn and the partition $P=\{P_1,P_2,P_3,P_4\}$ is represented as follows: $P_1$ appears in black circles, $P_2$ appears in black diamonds, $P_3$ appears in black squares, and $P_4$ appears in black triangles.}\label{fig:strong-grid}
\end{figure}
Notice that the sets $P_1$ and $P_2$ (from the case $t=2$), as well as, the set $P_3$ (from the case $t=3$), can be easily checked to be dual mutual-visibility sets of the corresponding graph $P_r\boxtimes P_t$ (based on the structural properties of the strong product of graphs). We next show that $P_1$ is a dual mutual-visibility set of  $P_r\boxtimes P_3$. In this sense, let $(i,j),(i',j')\in V(P_r\boxtimes P_3)$ such that either $(i,j),(i',j')\in P_1$ or $(i,j),(i',j')\notin P_1$.

If $(i,j),(i',j')\in P_1$, then we can (w.l.o.g.) assume that $i<i'$ If $i=i'-1$, then $(i,j),(i',j')$ are adjacent and clearly $P_1$-visible. Assume then $i'-i>1$. Notice that for each $\ell\in [r]$, it holds $P_1\cap \{(\ell,1),(\ell,2)\}$ has exactly one vertex. Hence, we can consider the shortest path $Q=(i,j)(i+1,j_1)\dots(i'-1,j_{j'-j})(i',j')$ such that for every $q\in [j'-j]$ we have $(i+q,j_q)\notin P_1$. Thus, also in this case, $(i,j),(i',j')$ are $P_1$-visible.

Assume now that $(i,j),(i',j')\notin P_1$. If $i=i'$, then assume $j<j'$ and the shortest path $(i,j)(i,j+1)\dots(i,j'-1)(i,j')$ might have at most one vertex $(i,\ell)$ in $P_1$ that can be avoided (it is the case) by using one of its neighbors $(i+1,\ell)$ or $(i-1,\ell)$. Notice that at least one of these two vertices exists, and it is not in $P_1$. Hence, let $i<i'$. If $j,j'\in \{1,2\}$, then a similar shortest path to the previously used $Q$ shows that $(i,j),(i',j')$ are $P_1$-visible. If $j,j'\in \{3,4\}$, then conclusion is readily obtained since $P_1\cap ([r]\times \{3,4\})=\emptyset$. If $j\in \{1,2\}$ and $j'\in \{3,4\}$, then the vertex $(i+1,j+1)\notin P_1$ and every shortest path $Q'$ between $(i+1,j+1)$ and $(i',j')$ does not include any vertex from $P_1$. Thus, the shortest path $Q'$ together with the vertex $(i,j)$ allows to claim that also $(i,j),(i',j')$ are $P_1$-visible in this case. Any other remaining possibility for the vertices $(i,j),(i',j')$ are similar, analogous or symmetrical to the ones described above. Thus, in every case, $(i,j),(i',j')\in P_1$ or $(i,j),(i',j')\notin P_1$, are $P_1$-visible, and so, $P_1$ is a dual mutual-visibility set of  $P_r\boxtimes P_3$ as claimed.

By symmetry to the arguments above, $P_2$ is also a dual mutual-visibility set of  $P_r\boxtimes P_3$. In addition, similar techniques can be used to show also that the sets $P_1,P_2,P_3,P_4$ are dual mutual-visibility sets of $P_r\boxtimes P_4$. Therefore, altogether we conclude that $\chimud(P_r\boxtimes P_t)\le t$ for every $t\in \{2,3,4\}$. Now, in order to prove that $\chimud(P_r\boxtimes P_t)\ge t$, we proceed separately for each value of $t$.

\medskip
\noindent
\emph{Case 1:} $t=2$. The result follows directly from the fact that $P_r\boxtimes P_2$ is not a complete graph for $r\ge 3$, which means its whole vertex set is not a dual mutual-visibility set.

\medskip
\noindent
\emph{Case 2:} $t=3$. Suppose that $\chimud(P_r\boxtimes P_3)=2$ (notice that it cannot be $\chimud(P_r\boxtimes P_3)=1$) and let $\{P_1,P_2\}$ be a dual mutual-visibility $2$-coloring of $P_r\boxtimes P_3$. Since $r\ge 5$, we consider the two diagonal sets $D_1=\{(1,1),(2,2),(3,3)\}$ and $D_2=\{(2,1),(3,2),(4,3)\}$, each inducing a convex $P_3$ in $P_r\boxtimes P_t$. In this sense, for any $i,j\in [2]$, it must happen $D_i\cap P_j\ne \emptyset$. If $D_1\cap P_1=\{(2,2)\}$ (or resp. $D_1\cap P_2=\{(2,2)\}$), then the two vertices $(1,1),(3,3)\notin P_1$ (resp. $(1,1),(3,3)\notin P_2$) are not $P_1$-visible (resp. not $P_2$-visible), which is not possible. Thus, we first w.l.o.g. assume $D_1\cap P_1=\{(1,1),(2,2)\}$, and so, $D_1\cap P_2=\{(3,3)\}$. Similarly, it cannot happen $D_2\cap P_1=\{(3,2)\}$ neither $D_2\cap P_2=\{(3,2)\}$. Thus, either $D_2\cap P_1=\{(2,1),(3,2)\}$, or $D_2\cap P_1=\{(3,2),(4,3)\}$, or $D_2\cap P_1=\{(2,1)\}$, or $D_2\cap P_1=\{(4,3)\}$. We next analyze all four possibilities.
\begin{itemize}
  \item If $D_2\cap P_1=\{(2,1),(3,2)\}$, then the two vertices $(1,1),(3,2)\in P_1$ are not $P_1$-visible, which is not possible.
  \item If $D_2\cap P_1=\{(3,2),(4,3)\}$, then the two vertices $(2,1),(3,3)\notin P_1$ are not $P_1$-visible, which is again not possible.
  \item Assume $D_2\cap P_1=\{(2,1)\}$. Hence, $D_2\cap P_2=\{(3,2),(4,3)\}$. If $(3,1)\in P_1$, then $(1,1),(3,1)\in P_1$ are not $P_1$-visible, which is not possible. Thus, $(3,1)\in P_2$, which means that the three vertical vertices $(3,1),(3,2),(3,3)\in P_2$. Hence, in order to keep the dual mutual-visibility property with respect to $P_2$, every for vertex $(i,j)$ with $i\ge 4$ it holds $(i,j)\in P_2$. However, since $r\ge 5$, we have that $(3,j),(5,j')\in P_2$ are not $P_2$-visible, for any $j,j'\in [3]$. 
  \item Assume $D_2\cap P_1=\{(4,3)\}$. Hence, $D_2\cap P_2=\{(2,1),(3,2)\}$. However, in such case, we immediately obtain that $(2,2),(4,3)\in P_1$ are not $P_2$-visible, which is not possible.
\end{itemize}
It remains now to consider the case $D_1\cap P_1=\{(2,2),(3,3)\}$, which consequently means $D_1\cap P_2=\{(1,1)\}$. Moreover, by similar arguments as above, we also have that either $D_2\cap P_1=\{(2,1),(3,2)\}$, or $D_2\cap P_1=\{(3,2),(4,3)\}$, or $D_2\cap P_1=\{(2,1)\}$, or $D_2\cap P_1=\{(4,3)\}$ and consider all four possibilities.
\begin{itemize}
  \item If $D_2\cap P_1=\{(2,1),(3,2)\}$, then the two vertices $(2,1),(3,3)\in P_1$ are not $P_1$-visible, which is not possible.
  \item If $D_2\cap P_1=\{(3,2),(4,3)\}$, then the two vertices $(2,2),(4,3)\in P_1$ are not $P_1$-visible, which is again not possible.
  \item Assume $D_2\cap P_1=\{(2,1)\}$. Hence, $D_2\cap P_2=\{(3,2),(4,3)\}$. In such situation, the vertices $(1,1),(3,2)\in P_2$ are not $P_1$-visible, a contradiction. 
  \item Assume $D_2\cap P_1=\{(4,3)\}$. Hence, $D_2\cap P_2=\{(2,1),(3,2)\}$. If $(3,1)\in P_2$, then in order to keep the mutual-visibility property for the vertex $(2,1)\in P_2$ with respect to $P_2$, every vertex $(i,j)$ with $i\ge 4$ holds that $(i,j)\in P_1$. However, since $r\ge 5$, the vertices $(3,3)\in P_1$ and $(5,j')\in P_1$ with $j'\in [3]$ are not $P_1$-visible, which is not possible. Thus, it must happen $(3,1)\in P_1$. If $(4,2)\in P_2$, then $(3,1),(4,3)\in P_1$ are not $P_2$-visible, a contradiction. Hence, $(4,2)\in P_1$, which together with the fact that $(3,1)\in P_1$ leads to $(5,3)\in P_2$. But then, $(5,3),(3,2)\in P_2$ are not $P_1$-visible, which is a final contradiction.
\end{itemize}
Therefore, altogether, the arguments above lead to confirm that a dual mutual-visibility $2$-coloring of $P_r\boxtimes P_3$ is not possible if $r\ge 5$, which means that $\chimud(P_r\boxtimes P_3)\ge 3$ in such case, and the desired equality holds.

\medskip
\noindent
\emph{Case 3:} $t=4$. Suppose first that $\chimud(P_r\boxtimes P_4)=2$ and let $\{P_1,P_2\}$ be a dual mutual-visibility $2$-coloring of $P_r\boxtimes P_4$. Consider the diagonal set $D_1=\{(1,1),(2,2),(3,3),(4,4)\}$ that induces a convex $P_4$. First, notice that $|D_1\cap P_1|\le 2$ and $|D_1\cap P_2|\le 2$. Moreover, it cannot happen $D_1\cap P_1\subseteq \{(2,2),(3,3)\}$ neither $D_1\cap P_2\subseteq \{(2,2),(3,3)\}$. For otherwise, if $D_1\cap P_1\subseteq \{(2,2),(3,3)\}$ or $D_1\cap P_2\subseteq \{(2,2),(3,3)\}$, then $(1,1)$ and $(4,4)$ are not $P_1$-visible or not $P_2$-visible, respectively. Thus, w.l.o.g. $D_1\cap P_1 = \{(1,1),(2,2)\}$ and $D_1\cap P_2=\{(3,3),(4,4)\}$. On the other hand, by the symmetry of $P_r\boxtimes P_4$, we may also assume that $D'_1\cap P_1 = \{(1,4),(2,3)\}$ and that $D'_1\cap P_2=\{(3,2),(4,1)\}$, where $D'_1=\{(1,4),(2,3),(3,2),(4,1)\}$ induces another convex $P_4$. Consider now the vertex $(2,1)$. If $(2,1)\in P_1$, then $(1,1),(3,2)\in P_1$ are not $P_1$-visible. On the contrary, if $(2,1)\in P_2$, then $(2,1),(3,3)\notin P_1$ are not $P_1$-visible. Both situations lead to a contradiction. Now, if we assume $D'_1\cap P_1 = \{(3,2),(4,1)\}$ and $D'_1\cap P_2\subseteq \{(1,4),(2,3)\}$, then we obtain similar contradictions as above, by considering the vertex $(1,2)$ instead of $(2,1)$. Therefore, we conclude that $\chimud(P_r\boxtimes P_4)=2$ is not possible.

Suppose next that $\chimud(P_r\boxtimes P_4)=3$ and let $\{P_1,P_2,P_3\}$ be a dual mutual-visibility $3$-coloring of $P_r\boxtimes P_4$. As in the situation above (when we supposed $\chimud(P_r\boxtimes P_4)=2$), we deduce that it cannot happen $D_1\cap P_1\subseteq \{(2,2),(3,3)\}$ neither $D_1\cap P_2\subseteq \{(2,2),(3,3)\}$, neither $D_1\cap P_3\subseteq \{(2,2),(3,3)\}$. Thus, w.l.o.g. $D_1\cap P_1 = \{(1,1),(2,2)\}$ and $D_1\cap P_2=\{(3,3),(4,4)\}$. We again consider the diagonal set $D'_1=\{(1,4),(2,3),(3,2),(4,1)\}$, which induces another convex $P_4$. Hence, for $D'$, if $D'_1\cap P_1 = \{(1,4),(2,3)\}$ and $D'_1\cap P_2=\{(3,2),(4,1)\}$ or viceversa, then we proceed as above, and obtain a contradiction. Thus, $D'\cap P_3\ne\emptyset$, and indeed, either $D'_1\cap P_3 = \{(1,4),(2,3)\}$ or $D'_1\cap P_3=\{(3,2),(4,1)\}$. Assume first $D'_1\cap P_3 = \{(1,4),(2,3)\}$. Hence, either $D'_1\cap P_1 = \{(3,2),(4,1)\}$ or $D'_1\cap P_2 = \{(3,2),(4,1)\}$.

Assume $D'_1\cap P_1 = \{(3,2),(4,1)\}$, and consider the vertex $(2,1)$. If $(2,1)\in P_1$, then the two vertices $(1,1),(3,2)\in P_1$ are not $P_1$-visible, which is not possible. If $(2,1)\notin P_1$, then the two vertices $(2,1),(3,3)\notin P_1$ are not $P_1$-visible, which is again not possible. If $D'_1\cap P_2 = \{(3,2),(4,1)\}$, then we obtain similar contradictions, by considering the vertex $(4,2)$ instead of $(2,1)$. Moreover, if instead of $D'_1\cap P_3 = \{(1,4),(2,3)\}$ (assumed above), we suppose that $D'_1\cap P_3 = \{(3,2),(4,1)\}$, then by using analogous arguments, we obtain similar contradictions that allow to finally show that $\chimud(P_r\boxtimes P_4)=3$ is not possible. Therefore, $\chimud(P_r\boxtimes P_4)\ge 4$, which completes the proof of this case, and the situations $2\le t\le 4$.

\medskip
Finally, we consider that $t\ge 5$. In such a setting, observe that $P_r\boxtimes P_t$ contains five vertices that induce a convex $P_5$ (for example $\{(1,1),(2,2),(3,3),(4,4),(5,5)\}$). Therefore, by Theorem \ref{th_chimud-inf}, we obtain that $\chimud(P_r\boxtimes P_t)=\infty$, which completes our proof.
\end{proof}

\section{Open questions}\label{sec:conclu}

A variety of mutual-visibility chromatic parameters is introduced in this work. As a consequence of the results we obtained, the following problems might be worth to consider in future.
\begin{itemize}
    \item Characterize the graphs $G$ satisfying the equality in each of the bounds from \eqref{eq:chimus-lower-bounds}.
    \item Characterize the class of graphs $G$ for which $\chimud(G)=\infty$.
    \item Find some families of graphs where the computational complexity of the decision problems studied in Section \ref{sec:complex} will be polynomial.
    \item Study the case of higher dimensional Hamming graphs $K_n^d$ for any $n\ge 2$ and $d\ge 2$ (notice we have studied the case $d=2$). In particular, the case of hypercubes ($n=2$ and $d\ge 2$) deserves particular attention.
    \item Study the variety of our work for the strong product of $k\ge 2$ paths, and indeed, for the strong product of graphs in general.
    \item Having in mind the variety of general position problems from \cite{TK-25}, together with the existence of the chromatic version for the general position number (see \cite{CDiSHTT}), it seems to be natural to consider the study of vertex colorings for the remaining general position parameters from the variety \cite{TK-25}.
\end{itemize}

\section*{Acknowledgments}

S. \ Babu acknowledges Kerala State Council for Science,Technology and Environment (KSCSTE) for the financial support. M.\ Jakovac was supported by the Slovenian Research and Innovation Agency (ARIS) under the grants  P1-0297, N1-0285, N1-0431.  D.\ Kuziak and I.\ G.\ Yero have been partially supported by the Spanish Ministry of Science and Innovation through the grant PID2023-146643NB-I00.

%%%%%%%%%%%%%%%%%%%%%%%%%%%%%%%%%%%%%%%%%%%%%%%%


\begin{thebibliography}{99} 

\bibitem{AK} J.~Akiyama, M.~Kano, Path factors of a graph. In: Graphs and Applications. Proceedings of the first Colorado Symposium on Graph Theory (F.~Harary, J.~Maybee, eds.) (1982) 1--21.

\bibitem{AJ} J.A.~Andrews, S.~Jacobson, On a generalization of chromatic number, Congr.\ Num.\ 47 (1985) 33--45.

\bibitem{Axenovich} M.~Axenovich, D.~Liu, A note on the mutual-visibility coloring of hypercubes, arXiv:2411.12124.

\bibitem{Axenovich-pub} M.~Axenovich, D.~Liu, Visibility in hypercubes, Graphs Comb.\ 42 (2026) 38.

\bibitem{BBSL} S.~Babu, B.~Bre\v sar, B.~Samadi, A.~Lakshmanan S, Distance mutual-visibility coloring: relations with (total) domination, exact distance graphs and graph products, Quaestiones Mathematicae. In press. https://doi.org/10.2989/16073606.2026.2717535.

\bibitem{BDiSL} S.~Babu, G.~Di Stefano, A.~Lakshmanan S,
Mutual-visibility coloring of graphs. arXiv:2512.12251 (12 Feb 2026).

\bibitem{BPSY} B.~Bre\v sar, I.~Peterin, B.~Samadi, I.G.~Yero, Independent mutual-visibility coloring and related concepts, Aequationes Math.\ 100 (2026) 48.

\bibitem{BY} B.~Bre\v sar, I.G.~Yero, Lower (total) mutual-visibility in graphs, Appl.\ Math.\ Comput.\ 465 (2024) 128411.

\bibitem{BKT} Cs.~Bujt\'as, S.~Klav\v{z}ar, J.~Tian, Total mutual-visibility in Hamming graphs, Opuscula Math.\ 45 (2025) 63--78.

\bibitem{CDiSHTT} U.~Chandran S.~V., G.~Di Stefano, Haritha~S., E.~J.~Thomas, J.~Tuite,
Colouring a graph with position sets,
Ars Math.\ Contemp.\ In press. \url{https://doi.org/10.26493/1855-3974.3454.a3c}

\bibitem{CKT-survey} U.~Chandran S.~V., S.~Klav\v{z}ar, J.~Tuite,
The general position problem in graph theory: A survey.
arXiv:2501.19385v2 [math.CO] (25 Apr 2025)

\bibitem{CDDH} S.~Cicerone, G.~Di~Stefano, L.~Dro\v{z}\dj{e}k, J.~Hed\v{z}et, S.~Klav\v{z}ar, I.G.~Yero, Variety of mutual-visibility in graphs, Theoret.\ Comput.\ Sci.\ 974 (2023) 114096.

\bibitem{CDK} S.~Cicerone, G.~Di~Stefano, S.~Klav\v{z}ar, On the mutual-visibility in Cartesian products and in triangle-free graphs, Appl.\ Math.\ Comput.\ 438 (2023) 127619.

\bibitem{CDKY} S.~Cicerone, G.~Di~Stefano, S.~Klav\v{z}ar, I.G.~Yero, Mutual-visibility in strong products of graphs via total mutual-visibility, Discrete Appl.\ Math.\ 358 (2024) 136--146.

\bibitem{CDKY2} S.~Cicerone, G.~Di~Stefano, S.~Klav\v{z}ar, I.G.~Yero, Mutual-visibility problems on graphs of diameter two,
European J.\ Combin.\ 120 (2024) 103995.

\bibitem{CCW} L.J.~Cowen, R.H.~Cowen, D.R.~Woodall, Defective colorings of graphs in surfaces: Partitions into subgraphs of bounded valency, J.\ Graph Theory 10 (1986) 187--195.

\bibitem{DiS} G.~Di~Stefano, Mutual visibility in graphs, Appl.\ Math.\ Comput.\ 419 (2022) 126850.

\bibitem{GJ} M.R.~Garey, M.R.~Johnson, Computers and Intractability: A Guide to the Theory of NP-Completeness, Freeman, New York (1979).

\bibitem {HIK} R.~Hammack, W.~Imrich, S.~Klav\v{z}ar, Handbook of product graphs, Second Edition, CRC Press, Boca Raton, FL, 2011.

\bibitem{KKVY} S.~Klav\v zar, D.~Kuziak, J.C. Valenzuela-Tripodoro, I.G.~Yero, Coloring the vertices of a graph with mutual-visibility property, Open Math.\ 23(1)  (2025) 20250193.

\bibitem{KV} D.~Kor\v ze, A.~Vesel, Mutual-visibility sets in Cartesian products of paths and cycles, Results Math.\ 79 (2024) 116.

\bibitem{SRG} D.~Kuziak, M.L.~Puertas, J.A.~Rodr\'iguez-Vel\'azquez, I.G.~Yero, Strong resolving graphs: the realization and the characterization problems, Discrete Appl.\ Math.\ 236 (2018) 270--287.

\bibitem{Lew} A.~Lew, Partition density, star arboricity, and sums of Laplacian eigenvalues of graphs, J.\ Combin.\ Theory Ser.\ B 179 (2026) 71--89.

\bibitem{Kuziak} D.~Kuziak, J.A.~Rodr\'{\i}guez-Vel\'{a}zquez, Total mutual-visibility in graphs with emphasis on lexicographic and Cartesian products, Bull.\ Malays.\ Math.\ Sci.\ Soc.\ 46 (2023) 197.

\bibitem{Liu} C.-H.~Liu, Defective coloring is perfect for minors, Combinatorica 44 (2024) 467--507.

\bibitem{Pete} I. Peterin, Intervals and convex sets in strong product of graphs, Graphs Combin. 29 (2013) 705--714.

\bibitem{RKLT} D.~Roy, S.~Klav\v{z}ar, A.~Lakshmanan S, J.~Tian, Varieties of mutual-visibility and general position in Sierpi\'nski graphs, Discussiones Mathematicae Graph Theory, 46 (2026), 693--710.

\bibitem{st-74} J.G.~Stemple, Geodetic graphs of diameter two, J. Combin. Theory Ser. B, 17 (1974) 266--280.

\bibitem{TK} J.~Tian, S.~Klav\v{z}ar, Graphs with total mutual-visibility number zero and total mutual-visibility in Cartesian products, Discuss.\ Math.\ Graph Theory, 44 (2024) 1277--1291.

\bibitem{TK-25} J.~Tian, S.~Klav\v{z}ar,
Variety of general position problems in graphs, Bull.\ Malays.\ Math.\ Sci.\ Soc.\ 48 (2025) 5.

\bibitem{Yan-24} Z.~Yan, Y. Peng, Bipartite Ramsey numbers of cycles, European J.\ Combin.\ 118 (2024) 103921.

\bibitem{we} D.B.~West, Introduction to Graph Theory (Second Edition), Prentice Hall, USA, 2001.

\bibitem{schafer-1978} 
T.~J.~Schaefer, The complexity of satisfiability problems, Proceedings of the 10th Annual ACM Symposium on Theory of Computing. San Diego, California, (1978) 216--226.

\end{thebibliography}
\end{document}